\newtheorem{theorem}{Theorem}
\newtheorem{definition}{Definition}
\newtheorem{corollary}[theorem]{Corollary}
\newtheorem{lemma}[theorem]{Lemma}
\newtheorem{proposition}[theorem]{Proposition}
\theoremstyle{definition}
\newtheorem{remark}[theorem]{Remark}
\newtheorem{example}[theorem]{Example}
\newtheorem{assumption}[theorem]{Assumption}
\numberwithin{equation}{section}
\numberwithin{theorem}{section}
\numberwithin{definition}{section}
\definecolor{darkgreen}{cmyk}{1,0,1,.2}
\definecolor{m}{rgb}{1,0.1,1}
\long\def\red#1{\textcolor {red}{#1}}
\long\def\blue#1{\textcolor {blue}{#1}}
\DeclareMathOperator{\supp}{supp}   %support
\DeclareMathOperator{\Hom}{Hom}    %Homomorphisms
 \DeclareMathOperator{\tr}{tr}
\DeclareMathOperator{\Ind}{Ind}
\DeclareMathOperator{\const}{const}
\newcommand{\forget}[1]{}
\def  \nuint {\raise10pt\hbox{$\nu$}\kern-6pt\int}
\newcommand\Tr{\operatorname{Tr}}
\def\N{\mathcal N}
\def \L{\mathcal L}
\def \A{\mathcal A}
\def \P{\mathcal P}
\newcommand\G{\mathcal G}
\newcommand\Q{\mathcal Q}
\newcommand\R{\mathcal R}
\newcommand\V{\mathcal V}
\def \L {{\cal L}}
\def \Sp {{\cal S}}
\newcommand\B{\mathcal B}
\newcommand\Bi{\B^\infty}
\def \J{\mathcal J}
\def \Ch {{\rm Ch}}
\def\Id{{\rm Id}}
\newcommand\cyl{\operatorname{cyl}}
\newcommand\ha{\frac12}
\newcommand\D{\mathcal D}
\newcommand\Di{D\kern-6pt/}
\newcommand\cDi{{\mathcal D}\kern-6pt/}
\newcommand\spi{S\kern-6pt/}
\newcommand \cspi{\Sp\kern-6pt/}
\newcommand\CC{\mathbb C}
\def \cal {\mathcal}
\newcommand\NN{\mathbb N}
\newcommand\RR{\mathbb R}
\newcommand\ZZ{\mathbb Z}
\newcommand\pa{\partial}
\newcommand\Ker{\operatorname{Ker}}
\global\let\c@equation=\c@theorem}
\DeclareMathOperator{\btau}{^\mathrm{b} \hspace{0pt}\Tr_c}
\newcommand{\bint}{\sideset{^{\mathrm{b}\!\!\!}}{_{M}}\int}
\date{}
\definecolor{darkgreen}{cmyk}{1,0,1,.2}
\definecolor{m}{rgb}{1,0.1,1}
\long\def\red#1{\textcolor {red}{#1}}
\long\def\blue#1{\textcolor {blue}{#1}}
\title[G-proper manifolds]{Higher genera for proper actions of Lie groups
%on $G$-proper manifolds
%Higher signatures and higher $\widehat{A}$-genera on $G$-proper manifolds
}
\author{Paolo Piazza}
\address{Dipartimento di Matematica, Sapienza Universit\`a di Roma}
\email{piazza@mat.uniroma1.it}
\author{Hessel B. Posthuma}
\address{Korteweg--de Vries Institute for Mathematics, University of Amsterdam }
\email{H.B.Posthuma@uva.nl}
\subjclass[2010]{Primary: 58J20. Secondary: 58J42, 19K56.}
\keywords{Lie groups, proper actions,  group cocycles, Van Est isomorphism, cyclic cohomology,
K-theory,
index classes, higher  indices, higher index formulae, higher signatures, G-homotopy invariance, higher $\widehat{A}$-genera,
positive scalar curvature.}
\begin{document}

\begin{abstract}
Let $G$ be a  Lie group with finitely many connected components and 
let $K$ be  a maximal compact subgroup.
We assume that $G$ satisfies the rapid decay (RD) property and that $G/K$ has a
non-positive sectional curvature.
%with respect to the G-invariant  metric induced by a  AdK-invariant inner product $\left<~,~\right>$ on $\mathfrak{g}$.
As an example, we can take $G$ to be a connected semisimple Lie group. Let $M$ be a $G$-proper manifold with compact quotient $M/G$.
Building on \cite{CM} and \cite{PPT}  we establish  index formulae for the $C^*$-higher indices of a $G$-equivariant 
Dirac-type operator on $M$. We use these formulae to
investigate geometric properties of suitably defined higher genera on $M$. In particular, we establish
 the $G$-homotopy invariance of the higher signatures of a $G$-proper manifold
and  the vanishing of the $\widehat{A}$-genera of a $G$-spin $G$-proper manifold admitting
a $G$-invariant metric of positive scalar curvature.
\end{abstract}

\maketitle

\section{Introduction}\label{sect:intro}
The aim of this paper is introduce certain geometric invariants associated to proper actions of Lie groups, generalizing the (higher) signatures and $\widehat{A}$-genera. 
Let $G$ be a Lie group satisfying the following assumptions:
\begin{itemize}
\item $G$ has finitely many components.
\item Because $|\pi_0(G)|<\infty$, $G$ has a maximal compact subgroup $K$, unique up to conjugation, and we assume that the homogeneous space $G\slash K$ has non-positive sectional curvature with respect the G-invariant  metric induced by a  AdK-invariant inner product $\left<~,~\right>$ on the Lie algebra $\mathfrak{g}$.
\item $G$ satisfies the rapid decay (RD) property.
\end{itemize}

We will explain these last two hypothesis  in the course of the paper; it suffices for now to remark that natural examples of 
groups satisfying our assumptions are given by connected semisimple Lie groups. The homogeneous  space 
$G\slash K$ is a smooth model for $\underline{E}G$, the classifying space for proper actions of $G$, c.f.\ \cite{BCH}: for any smooth proper action
of $G$ on a manifold $M$, there exists a smooth $G$-equivariant classifying map $\psi_M:M\to G\slash K$, unique up to $G$-equivariant homotopy.
Assuming in addition that the action is {\em cocompact}, i.e., that the quotient $M\slash G$ is compact, we can fix a {\em cut-off} function $\chi_M$ for $M$: this is a smooth function $\chi_M\in C^\infty_c (M)$ satisfying
\[
\int_G\chi_M (g^{-1}x)dg=1,\quad\mbox{for all}~x\in M.
\]
For any proper action of $G$ on $M$, we consider $ \Omega^\bullet_{{\rm inv}} (M)$, the complex of $G$-invariant differential forms
on $M$ and its cohomology denoted by $H^\bullet_{\rm inv}(M)$. In the universal case this cohomology can be identified with the $K$-relative Lie algebra cohomology of the Lie algebra $\mathfrak{g}$ of $G$: 
$H^\bullet_{\rm inv}(G\slash K)\cong H^\bullet_{CE}(\mathfrak{g};K)$ where $CE$ stands for Chevalley-Eilenberg. For any $\alpha\in\Omega^\bullet_{\rm inv}(G\slash K)$, consider its pull-back $\psi_M^*\alpha\in \Omega^\bullet_{\rm inv}(M)$.
The higher signature associated to $\alpha$ is the real number
\begin{equation}\label{higher-s}
\sigma (M,\alpha) : = \int_M \chi_M L(M)\wedge \psi_M^* (\alpha),
\end{equation}
where $L(M)$ is the invariant de Rham form representing the $L$-class of $M$. 
The insertion of the cut-off function $\chi_M$, which has compact support, ensures that the integral is well-defined, and it can be shown that it only depends on the class $ [L(M)\wedge \psi_M^* (\alpha)] \in H^\bullet_{\rm inv}(M)$.
The collection 
\begin{equation}\label{higher-s-collection}
\{\sigma (M,\alpha), [\alpha] \in H^\bullet_{{\rm inv}} (G/K)\}\end{equation}
are called the higher signatures of $M$.
Similarly, the higher $\widehat{A}$ genus associated to $M$ and to $[\alpha] \in H^\bullet_{{\rm inv}} (G/K)$
is the real number
\begin{equation}\label{higher-a}
\widehat{A}  (M,\alpha) : = \int_M \chi_M \widehat{A}(M)\wedge \psi_M^* (\alpha)\end{equation}
with $\widehat{A}(M)$ the de Rham class associated to the $\widehat{A}$-differential form for a $G$-invariant metric. The collection 
\begin{equation}\label{higher-a-collection}
\{\widehat{A}  (M,\alpha), \alpha \in H^\bullet_{{\rm inv}} (G/K)\}\end{equation}
are called the higher $\widehat{A}$-genera  of $M$.\\

In this paper we establish the following result:

\begin{theorem}\label{theo:main}
Let $G$ be a Lie group with finitely many connected components satisfying property RD, and such that $G/K$
is of non-positive sectional curvature for a maximal compact subgroup $K$.
Let $M$ be an orientable manifold with a proper, cocompact action of $G$. Then the following holds true:
\begin{itemize}
\item[$(i)$] each higher signatures $\sigma (M,\alpha)$, $\alpha \in H^\bullet_{{\rm inv}} (G/K)$,
is a $G$-homotopy invariant of $M$.\\
\item[$(ii)$] if $M$ admits a $G$-invariant spin structure and a $G$-invariant metric of positive scalar curvature,
then each higher $\widehat{A}$-genus $\widehat{A}  (M,\alpha)$, $\alpha \in H^\bullet_{{\rm inv}} (G/K)$,
vanishes.
\end{itemize}
\end{theorem}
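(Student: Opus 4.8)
The plan is to deduce Theorem~\ref{theo:main} from a \emph{higher index formula} for the $C^*$-higher index of the relevant Dirac-type operator, combined with the homotopy invariance of the index class of the signature operator for part~$(i)$, and with a Lichnerowicz vanishing argument for part~$(ii)$. I would first associate to a $G$-equivariant Dirac-type operator $D$ on $M$ (the signature operator $D_{\mathrm{sign}}$ for $(i)$, the spin Dirac operator $D_{\mathrm{spin}}$ for $(ii)$) its $C^*$-higher index class $\mathrm{Ind}(D)\in K_\bullet\!\big(C^*_r(G,C_0(M))\big)$, using properness and cocompactness of the action and the Mishchenko--Fomenko/Kasparov construction applied to the $G$-invariant elliptic operator; via the canonical identification of $K_\bullet(C^*_r(G,C_0(M)))$ with the $G$-equivariant $K$-homology of $M$, this is the assembled class of the $K$-homology fundamental class twisted by $\psi_M$. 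The hypotheses on $G$ enter next: property RD, together with cocompactness and a choice of cut-off function $\chi_M$, permits the construction, following \cite{CM} and \cite{PPT}, of a Fr\'echet algebra $\mathcal{A}^\infty$ --- a Jolissaint/Schwartz-type completion of the convolution algebra $C^\infty_c(G\ltimes M)$ --- which is dense in $C^*_r(G,C_0(M))$ and closed under holomorphic functional calculus, so that $K_\bullet(\mathcal{A}^\infty)\cong K_\bullet(C^*_r(G,C_0(M)))$ and $\mathrm{Ind}(D)$ lifts canonically to $K_\bullet(\mathcal{A}^\infty)$.

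Next I would manufacture, for each class $[\alpha]\in H^\bullet_{\mathrm{inv}}(G/K)\cong H^\bullet_{CE}(\mathfrak g;K)$, a cyclic cocycle $\tau_\alpha$ on $\mathcal{A}^\infty$: the van Est isomorphism transports $[\alpha]$ to a class in continuous Lie group cohomology $H^\bullet_c(G)$, and the Connes--Moscovici construction, in the form adapted to proper actions in \cite{PPT}, produces from a group cocycle a cyclic cocycle on the crossed-product algebra, with the cut-off function providing the trace-like normalization. Here the non-positive sectional curvature of $G/K$ guarantees that the van Est representative may be chosen with polynomial growth, which --- combined with RD --- is exactly what makes $\tau_\alpha$ continuous on $\mathcal{A}^\infty$ and hence pairable with $K_\bullet(\mathcal{A}^\infty)$. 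The crux of the argument is then the higher index formula
\[
\langle \mathrm{Ind}(D),\tau_\alpha\rangle \;=\; c_\alpha \int_M \chi_M\, \mathrm{AS}(D)\wedge \psi_M^*(\alpha),
\]
where $\mathrm{AS}(D)$ is the Atiyah--Singer integrand of $D$ (namely $L(M)$ for $D_{\mathrm{sign}}$ and $\widehat{A}(M)$ for $D_{\mathrm{spin}}$) and $c_\alpha$ is a nonzero normalizing constant. I would prove this by a Connes--Moscovici-type rescaling/heat-kernel computation: represent the left-hand side through the Connes--Chern character of the index class, express it as a ($b$-)supertrace of heat operators on the Mishchenko--Fomenko Hilbert module, and take the small-time limit, identifying it with the integral of the local index density against $\psi_M^*\alpha$ via the cut-off. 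I expect this to be the main obstacle: one must simultaneously keep the Schwartz kernels inside $\mathcal{A}^\infty$, justify the short-time asymptotics in the presence of the non-compact group $G$, and match the combinatorics of the cyclic cocycle $\tau_\alpha$ with the Chevalley--Eilenberg/van Est representative of $\alpha$; this is precisely where the full strength of RD and of the non-positivity of the curvature is used.

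With the index formula in hand, the two statements follow formally. For $(i)$: the $C^*$-higher index $\mathrm{Ind}(D_{\mathrm{sign}})\in K_\bullet(C^*_r(G,C_0(M)))$ is a $G$-homotopy invariant of $M$ --- the $G$-proper analogue of the Kasparov--Mishchenko--Hilsum--Skandalis theorem --- which one proves by identifying it, up to the canonical isomorphism, with the image under assembly of the $G$-equivariant symmetric signature, a class that is manifestly invariant under $G$-homotopy equivalence. Since the pairing $\langle\,\cdot\,,\tau_\alpha\rangle$ depends only on this $K$-theory class, the higher index formula shows each $\sigma(M,\alpha)$ is a $G$-homotopy invariant. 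For $(ii)$: a $G$-invariant metric of positive scalar curvature makes $D_{\mathrm{spin}}$ invertible on the Mishchenko--Fomenko Hilbert $C^*$-module, by the Lichnerowicz--Weitzenb\"ock formula applied fibrewise, whence $\mathrm{Ind}(D_{\mathrm{spin}})=0$ in $K_\bullet(C^*_r(G,C_0(M)))$; the index formula then forces $\widehat{A}(M,\alpha)=0$ for every $\alpha$. The only delicate point is that this vanishing is obtained at the $C^*$-level, which is legitimate precisely because $\mathcal{A}^\infty$ is holomorphically closed in $C^*_r(G,C_0(M))$.
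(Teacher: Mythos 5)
Your overall architecture matches the paper's: polynomial‑growth van Est representatives (from non‑positive curvature of $G/K$) plus property (RD) give cyclic cocycles that extend to a dense holomorphically closed subalgebra, one pairs the lifted index class with them, computes the pairing by a local index formula, and then quotes homotopy invariance of the signature index class for $(i)$ and a Lichnerowicz vanishing for $(ii)$. The differences are worth recording. First, you place the index class in $K_\bullet(C_0(M)\rtimes_r G)$ and appeal to equivariant $K$-homology, whereas the paper works with the equivariant Roe algebra $C^*(M,E)^G\cong\mathbb{K}(\mathcal{E})$ and an explicit Morita isomorphism to $K_\bullet(C^*_r(G))$ implemented by the partial trace $\Tr_S$ over a global slice; this explicitness is what lets the paper prove the compatibility $\left<\tau^M_\alpha,\Ind(D)\right>=\left<\tau^G_\alpha,\mathcal{M}(\Ind(D))\right>$ by a direct computation with the degenerating family of cut‑off functions $\chi_\epsilon\to\chi_S$. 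Second, and more substantially, you propose to establish the higher index formula by a Connes--Moscovici heat‑kernel/rescaling argument; the paper deliberately does \emph{not} do this — it invokes the Pflaum--Posthuma--Tang index theorem, whose proof rests on the algebraic index theorem, and only needs to check that the $C^*$-pairing can be computed on the Connes--Skandalis representative of $G$-compact support. Your route is in principle viable (and would be needed for boundary problems), but it is the hardest step of your plan and you leave it as a sketch; the paper's route sidesteps exactly this analysis. Third, you sketch proofs of the two $K$-theoretic inputs (equivariant symmetric signature for $(i)$, fibrewise Lichnerowicz for $(ii)$) where the paper simply cites Fukumoto and Guo--Mathai--Wang; that is a matter of taste, not a gap.

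One genuine omission: the theorem places no parity restriction on $\dim M$, but the index formula you state — pairing an idempotent's Chern character against an even cyclic cocycle built from $\alpha$ — is an even‑dimensional statement, and $\alpha$ ranges over all of $H^\bullet_{\rm inv}(G/K)$. The paper handles odd‑dimensional $M$ by suspension: it passes to the $(G\times\ZZ)$-proper manifold $M\times\RR$, pairs with $\beta=\alpha\otimes\Xi(d)\in H^{\rm odd}_{\rm inv}(G/K)\otimes H^1(S^1)$, checks that $\mathcal{S}^\infty_G(M)\hat{\otimes}\mathcal{S}_\ZZ(\RR)$ is still dense and holomorphically closed in $C^*(M\times\RR)^{G\times\ZZ}$ so the extended pairing survives, and integrates out the circle factor to recover $\sigma(M,\alpha)$ (resp.\ $\widehat{A}(M,\alpha)$). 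Without this (or an equivalent odd pairing of $K_1$ with odd cyclic cocycles), your argument only proves the theorem for even‑dimensional $M$ and even‑degree $\alpha$.
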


\noindent
We prove this result by adapting to the $G$-proper context the seminal paper of Connes and Moscovici
on the cyclic cohomological appraoch to the Novikov conjecture for discrete Gromov hyperbolic groups.\\ 
Crucial to this program is the proof of a higher index formula for higher indices associated to elements in $H^{\bullet}_{{\rm diff}} (G)$ and to the index class $\Ind_{C^*_r (G)} (D)\in K_* (C^*_r (G))$
of a $G$-equivariant Dirac operator on $M$, $M$ even dimensional,  acting on the sections of a complex vector bundle $E$. 
Here are the main steps for establishing this result (for this introduction we expunge from the notation the vector bundle $E$):\\
\begin{enumerate}
\item first, we remark that
for any almost connected Lie group $G$ there is a van Est isomorphism $H^\bullet_{{\rm diff}}(G)\simeq H^\bullet_{{\rm inv}} (G/K)\equiv H^\bullet_{{\rm inv}} (\underline{E}G)$;
\item under the assumption of non-positive sectional curvature for $G/K$ we prove that each $\alpha\in H^{\bullet}_{{\rm diff}}(G)$ 
has a representative cocyle of polynomial growth;
\item if $G$ is unimodular then  for each $\alpha\in H^{{\rm even}}_{{\rm diff}}(G)$ we define a cyclic cocycle $\tau^G_\alpha$ for the convolution algebra
$C^\infty_c (G)$ and thus a homomorphism $\left<\tau^G_\alpha,\cdot\right>: K_0 (C^\infty_c (G))\to \CC$;
\item for each  $\alpha\in H^{{\rm even}}_{{\rm diff}}(G)$ we also consider a cyclic cocycle $\tau^M_\alpha$
for the algebra of $G$-equivariant smooth kernels of $G$-compact support $\mathcal{A}_G^c (M)$;
this defines a homomorphism  $\left<\tau^M_\alpha,\cdot\right>: K_0 (\mathcal{A}_G^c (M))\to \CC$;
\item  we show that if in addition $G$ satisfies the RD property, for example, if $G$ is semisimple connected,
  then $\tau^G_\alpha$ extends to $K_0 (C^*_r (G))$
and  $\tau^M_\alpha$ extends to $K_0 (C^*(M)^G)$, with $C^* (M)^G$ denoting the Roe algebra of $M$;
\item if $D$ is a $G$-equivariant Dirac operator we consider its index class $\Ind_{C^*_r (G)} (D)\in K_0 (C^*_r (G))$
and its Morita equivalent index class $\Ind_{C^*(M)^G} (D)\in K_0 (C^*(M)^G)$ and show that 
 $$\left<\tau^G_\alpha,\Ind_{C^*_r (G)} (D)\right> = \left<\tau^M_\alpha,\Ind_{C^*(M)^G} (D)\right>\,;$$
\item we apply the index theorem of Pflaum-Posthuma-Tang \cite{PPT} in order to compute $\left<\tau^M_\alpha,\Ind_{C^*(M)^G} (D)\right>$, thus establishing our higher $C^*$-index formula in the even dimensional
case.
%\item the odd-dimensional case is treated by suspension.
\end{enumerate}

\medskip
\noindent
We remark that item (2) above has in independent interest, and should be compared with the literature on bounded cohomology of Lie groups, c.f. \cite{ho,kimkim}

The geometric applications stated in Theorem \ref{theo:main} are then a direct consequence of the $G$ homotopy
invariance of the signature index class, established by Fukumoto in \cite{fukumoto2} and, for the higher $\widehat{A}$-genera, of  the
vanishing of the index class   $\Ind_{C^*_r (G)} (\eth)\in K_* (C^*_r (G))$ of  the spin Dirac operator $\eth$
of a $G$-spin $G$-proper manifold endowed with a $G$-metric
of positive scalar curvature, established by Guo, Mathai and Wang in \cite{GMW}. 
In the odd dimensional case we argue by suspension.
Notice that for (certain) 2-degree classes $\alpha$, the $G$-proper homotopy invariance of the higher signatures
$\sigma (M,\alpha)$  had been already established 
by Fukumoto.

\medskip
\noindent
{\bf Acknowledgements.}
Part of this research was carried out during visits of HP to Sapienza Universit\`a di Roma and of
PP to University of Amsterdam. Financial support for these visits was provided by
{\it Istituto Nazionale di Alta Matematica (INDAM)}, through the {\it Gruppo Nazionale per le Strutture Algebriche e Geometriche e loro Applicazioni} (GNSAGA), by the {\it Ministero Istruzione Universit\`a Ricerca (MIUR)},
through the project {\it Spazi di Moduli e Teoria di Lie}, and by NWO TOP  grant nr. 613.001.302.\\
We  thank Andrea Sambusetti, Filippo Cerocchi, Nigel Higson, Varghese Mathai, Xiang Tang and  Hang Wang for many informative and useful discussions.\\

\section{Preliminaries: Proper actions and cohomology}
\subsection{Proper actions}
In this section we introduce the geometric setting for this paper, and list some basic tools that we will need at several points later on.
Let $G$ be a Lie group with finitely many connected components. Recall that a smooth left action of $G$ on a manifold 
$M$ is called {\em proper} if the associated map 
\[
G\times M\to M\times M,\quad (g,x)\mapsto (x,gx),\quad g\in G,x\in M,
\]
is a proper map. This implies that the stabilizer groups $G_x$ of all points $x\in M$ are compact and that the quotient space $M\slash G$ is Hausdorff. The action is 
said to be {\em cocompact} if the quotient is compact. 

The class of manifolds equipped with a proper action of $G$ can be assembled into a category where the morphisms are given by $G$-equivariant smooth 
maps. It is a basic fact that this category has a final object $\underline{E}G$ meaning that any proper $G$-action on $M$ is classified by a $G$-equivariant map $\psi:M\to \underline{E}G$, unique up to $G$-equivariant homotopy. This $\underline{E}G$ is called the {\em classifying space for proper $G$-actions}, and in fact we can take $\underline{E}G:=G\slash K$, where $K$ is a maximal compact subgroup. Then, by writing $S:=\psi^{-1}(eK)$ 
we see that the $S$ is in fact a global slice: it is a $K$-stable submanifold for which there is a diffeomorphism
\[
G\times_K S\cong M,\quad [g,x]\mapsto gx, \quad g\in G, x\in S.
\]
The existence of such a global slice for proper Lie group actions with finitely many connected components was first proved in \cite{Abels}.
When the action is cocompact, $S$ is compact as well.  Closely related to the global slice is the existence of a {\em cut-off} function: this is a smooth function $\chi\in C^\infty(M)$ satisfying
\[
\int_G\chi(g^{-1}x)dg=1,\quad\mbox{for all}~x\in M.
\]
Here we have chosen, for the rest of the paper, a Haar measure which we normalized so that the volume of the maximal compact subgroup $K\subset G$ is equal to $1$. 
When the action of $G$ is cocompact, we can even choose $\chi$ to have compact support. The cut-off function is constructed as follows from the global slice $S\subset M$: choose a 
smooth function $h\in C^\infty(M)$ which is equal to $1$ on $S$ and zero outside an open neighborhood of $S$ in $M$. Then the function
\[
\chi(x)=\left(\int_G h(g^{-1}x)dg\right)^{-1}h(x),
\]
is a cut-off function for the action of $G$. Choosing a $G$-invariant riemannian metric $g$ on $M$ we can refine this construction as follows: choose the initial function $h$ to have 
support inside the tube of distance $1$ in $M$ around $S$. Then, rescaling by $\epsilon>0$ along the radial coordinate near $S$, we obtain a family of functions $h_\epsilon$ 
satisfying
\[
h_\epsilon(x)=\begin{cases} 1& x\in S\\0&d(x,S)>\epsilon.\end{cases}
\]
Using this as input for the construction of the cut-off function above gives a family of cut-off functions $\chi_\epsilon$ satisfying:
\begin{lemma}
\label{cut-off:family}
The family of cut-off functions $\chi_\epsilon,~\epsilon>0$ satisfies
\[
\lim_{\epsilon\downarrow 0}\chi_\epsilon =\chi_S,
\]
distributionally.
\end{lemma}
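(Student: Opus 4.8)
I would pair $\chi_\epsilon$ with an arbitrary $f\in C^\infty_c(M)$ --- the pairing being integration against $d\vol_M$ --- and establish $\langle\chi_\epsilon,f\rangle\to\langle\chi_S,f\rangle$ by a soft compactness argument rather than a direct computation. Cocompactness forces $S$, hence the closed distance-$1$ tube $\overline{T_1}$ about $S$, to be compact, and every $\chi_\epsilon$ with $\epsilon<1$ is supported in $\overline{T_\epsilon}\subseteq\overline{T_1}$. We may and do take the seed function $h$, and hence each $h_\epsilon$, to be $K$-invariant: averaging over $K$ does not disturb $h_\epsilon\equiv 1$ on $S$, nor $\supp h_\epsilon\subseteq\overline{T_\epsilon}$, nor the radial rescaling (as $K$ acts isometrically fixing $S$ setwise). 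Writing $\chi_\epsilon=N_\epsilon^{-1}h_\epsilon$ with $N_\epsilon(x):=\int_G h_\epsilon(g^{-1}x)\,dg$, the function $N_\epsilon$ is $G$-invariant --- this is precisely what makes $\chi_\epsilon$ a cut-off --- so each $\chi_\epsilon$ is a non-negative $K$-invariant function supported in $\overline{T_1}$.

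First I would prove tightness: $\sup_\epsilon\int_M\chi_\epsilon\,d\vol_M<\infty$. Fix a compact neighbourhood $V$ of $e$ in $G$; then $W:=\{gx: g\in V,\ x\in\overline{T_1}\}$ is compact, so there is $\phi\in C^\infty_c(M)$ with $\phi\ge 0$ and $\phi\equiv 1$ on a neighbourhood of $W$. For $g\in V$ and $x\in\supp\chi_\epsilon\subseteq\overline{T_1}$ one gets $gx\in W$, hence $\phi(gx)=1$, hence $\langle\chi_\epsilon,\phi(g\cdot)\rangle=\int_M\chi_\epsilon\,d\vol_M$, where $\phi(g\cdot)$ denotes $x\mapsto\phi(gx)$. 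On the other hand the cut-off identity $\int_G\chi_\epsilon(g^{-1}x)\,dg\equiv 1$, the $G$-invariance of $d\vol_M$ (substitute $y=gx$), and Fubini give
\[
\int_M\phi\,d\vol_M=\int_G\langle\chi_\epsilon,\phi(g\cdot)\rangle\,dg\ \ge\ \int_V\langle\chi_\epsilon,\phi(g\cdot)\rangle\,dg=\vol(V)\int_M\chi_\epsilon\,d\vol_M,
\]
the inequality because the integrand is non-negative. Hence $\int_M\chi_\epsilon\,d\vol_M\le\vol(V)^{-1}\int_M\phi\,d\vol_M$ for all small $\epsilon$. So $\{\chi_\epsilon\,d\vol_M\}$ is a bounded family of positive Radon measures supported on the fixed compact set $\overline{T_1}$, and by sequential Banach--Alaoglu any sequence $\epsilon_j\downarrow 0$ has a subsequence along which $\chi_{\epsilon_j}\,d\vol_M\rightharpoonup\mu$, a positive measure.

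Next I would identify every such $\mu$. It has three properties. (i) $\supp\mu\subseteq S$: a test function vanishing on a neighbourhood of $S$ pairs to zero with $\chi_\epsilon$ once $\overline{T_\epsilon}$ lies in that neighbourhood. (ii) $\mu$ is $K$-invariant, as a weak-$*$ limit of $K$-invariant measures ($K$ preserves $\overline{T_1}$). (iii) $\int_G\langle\mu,\phi(g\cdot)\rangle\,dg=\int_M\phi\,d\vol_M$ for every $\phi\in C^\infty_c(M)$: this passes to the limit from the displayed identity, because properness of the action confines the $g$-support of $g\mapsto\langle\chi_{\epsilon_j},\phi(g\cdot)\rangle$ to a fixed compact subset of $G$ (determined by $\overline{T_1}$ and $\supp\phi$), where it is bounded by $C\|\phi\|_\infty$, so dominated convergence applies. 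Now the global slice enters: $G\times_K S\cong M$ presents $M$ as the base of the principal $K$-bundle $G\times S\to M$, $(g,s)\mapsto gs$, and --- the Haar measure being normalised by $\vol(K)=1$ --- a positive $K$-invariant measure on $S$ corresponds bijectively, by descending its product with $dg$, to a $G$-invariant measure on $M$. Property (iii) says that this $G$-invariant measure, for our $\mu$, is $d\vol_M$; hence $\mu$ is uniquely determined. This $\mu$ is the ``$\delta$-type'' measure along $S$ obtained by disintegrating $d\vol_M$ transversally to the orbits at $S$ --- equal, under the usual normalisation of the invariant metric, to the Riemannian measure $d\vol_S$ --- and it is what we call $\chi_S$. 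Since every sequence $\epsilon_j\downarrow 0$ has a subsequence converging to this same $\chi_S$, the full family converges: $\lim_{\epsilon\downarrow 0}\chi_\epsilon=\chi_S$ distributionally.

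The delicate parts, I expect, are the tightness bound and the final uniqueness. Tightness works only because $\chi_\epsilon$ and the bump $\phi$ are non-negative, so that dropping all $g\notin V$ in $\int_G\langle\chi_\epsilon,\phi(g\cdot)\rangle\,dg$ only decreases it --- without positivity there is no evident a priori control of $\int_M\chi_\epsilon\,d\vol_M$. The uniqueness of $\mu$ leans on Abels' global-slice theorem, which lets one pass between ``$K$-invariant measure on $S$'' and ``$G$-invariant measure on $M$'' and so pin $\mu$ down from the single averaging identity (if one prefers not to assume $G$ unimodular, a modular factor must be carried through this descent). A more explicit alternative would rescale the normal variable by $\epsilon$, making $h_\epsilon$ independent of $\epsilon$, and analyse $N_\epsilon(\exp_p(\epsilon w))$ as $\epsilon\downarrow 0$ by localising its defining integral to group elements near $K$ and Taylor-expanding the action map in Fermi coordinates; it reaches the same conclusion but needs three changes of variables to be matched and the convergences to be made uniform in the transverse variables, which the compactness route avoids.
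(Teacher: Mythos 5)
Your proof is correct, but it takes a genuinely different route from the paper's. The paper argues directly and pointwise: for $x\in S$ one has $h_\epsilon(g^{-1}x)\to$ the characteristic function of $K$ in the $G$-variable, so $N_\epsilon(x)=\int_G h_\epsilon(g^{-1}x)\,dg\to\vol(K)=1$ by dominated convergence on $G$ and the normalization of Haar measure, while $\chi_\epsilon(x)\to 0$ for $x\notin S$; then a second invocation of dominated convergence is used to pass the limit inside $\int_M\chi_\epsilon f\,d{\rm vol}$ and land on $\int_S f$. You never use the pointwise limit: instead you prove tightness of the measures $\chi_\epsilon\,d{\rm vol}_M$ from the cut-off identity and positivity, extract weak-$*$ limit points supported in the fixed compact tube $\overline{T_1}$, and pin down every limit point by support in $S$, $K$-invariance (after the harmless $K$-averaging of $h$), and the averaged identity $\int_G\langle\mu,\phi(g\cdot)\rangle\,dg=\int_M\phi\,d{\rm vol}_M$, with uniqueness coming from the slice theorem. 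What your route buys is rigor exactly where the paper is lightest: on $M$ there is no integrable dominating function for the family $\chi_\epsilon$ (it is of size $\epsilon^{-q}$, $q=\operatorname{codim}S$, on a tube of volume comparable to $\epsilon^{q}$, and the pointwise limit, the indicator of $S$, integrates to $0$ against $d{\rm vol}_M$), so the paper's second "dominated convergence" is really shorthand for a concentration argument, which your tightness-plus-uniqueness scheme delivers without interchanging limit and integral; the paper's route buys brevity and the explicit pointwise picture. One caveat on your closing identification: what your argument actually characterizes is the unique $K$-invariant measure $\mu$ on $S$ whose product with Haar measure pushes forward to $d{\rm vol}_M$ under $(g,s)\mapsto gs$; equating this with the Riemannian measure $d{\rm vol}_S$ is not automatic, since the slice need not meet the orbits orthogonally and the infinitesimal $\mathfrak{p}$-action need not be isometric along $S$, so in general $\mu$ is $d{\rm vol}_S$ weighted by a positive $K$-invariant density. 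The same point is implicit in the paper's "$\int_S f(x)\,dx$", so you lose nothing relative to the paper, but it is cleaner to state the conclusion with $\chi_S$ defined as this disintegration measure — which is in effect what your proof does.
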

\begin{proof}
We begin by remarking that pointwise
\[
\lim_{\epsilon\downarrow 0}\chi_\epsilon(x)=\begin{cases} 1&\mbox{for}~x\in S\\ 0&\mbox{for} ~x\not\in S.\end{cases}
\]
This is because for fixed $x\in S$ the family $h_\epsilon(g^{-1}x)$ of functions on $G$ converges pointwise to the characteristic function of $K\subset G$ and therefore by dominated convergence we have
\[
\lim_{\epsilon\downarrow 0}\int_Gh_\epsilon(g^{-1}x)dg=\int_G\lim_{\epsilon\downarrow 0}h_\epsilon(g^{-1}x)dg=\int_K dg=1,
\]
by our normalization of the Haar measure on $G$. With this pointwise limit of $\chi_\epsilon(x)$ we have, once again by dominated convergence that
\[
\lim_{\epsilon\downarrow 0}\int_M\chi_\epsilon(x)f(x) dx=\int_M\lim_{\epsilon\downarrow 0}\chi_\epsilon(x)f(x) dx=\int_Sf(x)dx,
\]
for any test function $f\in C^\infty_c(M)$.
\end{proof}

\subsection{Invariant cohomology and the van Est map} 
The main point of this subsection is to define the van Est map associated to a proper action of a Lie group $G$ on $M$, and to reinterpret this map as the pull-back in cohomology along the classifying 
map $\psi_M:M\to G\slash K$.

Let $M$ be a smooth manifold equipped with a smooth proper action of $G$. We define
\[
\Omega^\bullet_{\rm inv}(M):=\{\omega\in\Omega^\bullet(M),~g^*\omega=\omega,~\forall g\in G\},
\]
the vector space of invariant differential forms. The de Rham differential restricts to this space of invariant forms and its cohomology, called the {\em invariant cohomology},
is denoted by $H^\bullet_{\rm inv}(M)$. Taking the invariant cohomology defines a contravariant functor on the category of proper $G$-manifolds with an equivariant map $f:M\to N$ 
acting on cohomology by pull-back of differential forms as usual. It is not difficult to see that the induced map $f^*:H^\bullet_{\rm inv}(N)\to H^\bullet_{\rm inv}(M)$ depends only 
on the $G$-homotopy class it is in. Given the choice of a cut-off function $\chi$, it is shown in \cite{PPT} that the integral 
\[
\int_M\chi\alpha
\]
of a closed form $\alpha\in\Omega^{\dim(M)}_{\rm inv, cl}(M)$, only depend on the cohomology class $[\alpha]\in H^{\dim(M)}_{\rm inv}(M)$.

For any manifold $M$ equipped with a proper action of $G$, the {\em van Est map} is a map $H^\bullet_{\rm diff}(G)\to H^\bullet_{\rm inv}(M)$, where $H^\bullet_{\rm diff}(G)$ is the 
so-called {\em smooth group cohomology} of $G$. Let us first recall the definition of this smooth group cohomology. For $G$ a Lie group, 
the space of smooth homogeneous group $k$-cochains  is given by 
\[
C^k_{\rm diff}(G):=\{c:G^{\times(k+1)}\to\mathbb{C}~{\rm smooth},~c(gg_0,\ldots,gg_k)=c(g_0,\ldots,g_k),~{\rm for all}~g,g_0,\ldots,g_k\in G\}.
\]
The differential $\delta:C^k_{\rm diff}(G)\to C^{k+1}_{\rm diff}(G)$  is defined as
\begin{equation}
\label{diff-gc}
(\delta c)(g_0,\ldots,g_{k+1}):=\sum_{i=0}^{k+1}(-1)^ic(g_1,\ldots,\hat{g}_i,\ldots,g_{k+1}),
\end{equation}
where the $\hat{}$ means omission from the argument of the function.
The cohomology of the resulting complex is called the smooth group cohomology, written as $H^\bullet_{\rm diff}(G)$. 

With this, the van Est map is constructed as follows: given a smooth group $c\in C^k_{\rm diff}(G)$,
define the differential form
\begin{equation}
\label{dfg}
\omega^\chi_c:=(d_1\cdots d_k f_c)|_\Delta
\end{equation}
where $d_i$ means taking the differential in the i'th variable of the function $f_c\in C^\infty(M^{\times(k+1)})$ defined as 
\begin{equation}
\label{mc}
f_c(x_0,\ldots,x_k):=\int_{G^{\times(k+1)}}\chi(g_0^{-1}x_0)\cdots\chi(g_k^{-1}x_k)c(g_0,\ldots,g_k)d\mu(g_0)\cdots d\mu(g_k).
\end{equation}
\begin{proposition}
The map $c\mapsto \omega^\chi_c$ defines a morphism of complexes 
\[
\Phi^\chi_M:\left(C^\bullet_{\rm diff}(G),\delta\right)\longrightarrow \left(\Omega^\bullet_{\rm inv}(M),d_{dR}\right).
\]
On the level of cohomology, it is independent of the choice of cut-off function $\chi$.
\end{proposition}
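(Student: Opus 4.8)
I would prove the three assertions in sequence: that $\omega^\chi_c$ lies in $\Omega^k_{\rm inv}(M)$, that $\Phi^\chi_M$ intertwines $\delta$ with $d_{dR}$, and that the cohomology class of $\Phi^\chi_M(c)$ does not depend on $\chi$. Linearity of $c\mapsto\omega^\chi_c$ is clear from \eqref{mc}, and the integral there is well defined because the action is proper. For $G$-invariance I would first observe that $f_c$ is invariant under the diagonal $G$-action on $M^{\times(k+1)}$: substituting $g_j$ by $gg_j$ in \eqref{mc} and using left-invariance of the Haar measure together with the homogeneity relation $c(gg_0,\dots,gg_k)=c(g_0,\dots,g_k)$ gives $f_c(gx_0,\dots,gx_k)=f_c(x_0,\dots,x_k)$. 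Since each partial differential $d_i$ commutes with pull-back along the diagonal diffeomorphism of $M^{\times(k+1)}$ induced by $g\in G$, the form $d_1\cdots d_k f_c$ is diagonally invariant, and therefore its restriction to $\Delta\cong M$ belongs to $\Omega^k_{\rm inv}(M)$.

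For the chain-map property the key point is that the cut-off normalization $\int_G\chi(g^{-1}x)dg=1$ turns $c\mapsto f_c$ into a map that carries the group differential $\delta$ to the simplicial ``omit-a-variable'' differential on functions on $M^{\times(\bullet+1)}$: expanding $(\delta c)(g_0,\dots,g_{k+1})=\sum_{i}(-1)^i c(g_0,\dots,\hat g_i,\dots,g_{k+1})$ in \eqref{mc}, the $i$-th summand is independent of $g_i$ and integrating it against $\chi(g_i^{-1}x_i)$ contributes a factor $1$, so that
\[
f_{\delta c}(x_0,\dots,x_{k+1})=\sum_{i=0}^{k+1}(-1)^i f_c(x_0,\dots,\hat x_i,\dots,x_{k+1}).
\]
I would then use $d_{dR}\circ\iota^*=\iota^*\circ(\sum_{i=0}^k d_i)$ for the diagonal inclusion $\iota\colon\Delta\hookrightarrow M^{\times(k+1)}$; because $d_i^2=0$ annihilates $d_1\cdots d_k f_c$ for $i\ge1$, only the $i=0$ term survives and $d_{dR}\omega^\chi_c=(d_0 d_1\cdots d_k f_c)|_\Delta$. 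On the other hand, applying $d_1\cdots d_{k+1}$ to the displayed formula for $f_{\delta c}$ kills every summand with $1\le i\le k+1$ (being constant in $x_i$) and leaves the $i=0$ term, whose restriction to the diagonal of $M^{\times(k+2)}$ is again $(d_0 d_1\cdots d_k f_c)|_\Delta$. Comparing, $\omega^\chi_{\delta c}=d_{dR}\omega^\chi_c$.

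For independence of the cut-off on cohomology I would run a homotopy argument. Since the set of cut-off functions is convex, for two cut-offs $\chi_0,\chi_1$ the function $\widetilde\chi(x,t):=(1-t)\chi_0(x)+t\chi_1(x)$ is a cut-off function for the (still proper) action of $G$ on $M\times[0,1]$ with $G$ trivial on the interval factor. Inspecting \eqref{dfg} and \eqref{mc} for $M\times[0,1]$, one sees that $\iota_s^*\circ\Phi^{\widetilde\chi}_{M\times[0,1]}=\Phi^{\chi_s}_M$ for $s=0,1$, where $\iota_s\colon M\hookrightarrow M\times[0,1]$ is $x\mapsto(x,s)$: on the diagonal of $(M\times[0,1])^{\times(k+1)}$ the one-forms $dt_0,\dots,dt_k$ all coincide with $dt$, so after restriction at most one of them occurs in $(d_1\cdots d_k\widetilde f_c)|_\Delta$, and that term vanishes upon applying $\iota_s^*$, leaving exactly $\omega^{\chi_s}_c$. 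By the previous step $\Phi^{\widetilde\chi}_{M\times[0,1]}$ is a morphism of complexes, hence sends cocycles to closed forms; and $\iota_0,\iota_1$ are $G$-homotopic equivariant maps, so they induce the same map on $H^\bullet_{\rm inv}$. Therefore $[\Phi^{\chi_0}_M(c)]=\iota_0^*[\Phi^{\widetilde\chi}(c)]=\iota_1^*[\Phi^{\widetilde\chi}(c)]=[\Phi^{\chi_1}_M(c)]$ for every cocycle $c$.

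All three steps are essentially formal; the point needing the most care is the calculus of the partial de Rham differentials $d_i$ on products and its compatibility with restriction to diagonals — in particular checking that $d_{dR}\circ(\,\cdot\,)|_\Delta=(d_0 d_1\cdots d_k(\,\cdot\,))|_\Delta$ holds with no spurious sign (given the ordering convention in \eqref{dfg}) and that the $dt$-bookkeeping in the homotopy step is correct. Once this is set up, the cut-off normalization does the essential work in both the chain-map identity and the homotopy invariance.
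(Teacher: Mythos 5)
Your proof is correct, but it follows a genuinely different route from the paper. You verify everything by hand: invariance of $f_c$ from left-invariance of the Haar measure and homogeneity of $c$; the chain-map identity from the normalization $\int_G\chi(g^{-1}x)\,dg=1$, which converts $\delta$ into the simplicial differential on functions on $M^{\times(\bullet+1)}$ so that only the $i=0$ term survives the partial differentials $d_1\cdots d_{k+1}$; and independence of $\chi$ via convexity of the set of cut-offs, an interpolating cut-off on $M\times[0,1]$, the bookkeeping that all $dt_j$ restrict to $dt$ on the diagonal and die under $\iota_s^*$, and the $G$-homotopy invariance of $H^\bullet_{\rm inv}$ (which the paper records just before the proposition). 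The paper instead works abstractly: it forms the double complex $C^{p,q}=C^\infty(G^{\times(p+1)},\Omega^q_{\rm inv}(M))$, invokes Crainic's vanishing theorem for proper groupoids to get exact rows, and obtains $\Phi_M$ by a zig-zag; the explicit splitting $s_p$ built from $\chi$ (satisfying $\delta_h s+s\delta_h=\mathrm{id}$) then reproduces formula \eqref{dfg}, and independence of $\chi$ on cohomology is automatic because the zig-zag map is independent of the chosen splitting. Your argument is more elementary and self-contained (it never needs the vanishing theorem), at the cost of the sign/diagonal calculus you rightly flag and a small cosmetic point that $M\times[0,1]$ has boundary (replace it by $M\times\RR$ with a smooth interpolation if you want to stay in the boundaryless setting); the paper's argument buys the embedding into Crainic's framework, which is what later yields the van Est isomorphism $H^\bullet_{\rm diff}(G)\cong H^\bullet_{\rm inv}(G/K)$ used in Remark \ref{van Est}, something your computation alone does not provide.
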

\begin{remark}
Because of this last property, we will often omit the superscript $\chi$ and write $\omega_c$ and $\Phi_M$ when the context only refers to the cohomological 
meaning of the differential form and the van Est map.
\end{remark}
\begin{proof}
We start by giving the abstract cohomological definition of the map $\Phi_M$ following \cite{crainic} using a double complex, after which we show how to obtain the explicit chain 
morphism by constructing a splitting of the rows. The double complex is given as follows. We define
\[
C^{p,q}:=C^\infty(G^{\times (p+1)},\Omega_{\rm inv}^q(M)).
\]
The vertical differential $\delta_v:C^{p,q}\to C^{p,q+1}$ is simply given by the de Rham differential, leaving the $G$-variables untouched. As for the horizontal differential 
$\delta_h:C^{p,q}\to C^{p+1,q}$: this is given by differential computing the smooth groupoid cohomology of the action groupoid $G\times M\rightrightarrows M$ with coefficients in
$\bigwedge^qT^*M$, viewed as a representation of this groupoid. Since the $G$-action is proper, the groupoid  $G\times M\rightrightarrows M$ is proper by definition. Therefore,
the vanishing theorem for the groupoid cohomology of proper Lie groupoids in \cite{crainic} applies, and we see that the rows in this double complex are exact. 
There are obvious inclusions 
$C^\bullet_{\rm diff}(G)\hookrightarrow C^{\bullet,0}$, and $\Omega_{\rm inv}^\bullet(M)\hookrightarrow C^{0,\bullet}$, and now we see that by finding the appropriate splittings we can
 "zig-zag" from one end to the other in the double complex:
\[
\xymatrix{\ldots&\ldots&\ldots&\ldots&\\ \Omega^1(M)\ar[u]^{d}\ar[r]&C^{0,1}\ar[u]^{\delta_v}\ar[r]^{\delta_h}&C^{1,1}\ar[u]^{\delta_h}\ar@/^1pc/@{-->}[l]^{s_{1}}\ar[r]^{\delta_h}&C^{2,1}\ar[u]^{\delta_h}\ar[r]^{\delta_h}&\ldots\\
\Omega^0(M)\ar[u]^{d}\ar[r]&C^{0,0}\ar[u]^{\delta_v}\ar[r]^{\delta_h}&C^{1,0}\ar[u]^{\delta_h}\ar[r]^{\delta_h}&C^{2,0}\ar[u]^{\delta_h}\ar@/^1pc/@{-->}[l]^{s_{2}}\ar[r]^{\delta_h}&\ldots\\
&C^0_{\rm diff}(G)\ar[u]\ar[r]^\delta &C^1_{\rm diff}(G)\ar[u]\ar[r]^\delta&C^2_{\rm diff}(G)\ar[u]\ar[r]^{\delta}&\ldots
}
\]
So it remains to find an appropriate splitting $s_p:C^{p,\bullet}\to C^{p+1,\bullet}$. Given a choice of cut-off function $\chi$, the formula
\[
(s_p\alpha)(g_0,\ldots,g_{p-1}):=\left.d_{x_0}\int_G\chi(g^{-1}x_0)\alpha(g,g_0,\ldots,g_{p-1})\right|_\Delta,\quad\alpha\in C^{p,q},
\]
does the job:  a straightforward computation shows that
\[
\delta_h\circ s+s\circ\delta_h={\rm id}.
\]
With this choice of contraction map, one obtains exactly equation \eqref{dfg} for the invariant differential form associated to a group cochain. The preceeding argument therefore shows that the map $c\mapsto \omega_c$ is indeed 
a morphism of cochain complexes.
\end{proof}
\begin{remark}[The van Est isomorphism] 
\label{van Est}
The main theorem of \cite{crainic} states that if $M$ is cohomologically $n$-connected, the map $\Phi_M$ induces an isomorphism in cohomology in degree $\leq n$ and is injective in degree $n+1$.
In the universal case for the action of $G$ on $G\slash K$, which is contractible, we therefore find an isomorphism $H^\bullet_{\rm diff}(G)\cong H^\bullet_{\rm inv}(G\slash K)$. This is one version of the classical van Est theorem \cite{van-Est}. 
In this case we have, by left translation
\begin{equation}
\label{idl}
\Omega^\bullet_{\rm inv}(G\slash K)\cong\left(\bigwedge^\bullet(\mathfrak{g}\slash\mathfrak{k})^*\right)^K,
\end{equation}
under which the de Rham differential identifies with the Chevalley--Eilenberg differential computing the relative Lie algebra cohomology $H^\bullet_{CE}(\mathfrak{g};K)$. With this, the van Est isomorphism
is written as
\begin{equation}
\label{vanEst}
H^\bullet_{\rm diff}(G)\cong H^\bullet_{CE}(\mathfrak{g};K).
\end{equation}
\end{remark}
\begin{proposition}
Let $f:M\to N$ be an equivariant smooth map between proper $G$-manifolds. Then the following diagram commutes:
\[
\xymatrix{H^\bullet_{\rm diff}(G)\ar[rr]^{\Phi_N}\ar[rrd]_{\Phi_M}&&H^\bullet_{\rm inv}(N)\ar[d]^{f^*}\\&&H^\bullet_{\rm inv}(M)}
\]
\end{proposition}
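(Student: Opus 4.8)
The plan is to prove the identity first at the level of cochain complexes, for a carefully matched pair of cut-off functions on $M$ and $N$, and then to pass to cohomology using the fact — established in the Proposition above — that $\Phi_M$ and $\Phi_N$ are, in cohomology, independent of the cut-off chosen.

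First I would fix a cut-off function $\chi_N$ for $N$ and set $\chi_M:=f^*\chi_N=\chi_N\circ f$. Since $f$ is $G$-equivariant we have $\chi_N(f(g^{-1}x))=\chi_N(g^{-1}f(x))$ for all $g\in G$, $x\in M$, so
\[
\int_G\chi_M(g^{-1}x)\,dg=\int_G\chi_N(g^{-1}f(x))\,dg=1 ,
\]
and hence $\chi_M$ is a cut-off function for $M$. Moreover, substituting $\chi_M=\chi_N\circ f$ into \eqref{mc} shows that the function $f_c$ of \eqref{mc} built on $M^{\times(k+1)}$ from $\chi_M$ is the pull-back along the product map $F:=f^{\times(k+1)}\colon M^{\times(k+1)}\to N^{\times(k+1)}$ of the corresponding function built on $N^{\times(k+1)}$ from $\chi_N$; in particular there is nothing extra to check about convergence on the $M$-side. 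Denote these two functions by $f_c^M$ and $f_c^N$, so that $f_c^M=F^*f_c^N$.

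Next I would check the cochain-level identity $\Phi^{\chi_M}_M=f^*\circ\Phi^{\chi_N}_N$. Because $F$ is a product of copies of $f$, varying the $i$-th coordinate of $M^{\times(k+1)}$ corresponds under $F$ to varying the $i$-th coordinate of $N^{\times(k+1)}$ (equivalently, on a product both the de Rham differential and $F^*$ factor compatibly through the tensor-product decomposition of forms); hence $F^*$ commutes with each partial differential $d_i$, and therefore $d_1\cdots d_k f_c^M=F^*(d_1\cdots d_k f_c^N)$. Finally $F$ maps the diagonal $\Delta\subset M^{\times(k+1)}$ into the diagonal of $N^{\times(k+1)}$, and under the canonical identifications of these diagonals with $M$ and $N$ it restricts there to $f$; since restriction to the diagonal is pull-back along the diagonal embedding, \eqref{dfg} gives
\[
\omega^{\chi_M}_c=(d_1\cdots d_k f_c^M)\big|_\Delta=f^*\!\left((d_1\cdots d_k f_c^N)\big|_\Delta\right)=f^*\omega^{\chi_N}_c .
\]
Thus $\Phi^{\chi_M}_M=f^*\circ\Phi^{\chi_N}_N$ as morphisms $\bigl(C^\bullet_{\rm diff}(G),\delta\bigr)\to\bigl(\Omega^\bullet_{\rm inv}(M),d_{dR}\bigr)$. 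Passing to cohomology and using that $\Phi^{\chi_M}_M$ and $\Phi^{\chi_N}_N$ induce, respectively, $\Phi_M$ and $\Phi_N$ irrespective of the cut-off, we obtain $\Phi_M=f^*\circ\Phi_N$ on $H^\bullet_{\rm diff}(G)$, which is the asserted commutativity.

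The only genuine computation here is the middle step — verifying that pull-back along the product map $F$ intertwines the variable-wise differentials $d_i$ and is compatible with restriction to the diagonal — and this is routine multilinear bookkeeping. The observation that makes the whole argument go through painlessly is the elementary fact that the pull-back of a cut-off function along a $G$-equivariant map is again a cut-off function, so that the explicit cochain formula \eqref{dfg}--\eqref{mc} is manifestly natural once the cut-offs are matched. Alternatively, one could avoid explicit cut-offs altogether by noting that $f$ induces a morphism of the double complexes $C^{p,q}$ used to define $\Phi_M$, compatible with the inclusions of $C^\bullet_{\rm diff}(G)$ and of $\Omega^\bullet_{\rm inv}$, and invoking the naturality of the associated zig-zag; the argument above is just the concrete incarnation of this.
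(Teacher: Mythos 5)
Your proof is correct and follows essentially the same route as the paper: pull back a cut-off function along $f$, verify the cochain-level identity $\omega^{f^*\chi}_c=f^*\omega^{\chi}_c$, and conclude from the independence of the van Est map of the choice of cut-off. (You even state the direction correctly --- $f^*\chi_N$ is a cut-off function for $M$ --- whereas the paper's wording inadvertently swaps the roles of $M$ and $N$; your extra verification that $F^*$ commutes with the partial differentials $d_i$ and with restriction to the diagonal is the routine bookkeeping the paper leaves implicit.)
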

\begin{proof}
Let $\chi_M$ be a cutt-off function for the $G$-action on $M$. Then the pull-back $f^*\chi_M$ is a cut-off function for the $G$-action on $N$. For this 
cut-off function we obviously have $\omega^{f^*\chi_M}_c=f^*\omega_c^{\chi_M}$. The result now follows from the fact that the van Est map is independent of the 
choice of cut-off function.
\end{proof}

\begin{corollary}
Under the van Est isomorphism $H^\bullet_{\rm diff}(G)\cong H^\bullet_{\rm inv}(G\slash K)$, the van Est map is identified with the 
pull-back along the classifying map $\psi_M:M\to G\slash K$, i.e., $\Phi_M=\psi_M^*$.
\end{corollary}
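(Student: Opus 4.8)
The plan is to deduce this directly from the preceding Proposition, applied to the classifying map itself. First I would recall that, by the discussion in Section~\ref{sect:intro} and the previous subsection, any proper $G$-action on $M$ admits a $G$-equivariant classifying map $\psi_M:M\to G/K$, unique up to $G$-equivariant homotopy, and that — as already noted — the pull-back $f^*:H^\bullet_{\rm inv}(N)\to H^\bullet_{\rm inv}(M)$ on invariant cohomology depends only on the $G$-homotopy class of $f$. Hence $\psi_M^*:H^\bullet_{\rm inv}(G/K)\to H^\bullet_{\rm inv}(M)$ is a well-defined homomorphism, independent of the particular choice of classifying map.

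Next I would invoke the previous Proposition with $N=G/K$ and $f=\psi_M$. The commuting triangle stated there reads, on the level of cohomology,
\[
\Phi_M=\psi_M^*\circ\Phi_{G/K}\colon\ H^\bullet_{\rm diff}(G)\longrightarrow H^\bullet_{\rm inv}(M).
\]
By Remark~\ref{van Est}, since $G/K$ is contractible it is cohomologically $n$-connected for every $n$, so the main theorem of \cite{crainic} gives that $\Phi_{G/K}:H^\bullet_{\rm diff}(G)\to H^\bullet_{\rm inv}(G/K)$ is an isomorphism in all degrees; this is precisely the van Est isomorphism used to make the identification in the statement.

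Finally, transporting the identity $\Phi_M=\psi_M^*\circ\Phi_{G/K}$ along this isomorphism — that is, performing the substitution $H^\bullet_{\rm diff}(G)\cong H^\bullet_{\rm inv}(G/K)$ under which $\Phi_{G/K}$ becomes the identity — yields $\Phi_M=\psi_M^*$, which is the assertion. I do not anticipate any genuine obstacle: all the substantive content is already contained in the preceding Proposition and in Remark~\ref{van Est}; the only point that requires a word is the well-definedness of $\psi_M^*$ addressed in the first paragraph, which is immediate from the $G$-homotopy invariance of pull-back on invariant cohomology.
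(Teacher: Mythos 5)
Your proof is correct and follows exactly the route the paper intends: the paper states this corollary without proof as an immediate consequence of the preceding Proposition (applied with $N=G/K$, $f=\psi_M$) together with the van Est isomorphism of Remark~\ref{van Est}, which is precisely what you have written out. The only additions you make — the well-definedness of $\psi_M^*$ via $G$-homotopy invariance of pull-back on invariant cohomology — are correct and already noted earlier in the paper.
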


\subsection{Group cocycles of polynomial growth}
\label{egc}
In a later stage of the paper, in the discussion of the extension properties of cyclic cocycles associated to smooth group cocycles, it will be  important to control the growth
of the group cocycles. To this end, we shall prove below a criterium guaranteeing that we can represent classes in $H^\bullet_{\rm diff}(G)$ by cocycles that have at most polynomial growth. For this, we begin by recalling Dupont's inverse
\cite{Dupont-Simplicial} of the van Est map $\Phi_{G\slash K}$ establishing the isomorphism \eqref{vanEst}.
Choose an ${\rm Ad}_K$-invariant inner product $\left<~,~\right>$ on $\mathfrak{g}$, which, by left translations, induces a $G$-invariant Riemannian metric on $G\slash K$.
This metric defines an orthogonal decomposition $\mathfrak{g}=\mathfrak{p}\oplus\mathfrak{k}$ with $\mathfrak{p}\cong T_{eK}(G\slash K)$. Since $K$ is maximal compact, 
the (riemannian) exponential map induces an isomorphism $\mathfrak{p}\cong G\slash K$ (with inverse denoted by $\log$), and we can define the contraction
\[
\varphi_s(x):=\exp(s\log(x)),
\]
of $G\slash K$ to its basepoint $eK\in G\slash K$, i.e., $\varphi_1={\rm id}_{G\slash K}$, and $\varphi_0(x)=eK$. 
Now, given $k+1$ points $g_0K,\ldots,g_kK\in G\slash K$, also denoted 
$\bar{g}_0,\ldots,\bar{g}_k$,
we can  consider the geodesic simplex
$\Delta^k(g_0K,\ldots,g_kK)\subset G\slash K$ defined inductively as the cone of $\Delta^{k-1}(\bar{g}_1,\ldots,\bar{g}_k)$ with tip point $\bar{g}_0$.
More precisely,  define the singular simplex $\sigma^k(\bar{g}_0,\ldots,\bar{g}_k):\Delta^k\to G\slash K$, where $\Delta^k:=\{(t_0,\ldots,t_k)\in\mathbb{R}^{k+1},~t_i\geq 0,\sum_it_i=1\}$, by 
\[
\sigma^k(g_0K,\ldots,g_kK)(t_0,\ldots,t_k):=g_0\varphi_{t_0}\left(\sigma^{k-1}(g_0^{-1}g_1K,\ldots,g_0^{-1}g_k K)\left(\frac{t_1}{1-t_0},\ldots, \frac{t_k}{1-t_0}\right)\right),
\]
and $\sigma^0(gK):=gK$. We write $\Delta^k(g_0K,\ldots,g_kK)$ for the image of this simplex. 
%Remark that since $\varphi_s(gK)$ is simply the geodesic connecting 
%$gK$ with the canonical basepoint $eK$, this simplex has the property that for any interior point $x\in \Delta^k(g_0K,\ldots,g_kK)$ there are $k$ geodesics $\gamma_1,\ldots,\gamma_k$ passing through $x$ whose velocities  
%span $T_x\Delta^k(g_0K,\ldots,g_kK)$. 
By construction, this $k$-simplex is $G$-invariant: $g\Delta^k(\bar{g}_0,\ldots,\bar{g}_k)=\Delta^k(g\bar{g}_0,\ldots,g\bar{g}_k)$. With these simplices we define a map
\begin{equation}
\label{cadf}
J:\Omega^\bullet_{\rm inv}(G\slash K)\longrightarrow C^\bullet_{\rm diff}(G),\quad \alpha\mapsto J(\alpha)(g_0,\ldots,g_k):= \int_{\Delta^k(g_0K,\ldots,g_kK)}\alpha,
\end{equation}
which is easily checked to be a morphism of cochain complexes. Since $\Phi_{G\slash K}\circ J={\rm id}$, $J$ is a quasi-isomorphism.

\begin{theorem}\label{cocycle:pol-est}
Let $G$ be a Lie group with finitely many connected components.
Let $K$ be a maximal
compact subgroup and assume that $G/K$  is of non-positive sectional curvature
with respect to the G-invariant  metric induced 
by a  AdK-invariant inner product $\left<~,~\right>$ on $\mathfrak{g}$.
Then the group cocycle associated to a closed $\alpha\in \Omega^k_{\rm inv}(G\slash K)$ has polynomial growth. More precisely, if we write $d(g)$ for the distance from $eK$ to $gK$ in $G\slash K$, there exists a constant $C>0$ such that the following 
estimate holds true:
\[
|J(\alpha)(g_0,\ldots,g_k)|\leq C(1+d(g_0))^{k}\cdots (1+d(g_k))^{k}.
\]
\end{theorem}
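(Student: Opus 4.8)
The plan is to bound the integral $\int_{\Delta^k(g_0K,\ldots,g_kK)}\alpha$ by controlling the $k$-dimensional Riemannian volume of the geodesic simplex $\Delta^k(\bar g_0,\ldots,\bar g_k)$, since $\alpha$ is a $G$-invariant form and hence has pointwise norm bounded by a constant $C_\alpha=\sup_{x}|\alpha_x|$ (finite because the norm is $G$-invariant and $G$ acts transitively on $G/K$). Thus $|J(\alpha)(g_0,\ldots,g_k)|\le C_\alpha\cdot\vol_k\bigl(\Delta^k(\bar g_0,\ldots,\bar g_k)\bigr)$, and everything reduces to a volume estimate for geodesic simplices in a Hadamard manifold $G/K$, built by iterated geodesic coning.

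The key geometric input is non-positive curvature: in a Hadamard manifold the exponential map at any point is a diffeomorphism that is non-contracting (by Rauch comparison, $\|d\exp_p\|\ge 1$ radially and in the spherical directions as well), so geodesics diverge at least as fast as in Euclidean space. I would proceed by induction on $k$. For the coning step, the simplex $\Delta^k(\bar g_0,\ldots,\bar g_k)$ is swept out by the geodesics from the tip $\bar g_0$ to the points of $\Delta^{k-1}(\bar g_1,\ldots,\bar g_k)$. Parametrizing by $(s,y)$ with $s\in[0,1]$ the fraction along the geodesic from $\bar g_0$ and $y$ ranging over the face, the volume element of the cone at parameter $(s,y)$ is controlled — using the CAT(0) comparison for Jacobi fields along geodesics emanating from $\bar g_0$ — by $\bigl(\text{distance traversed}\bigr)$ times the $(k-1)$-volume element of the face, with a Jacobi-field factor that, by comparison with Euclidean space, is \emph{bounded above} by the linear interpolation factor $s^{k-1}$ times the geodesic length from $\bar g_0$ to the relevant face point. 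Concretely one gets
\[
\vol_k\bigl(\Delta^k(\bar g_0,\ldots,\bar g_k)\bigr)\le \int_0^1 s^{k-1}\,ds\;\cdot\;\Bigl(\sup_{z\in\Delta^{k-1}(\bar g_1,\ldots,\bar g_k)} d(\bar g_0,z)\Bigr)\cdot\vol_{k-1}\bigl(\Delta^{k-1}(\bar g_1,\ldots,\bar g_k)\bigr).
\]
Here one must also check, by induction and the triangle inequality together with the fact that the inductively-constructed face simplex lies in a ball of radius $O(\max_i d(\bar g_0,\bar g_i))$ around, say, $\bar g_1$, that $\sup_{z}d(\bar g_0,z)\le d(\bar g_0,\bar g_1)+\diam(\text{face})\le \text{const}\cdot\sum_{i}(1+d(g_i))$, or more cleanly a product bound. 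Iterating the recursion $k$ times produces a bound of the form $\prod_{i=0}^k f_i$ where each $f_i$ is at most a fixed polynomial in the $d(\bar g_0,\bar g_j)$'s; after using $d(\bar g_0,\bar g_j)\le d(g_0)+d(g_j)$ repeatedly and the elementary inequality $(1+a+b)\le (1+a)(1+b)$, one arrives at the asserted bound $|J(\alpha)(g_0,\ldots,g_k)|\le C\,(1+d(g_0))^k\cdots(1+d(g_k))^k$, the $k$-th powers coming from the fact that each $d(g_i)$ can enter the estimate of each of the $k+1$ nested cone-length factors at most once, and $k$ cone steps occur.

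The main obstacle is making the Jacobi-field/volume comparison for the \emph{iterated} cone rigorous and uniform: each coning step is a geodesic homotopy based at a moving tip, and one needs that the Jacobian of the cone map $(s,y)\mapsto \bar g_0\,\varphi_s(\text{something})$ is controlled \emph{from above} — which is exactly where non-positive curvature is used, since on a Hadamard manifold geodesic cones spread out no faster than under the linear map in the tangent space only after one fixes the right comparison (Günther's inequality bounds volume \emph{below}, so one wants instead the upper bound coming from the fact that $\exp_p$, restricted to the cone over a face, has Jacobian bounded by the Euclidean one times the distortion of $\exp$, which on $\mathrm{CAT}(0)$ spaces is controlled by the \emph{radial distance} — here genuinely using that $\|(d\exp_p)^{-1}\|\le 1$). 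Care is needed because a priori in negative curvature volumes can be \emph{large}; the point is that $\alpha$ being a bounded-norm invariant form means we only need an \emph{upper} bound on volume, and the relevant upper bound is polynomial in the distances precisely because the cone over a bounded face from a point at distance $R$ has volume $\lesssim R\cdot(\text{volume of face})$ — but the face volume itself, inductively, is polynomial in the distances only if one is careful that the face, though possibly "spread out" by negative curvature, is still a cone over a lower face from a nearby tip, so its diameter (hence the length factor at the next stage) stays linearly bounded in the $d(g_i)$'s. I would isolate this as a lemma: for a geodesic simplex in a Hadamard manifold on vertices $\bar g_0,\ldots,\bar g_k$, one has $\diam\bigl(\Delta^k\bigr)\le \sum_{i<j}d(\bar g_i,\bar g_j)$ and $\vol_k\bigl(\Delta^k\bigr)\le \tfrac{1}{k!}\prod_{j=1}^k d(\bar g_0,\bar g_j)\cdot(\text{mild correction})$, proved by the induction above, and then the theorem follows by plugging in and simplifying.
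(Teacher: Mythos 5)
Your reduction to a volume bound for the geodesic simplex is exactly the paper's first step, and your overall strategy — induction on $k$ through the cone structure, with non-positive curvature supplying a linear bound on the relevant Jacobi fields — is the same idea, but you execute the inductive step differently. The paper contracts the whole simplex toward the basepoint via $s\mapsto\Delta^k(eK,\varphi_s(g_1K),\ldots,\varphi_s(g_kK))$, shows the generating vector field $Y$ is a Jacobi field whose norm (by $\tfrac{d^2}{ds^2}\|Y\|^2\ge 0$ and the maximum principle) is maximized at a vertex, where it is the Euler field with linear growth, and then applies the first variation of volume (divergence theorem) plus the mean value theorem, only the face opposite the tip contributing to the boundary term. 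You instead parametrize the cone directly by $(s,y)$ and bound the Jacobian pointwise, integrating $s^{k-1}$ over $[0,1]$. Your route is, if anything, more transparent and avoids the mean value theorem; the paper's avoids having to track the full Jacobian and the Hadamard inequality for wedge products.

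The one step you must repair is the justification of the upper bound on the cone Jacobian. You invoke $\|(d\exp_p)^{-1}\|\le 1$, but that is the wrong-direction inequality: it says $\exp_p$ is volume-\emph{expanding} in non-positive curvature (Rauch/G\"unther), so by itself it can only give a \emph{lower} bound on the cone volume — precisely the danger you identified one sentence earlier. What you actually need is the endpoint-interpolation bound for Jacobi fields: if $J$ is a Jacobi field along the geodesic from $\bar g_0$ to a face point $y$ with $J(0)=0$, then in non-positive curvature $t\mapsto\|J(t)\|$ is convex (wherever $J\neq 0$ one has $\tfrac{d^2}{dt^2}\|J\|\ge 0$ by Cauchy--Schwarz and the curvature sign), hence $\|J(s)\|\le s\,\|J(1)\|$ for $s\in[0,1]$; equivalently, the geodesic contraction toward a point is $s$-Lipschitz between corresponding points, which is the CAT(0) convexity of the distance function. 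Applying this to the $k-1$ Jacobi fields obtained by differentiating the cone map in the face directions, together with Hadamard's inequality for the wedge, gives the factor $s^{k-1}\,d(\bar g_0,y)$ you want; the rest of your recursion (including the observation that $\sup_{z}d(\bar g_0,z)$ over the face is attained at a vertex, by convexity of $d(\bar g_0,\cdot)$ along the iterated cone) then goes through. This is the same curvature input the paper uses, just packaged infinitesimally rather than through the maximum principle for $\|Y\|^2$.
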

\begin{proof}
We denote by $||\alpha||$ the norm of the Lie algebra cocycle 
$\alpha\in C^k_{\rm CE}(\mathfrak{g};K)=\Omega^k_{\rm inv}(G\slash K)$ defined by the $K$-invariant metric on the Lie algebra $\mathfrak{g}$ of $G$ that defines the metric on 
$G\slash K$. Since $\alpha$ is a $G$-invariant differential form we obviously have the inequality
\begin{align*}
|J(\alpha)(g_0,\ldots,g_k)|\leq ||\alpha||{\rm Vol}(\Delta^k(\bar{g}_0,\ldots,\bar{g}_k)).
\end{align*}
We will now prove that, under the assumptions of the Lemma,  the volume of the geodesic $k$-simplex on $G\slash K$ has at most polynomial growth in the geodesic distance of its vertices, thus completing the proof the Lemma.
For this we adapt an argument  from \cite{HK}; we thank Andrea Sambusetti for bringing
this article to our attention.

First remark that as $\varphi_s(gK)$ is the geodesic connecting 
$gK$ to the base point $eK$, the simplex $\Delta^k(g_0K,\ldots,g_kK)$  has, by construction, the property that for any interior point $x\in \Delta^k(g_0K,\ldots,g_kK)$ there are $k$ geodesics $\gamma_1(s_1),\ldots,\gamma_k(s_k)$, 
each connecting two points in the boundary $\partial \Delta^k(g_0K,\ldots,g_kK)$ passing through $x$ whose velocities  span $T_x\Delta^k(g_0K,\ldots,g_kK)$.
Consider the one-parameter family of simplices 
\[
s\mapsto \Delta^k(eK,\varphi_s(g_1K)\ldots\varphi_s(g_kK)),~s\in [0,1]
\] 
contracting $\Delta^k(eK,g_1K,\ldots,g_kK)$ to the basepoint $eK$.
This contraction is generated by a vector field $Y$ which has the property that it is a Jacobi field with respect to each geodesics $\gamma_i(s_i)$ mentioned above passing through 
the point $x\in \Delta^k(eK,\varphi_s(g_1K),\ldots\varphi_s(g_kK))$. The Jacobi equation satisfied by $Y$ therefore gives
\begin{align*}
\frac{d^2}{ds^2_i}||Y(s_i)||^2&=2||\nabla_{\partial/\partial s_i} Y||^2-2\left<R\left(Y(s_i),\frac{d\gamma_i(s_i)}{ds_i}\right)\frac{d\gamma_i(s_i)}{ds_i},Y(s_i)\right>\\
&\geq 0,
\end{align*}
since the sectional curvature of $G/K$ is non-positive. The maximum principle gives that $||Y(s_i)||$ attains its maximum at one of the points $s_i=0,1$. Since this holds true for any $i$, we conclude that the 
maximum is attained on $\partial\Delta^k(eK,\ldots,\varphi_s(g_kK))$, and, proceeding inductively on $k$, in one of the vertices $\varphi_s(g_iK),~i=1,\ldots, k$. But on these vertices, 
$s\mapsto \varphi_s(g_iK)$ is simply the geodesic connecting $eK$ with $g_iK$, which is generated by  the Euler vector field $\sum_i X^i\frac{\partial}{\partial X^i}$ on $\mathfrak{p}$ which has polynomial growth of degree $1$ in the geodesic distance. 
Since the exponential map $\exp:\mathfrak{p}\to G\slash K$ is a radial isometry, the same holds true on $G\slash K$, and we can conclude that the vector field $Y$ has polynomial growth of degree $1$.

Remark that the generating vector field $Y$ is tangent to $\Delta^k(eK,g_1K,\ldots,g_kK)$ to all the boundary faces except for $\Delta^{k-1}(g_1K,\ldots,g_kK)\subset \partial \Delta^k(eK,g_1K,\ldots,g_kK))$. The standard variational formula for the volume therefore gives 
\begin{align*}
\frac{d}{ds}\left({\rm Vol}(\Delta^k(eK,\varphi_s(g_1K),\ldots,\varphi_s(g_kK)))\right)&=\int_{\Delta^k(eK,\varphi_s(g_1K),\ldots,\varphi_s(g_kK))}{\rm div}(Y)\, d{\rm vol}_{\Delta^k}\\
&=\int_{\partial \Delta^k(eK,\varphi_s(g_1K),\ldots,\varphi_s(g_kK))}(Y\cdot n) \, d{\rm vol}_{\partial\Delta^k}\\
&=\int_{\Delta^{k-1}(\varphi_s(g_1K),\ldots,\varphi_s(g_kK))}(Y\cdot n)\, d{\rm vol}_{\Delta^{k-1}}\\
&\leq \prod_i (1+d(g_i)){\rm Vol}(\Delta^{k-1}(\varphi_s(g_1K),\ldots,\varphi_s(g_kK)),
\end{align*}
where $n$ denotes the vector field normal to the boundary, and we have used the fact that $Y\cdot n$ is zero on all faces except $\Delta^{k-1}(g_1K,\ldots,g_kK)\subset \partial \Delta^k(eK,g_1K,\ldots,g_kK))$.
We now use induction: for $k=1$, the geodesic simplex $\Delta^1(g_0K,g_1K)$ is simply the geodesic line segment connecting $g_0K$ and $g_1K$, so the estimate
in the Lemma obviously holds true. Assume now that the estimate holds true for all degrees up to $k-1$. Then, by the mean value theorem:
\begin{align*}
{\rm Vol}(\Delta^k(eK,g_1K,\ldots,g_kK))&=\left.\frac{d}{ds}\left({\rm Vol}(\varphi_s(\Delta^k(eK,g_1K,\ldots,g_kK))\right)\right|_{s=s_0},\quad\mbox{for some}~s_0\in[0,1],\\
&\leq C\prod_{i=1}^k(1+d(g_i)).
\end{align*}
The estimate for the general simplex $\Delta^k(g_0K,\ldots,g_kK)$ follows by translation over $g_0$ (which is an isometry) together with the triangle inequality. 
\end{proof}

\begin{example}
\label{ex-gc}
As an example, consider the abelian group $G=\mathbb{R}^2$ with maximal compact group given by the trivial group $\{0\}\subset \mathbb{R}^2$. In this abelian case we have that $H^\bullet_{\rm inv}(\mathbb{R}^2)=\bigwedge^\bullet\mathbb{R}^2$, 
and a generator in degree $2$ is given by the area form $dx\wedge dy$, so that we find 
\begin{equation}
\label{area-cocycle}
J(dx\wedge dy)(x,y,z)={\rm Area}_{\RR^2}(\Delta^2(x,y,z)), 
\end{equation}
which evidently grows polynomially in the norm of $x,y$ and $z$. 
\end{example}

\begin{remark}{\ }
\label{ex-gc-semisimple}
\begin{itemize}
\item[$i)$]
When $G$ is a connected semisimple Lie group, $G\slash K$ is a non-compact symmetric space and has non-positive sectional curvature \cite{Helgason}. Therefore the curvature assumptions in the Lemma
are automatically satisfied in this case. 
In fact, the conjecture in \cite{Dupont} is that for semisimple Lie groups all these cocycles are bounded. 
For recent work on this conjecture, see \cite{ho,kimkim}. In this last reference, different simplices are used, given by the barycentric subdivision of the geodesic ones,
%as introduced in \cite{ls}, 
to prove boundedness of the top dimensional cocycle for general connected semisimple Lie groups.
\item[$ii)$]
In general, the polynomial bounds of the Lemma above are not sharp, as expected from the conjecture mentioned in $i)$. 
For example, when $G=SL(2,\mathbb{R})$, the maximal compact subgroup is given by $K=SO(2)$ so that 
$G\slash K=\mathbb{H}^2$, the hyperbolic $2$-plane. Again, we have $H^2_{\rm inv}(\mathbb{H}^2)=\mathbb{R}$, 
with generator the hyperbolic area form. This leads to a smooth group cocycle given by the same formula as 
\eqref{area-cocycle} above, replacing the Euclidean area by the hyperbolic one, but this time the cocycle is bounded because the area of a hyperbolic triangle does not 
exceed $\pi$, confirming the boundedness in top-degree mentioned in $i)$.
\end{itemize}
\end{remark}

\section{Algebras of invariant kernels}
\label{section:algebras-closed}

\subsection{Smoothing kernels of $G$-compact support.}
Let $M$ as above,  a closed smooth manifold carrying a smooth proper action of a Lie group $G$ with $|\pi_0 (G)|<\infty$
and with compact quotient. 
We choose an invariant complete Riemannian metric, denoted $h$, with associated distance function denoted by $d_{M}(x,y)$ for $x,y\in M$, and volume form $d{\rm vol}(x)$. We fix a left-invariant metric on $G$ and we denote by 
 $d_{G}$ the associated distance function.
\begin{definition}
\label{def-alg-kern}
Consider  a $G$-equivariant smoothing kernel $k\in C^{\infty}(M\times M)$; thus $k$ is an element
in $C^{\infty}(M\times M)^{G\times G}$. We  say that
 $k$ is of $G$-compact support  if the projection of $\supp(k)\subset M\times M$ in $(M\times M)/G$, with $G$ acting diagonally, is compact. 
 \end{definition}

 \noindent
We denote by $\mathcal{A}_{G}^{c}(M)$ the set of $G$-equivariant smoothing kernels of $G$-compact support.
It is well known that $\mathcal{A}_{G}^{c}(M)$ has the structure of a Fr\'echet  algebra with respect to 
the convolution product
$$(k * k^\prime) (x,z)=\int_M k(x,y) k^\prime (y,z) d{\rm vol}(y)$$
%\textcolor{m}{$\bullet$}
It is also well known that each element $k\in \mathcal{A}_{G}^{c}(M)$ defines an equivariant linear operator $S_k :C^\infty_c (M)\to C^\infty_c (M)$, the integral operator associated to the kernel $k$,
and  that $S_{k}\circ S_{k^\prime}=S_{k * k^\prime}$. Moreover, $S_k$ extends to an equivariant bounded operator 
on $L^2 (M)$. 
We have therefore defined a subalgebra of $\mathcal{B}(L^2 (M))$ that we denote as $\mathcal{S}_G^c (M)$; by definition
\begin{equation}\label{sGc}
\mathcal{S}_G^c (M):=\{S_k, k\in  \mathcal{A}_{G}^{c}(M)\}.
\end{equation}

The case in which there is an equivariant vector bundle
$E$ on $M$ is similar, in that we start with $G$-equivariant elements in $C^{\infty}(M\times M, E\boxtimes E^*)$ and then proceed analogously,
defining in this way the Fr\'echet algebra $\mathcal{A}_{G}^{c}(M,E)$ and $\mathcal{S}_G^c (M,E):=
\{S_k, k\in  \mathcal{A}_{G}^{c}(M,E)\}$, a subalgebra of $\mathcal{B}(L^2 (M,E))$.

\medskip
\noindent
{\bf Notation.} Keeping with a well establised abuse of notation, we shall often identify
$\mathcal{A}_{G}^{c}(M,E)$ with $\mathcal{S}_{G}^{c}(M,E)$
thus identifying a smoothing kernel $k$ in $\mathcal{A}_{G}^{c}(M,E)$ with the corresponding
operator $S_k\in \mathcal{S}_{G}^{c}(M,E)$.

\subsection{Holomorphically closed subalgebras}\label{subsect:holo}
Using the remarks at the end of the previous subsection we see that  that $\mathcal{S}_G^c (M,E)$ is in an obvious way a subalgebra of the 
reduced Roe $C^*$-algebra $C^*(M,E)^G$. Recall that $C^*(M,E)^G$ is defined as a the norm closure in $\mathcal{B}(L^2 (M,E))$
of the algebra $C^*_c (M,E)^G$  of $G$-equivariant bounded operators of finite propagation and locally compact. 
In fact, $\mathcal{S}_G^c (M,E)\subset C^*_c (M,E)^G$. The Roe algebra is canonically  isomorphic to $\mathbb{K}(\mathcal{E})$, the $C^*$-algebra of
compact operators
of the Hilbert $C^*_r (G)$-Hilbert module $\mathcal{E}$ obtained by closing the space of compactly supported 
sections of $E$ on $M$, $C^\infty_c (M,E)$, endowed with the  $C^*_r G$-valued inner product
\begin{equation}\label{C*-inner-product}
 (e,e^\prime)_{C^*_r G} (x):= (e,x\cdot e^\prime)_{L^2 (M,E)}, \quad e,e^\prime \in C^\infty_c (M,E)\,,\;x\in G\,.
 \end{equation} See for example \cite{Hochs-Wang-HC} where
the Morita isomorphism
$$K_* (\mathbb{K}(\mathcal{E}))=K_* (C^*(M,E)^G)\xrightarrow{\mathcal{M}} K_* (C^*_r G)$$
is explicitly discussed. We shall come back to this important point in a moment.\\ 
The subalgebra  $\mathcal{S}_G^c (M,E)$ 
is not holomorphically closed in $C^*(M,E)^G$. On the other hand, such a   subalgebra  of $C^*(M,E)^G$
is implicitly constructed in \cite[Section 3.1]{Hochs-Wang-HC}  by making use of the slice theorem. We recall the essential ingredients, following \cite[Section 3.1]{Hochs-Wang-HC} (we also extend the context slightly for future use).

As already remarked in the previous section,  under our assumptions on $G$, there exists a global slice
for the action of $G$ on $M$: thus if  $K$
is a maximal compact subgroup of $G$ there exists a K-invariant compact submanifold $S\subset M$ such that the action map $[g,s]\to gs$, $g\in G$, $s\in S$, defines a $G$-equivariant
diffeomorphism 
$$ G\times_K S \xrightarrow{\alpha} M$$
where $S$ is  compact because the action is cocompact. Corresponding to this diffeomorphism we have an isomorphism $E\cong G\times_K (E|_S) $ and thus 
isomorphisms
$$C^\infty_c (M,E)\cong (C^\infty_c (G)\hat{\otimes}C^\infty (S,E|_{S}))^K\,,\qquad C^\infty (M,E)\cong (C^\infty (G)\hat{\otimes} C^\infty (S,E|_{S}))^K\,.$$ See \cite[Section 3.1]{Hochs-Wang-HC}.
Here we are taking the projective tensor product $\hat{\otimes}_\pi$ of the two Fr\'echet algebras; however, since 
$ C^\infty (S,E|_{S})$ is nuclear, the injective $\hat{\otimes}_{\epsilon}$ and projective $\hat{\otimes}_{\pi}$
tensor products coincide, which is why we do not use a subscript.
Consider now $\Psi^{-\infty}(S, E|_S)$, also a nuclear Fr\'echet algebra, and let 
$$\widetilde{A}_G^c (M,E):= (C^\infty_c (G)\hat{\otimes} \Psi^{-\infty}(S,E|_S))^{K\times K}$$
$\widetilde{A}_G^c (M,E)$ is a Fr\'echet algebra, with product denoted by $*$.
Let $\widetilde{k}\in \widetilde{A}_G^c (M,E)$ and consider the operator $T_{\widetilde{k}}$
on $L^2 (M,E)$ given by
\begin{equation}\label{correspondence-slice}
(T_{\widetilde{k}} e)(gs)=\int_G \int_S g \widetilde{k}(g^{-1}g^\prime,s,s^\prime) g^{\prime\,-1} e (g^\prime s^\prime)ds^\prime dg^\prime\end{equation}
This is a bounded $G$-equivariant operator with smooth $G$-equivariant Schwartz kernel given by
$$\kappa (gs,g^\prime s^\prime)=g \widetilde{k} (g^{-1} g^\prime, s, s^\prime) g^{\prime\,-1}$$
where the $g$ and $g^{\prime,-1}$ on the right hand side are used in order to 
identify fibers on the vector bundle $E$. 
The assignment 
$\widetilde{k}\to  T_{\widetilde{k}} $ is injective and satisfies 
$$T_{\widetilde{k}} \circ T_{\widetilde{k}^\prime} = T_{\widetilde{k}* \widetilde{k}^\prime } \,.$$
Consider  the subalgebra of the bounded operators on  $L^2 (M,E)$
given by
\begin{equation*}
\{T_{\widetilde{k}}, \;\widetilde{k}\in \widetilde{A}_G^c (M,E)\}\,
\end{equation*}
endowed with the Fr\'echet algebra structure induced by the injective homomorphism $\tilde{k}\to
T_{\tilde{k}}$.
It is easy to see that this algebra is precisely equal to the algebra we have considered in
the previous subsection, $\mathcal{S}_G^c (M,E):=\{S_k, k\in \mathcal{A}^c_G (M,E)\}$. Thus, in formulae,
\begin{equation}\label{equalc}
\mathcal{S}_G^c (M,E)=\{T_{\widetilde{k}}, \;\widetilde{k}\in \widetilde{A}_G^c (M,E)\}
%(\equiv 
%\{T_{\widetilde{k}}, \;\widetilde{k}\in C^\infty_c (G)\hat{\otimes} \Psi^{-\infty}(S,E|_S))^{K\times K}\})
\,.
\end{equation}
 Summarizing:
using the slice theorem we have realized $\mathcal{S}^c_G (M,E)$ as a projective tensor product
of convolution operators on $G$ and smoothing operators on $S$. This preliminary result puts us in the position of 
enlarging the algebra $\mathcal{S}^c_G (M,E)$ and obtain a subalgebra dense and holomorphically
closed in $C^* (M,E)^G$.\\ 
 To this end we give the following definition.
\begin{definition}\label{def-ag}
Let $\mathcal{A}(G)$ a set of functions on $G$. We shall say that $\mathcal{A}(G)$
is admissible if the 
 following properties are satisfied:
\begin{enumerate}
\item   $\mathcal{A}(G)$ is a Fr\'echet space  and there are continuous inclusions
$ C^\infty_c (G)\subset \mathcal{A}(G)\subset L^2 (G)$;
\item the action by convolution defines  a continuous injective map
$\mathcal{A}(G) \hookrightarrow  C^*_r (G)$ which makes  $\mathcal{A}(G)$ a  subalgebra of $C^*_r (G)$;
%\item $\mathcal{A}^{{\rm exp}}(G)\subset \mathcal{A}(G)$, where $\mathcal{A}^{{\exp}} (G)$ is the
%algebra of  smooth functions on $G$ of rapid exponential decay
\item  $\mathcal{A}(G)$ is holomorphically closed
in $C^*_r (G)$
%\item  if $f\in \mathcal{A}(G)$ then its action as a convolution operator is bounded on $L^2 (G)$.
\end{enumerate}
\end{definition}

We can then consider
$$\widetilde{A}_G (M,E):= (\mathcal{A}(G)\hat{\otimes} \Psi^{-\infty}(S,E|_S))^{K\times K}$$
a Fr\'echet algebra and for  $\widetilde{k}\in \widetilde{A}_G (M,E)$ the bounded operator $T_{\widetilde{k}}$
on $L^2 (M,E)$ given by
\begin{equation}\label{correspondence-slice}
(T_{\widetilde{k}} e)(gs)=\int_G \int_S g \widetilde{k}(g^{-1}g^\prime,s,s^\prime) g^{\prime\,-1} e (g^\prime s^\prime)ds^\prime dg^\prime\end{equation}
The operator $T_{\widetilde{k}}$ is an integral operator with  $G$-equivariant Schwartz kernel
$\kappa$ given by $\kappa (gs,g^\prime s^\prime)=g \widetilde{k} (g^{-1} g^\prime, s, s^\prime) g^{\prime\,-1}\,.$
Since $\mathcal{A}(G) \hookrightarrow  C^*_r (G)$, with  $\mathcal{A}(G)$ acting by convultion,
we see that $T_{\widetilde{k}}$ is $L^2$-bounded.

\begin{definition}
We define $\mathcal{A}_G (M,E)$ as the subalgebra of the bounded operators on  $L^2 (M,E)$
given by
$$\mathcal{A}_G (M,E) :=\{T_{\widetilde{k}}, \widetilde{k}\in \widetilde{A}_G(M,E)\}\,.$$
We endow  $\mathcal{A}_G (M,E) $ with the structure of Fr\'echet algebra  induced by the injective 
homomorphism $\tilde{k}\to T_{\tilde{k}}$.
\end{definition}

\begin{proposition}
\label{lemma:HC-holoclosed}
Under the assumptions (1)--(3) for $\mathcal{A}(G)$ appearing in Definition
\ref{def-ag}  the following holds:
\begin{itemize}
\item[(i)] We have a continuous  inclusion of Fr\'echet algebras 
\begin{equation}\label{inclusions}
\mathcal{S}^c_G (M,E) \subset    \mathcal{A}_G(M,E) %\subset C^* (M,E)^G.
\end{equation}
\item[(ii)]   $\mathcal{A}_G(M,E)$  is a dense subalgebra of $C^*(M,E)^G$ and it is   holomorphically closed.
\end{itemize}
\end{proposition}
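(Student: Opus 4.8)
The proof has two parts, corresponding to the two assertions, and both rely on the identification \eqref{equalc} together with the structure of $\widetilde{A}_G(M,E)$ as the $K\times K$-invariants of a projective tensor product $\mathcal{A}(G)\hat\otimes\Psi^{-\infty}(S,E|_S)$. For part $(i)$, I would argue that the continuous inclusion $C^\infty_c(G)\subset\mathcal{A}(G)$ of Definition \ref{def-ag}(1) induces, after applying the exact and continuous functor $(-)\hat\otimes\Psi^{-\infty}(S,E|_S)$ and then passing to $K\times K$-invariants (taking invariants under a compact group is a continuous operation on Fréchet spaces, being given by averaging), a continuous inclusion $\widetilde{A}_G^c(M,E)\subset\widetilde{A}_G(M,E)$ of Fréchet algebras. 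Since both $\mathcal{S}^c_G(M,E)$ and $\mathcal{A}_G(M,E)$ are defined as images under the respective injective homomorphisms $\widetilde k\mapsto T_{\widetilde k}$ — and these are compatible, i.e.\ the operator $T_{\widetilde k}$ attached to $\widetilde k\in\widetilde A_G^c(M,E)$ is the same whether computed in the small or the large algebra — the inclusion \eqref{inclusions} follows, and it is continuous because the Fréchet structures on both sides are by definition transported from the $\widetilde A$-side.

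For part $(ii)$, density of $\mathcal{A}_G(M,E)$ in $C^*(M,E)^G$ is immediate from $(i)$ once one recalls that $\mathcal{S}^c_G(M,E)\subset C^*_c(M,E)^G$ is already dense in the Roe algebra $C^*(M,E)^G$ (this was noted in Section \ref{subsect:holo}); since $\mathcal{S}^c_G(M,E)\subset\mathcal{A}_G(M,E)\subset C^*(M,E)^G$, the intermediate algebra is dense as well. The substantive point is holomorphic closedness. Here the strategy is to reduce it, via the slice-theorem identification, to the holomorphic closedness of $\mathcal{A}(G)$ in $C^*_r(G)$, which is Definition \ref{def-ag}(3). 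Concretely, $\Psi^{-\infty}(S,E|_S)$ is a dense holomorphically closed (indeed, it is a "smooth subalgebra" — it is spectrally invariant, essentially because it is the algebra of smoothing operators inside the compacts on $L^2(S,E|_S)$, and one can cite the standard fact that smoothing operators form a spectrally invariant Fréchet subalgebra, cf.\ the usual argument with the parametrix/Neumann series producing kernels that stay smoothing). A theorem of the type "if $A\subset\mathcal{A}$ and $B\subset\mathcal{B}$ are dense holomorphically closed Fréchet subalgebras of $C^*$-algebras, then $A\hat\otimes B$ is holomorphically closed in $\mathcal{A}\otimes_{\min}\mathcal{B}$" then gives that $\mathcal{A}(G)\hat\otimes\Psi^{-\infty}(S,E|_S)$ is holomorphically closed in $C^*_r(G)\otimes\mathbb{K}(L^2(S,E|_S))$. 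Finally, passing to $K\times K$-invariants preserves holomorphic closedness — if $a\in(\,\cdot\,)^{K\times K}$ is invertible in the ambient $C^*$-algebra, its inverse is automatically $K\times K$-invariant by uniqueness of inverses, and it lies in the invariants of the smooth algebra because that algebra is holomorphically closed before taking invariants — and the identification $C^*(M,E)^G\cong\mathbb{K}(\mathcal{E})$ with $(C^*_r(G)\otimes\mathbb{K}(L^2(S,E|_S)))^{K\times K}$ closes the argument.

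The main obstacle is the tensor-product step for holomorphic closedness: it is a genuine (and somewhat delicate) fact that holomorphic closedness is inherited by completed tensor products, and one must be careful about which tensor-product topology is used on the smooth side (the paper's choice of $\hat\otimes$, coinciding $\pi$ and $\epsilon$ by nuclearity of $\Psi^{-\infty}(S,E|_S)$, is what makes this work) versus the minimal $C^*$-tensor product on the $C^*$ side; one invokes here the standard spectral-invariance results for such tensor products (e.g.\ the arguments going back to the folklore around smooth subalgebras, as used in the noncommutative geometry literature). A secondary point requiring care is the compatibility of the two descriptions of $T_{\widetilde k}$ and the verification that the Fréchet topology on $\mathcal{A}_G(M,E)$ transported from $\widetilde A_G(M,E)$ is the one making all the inclusions continuous; this is routine but should be stated. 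I would also remark that one does not need a full functional-calculus closedness, only closedness under inversion, which by a standard lemma is equivalent to holomorphic closedness for Fréchet subalgebras, so it suffices to track inverses throughout.
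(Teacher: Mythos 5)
Your proposal is correct and follows essentially the same route as the paper: part (i) is read off from the identification \eqref{equalc}, density is inherited from the compactly supported subalgebra (the paper defers this to Hochs--Wang), and holomorphic closedness is deduced from hypothesis (3) on $\mathcal{A}(G)$ together with the standard fact that $\Psi^{-\infty}(S,E|_S)$ is holomorphically closed in the compacts on $L^2(S,E|_S)$, via the tensor-product and $K\times K$-invariance steps you describe. The paper's proof is in fact terser than yours and simply asserts these reductions; your explicit flagging of the tensor-product step for spectral invariance is a fair expansion of what the paper leaves implicit.
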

\begin{proof}
(i) The continuous inclusion of Fr\'echet algebras  $\mathcal{S}^c_G (M,E)  \subset   \mathcal{A}_G(M,E)$ follows immediately
from \eqref{equalc}.\\
%we consider $\mathcal{A}^{{\exp}} (G)$ and the algebra 
%$ (\mathcal{A}^{{\exp}} (G)\hat{\otimes} \Psi^{-\infty}(S,E|_S))^{K\times K}$.
%One then proves that $$\{T_{\widetilde{k}}, \widetilde{k}\in (\mathcal{A}^{{\exp}} (G)\hat{\otimes} \Psi^{-\infty}(S,E|_S))^{K\times K})\}$$
%is precisely equal to $\mathcal{A}_G^{{\exp}} (M)$. From the second assumption on $\mathcal{A} (G)$ the claim then follows.\\
(ii) The fact that  $\mathcal{A}_G(M,E)$  is a dense subalgebra of $C^*(M,E)^G$ is proved
precisely as  in \cite[Lemma 3.6]{Hochs-Wang-HC}; the property of being holomorphically closed
 follows easily from the
hypothesis that %discussion in \cite{Hochs-Wang-HC}, using 
 $\mathcal{A}(G)$
%\subset \mathcal{S}(G)$  
is holomorphically closed in $C^*_r G$ and the well known fact that $\Psi^{-\infty}(S,E|_S)$ is holomorphically closed 
in the compact operators of $L^2 (S,E|_S)$.
\end{proof}

\begin{definition}
\label{prop-rd}
Let $G$ be a Lie group and let $L$ be a length function on $G$. We consider 
\begin{equation}\label{RD}
H^\infty_L (G)=\{f\in L^2 (G)\;:\; \int_G (1+L(x))^{2k} |f(x)|^2 dx <+\infty \;\;\forall k\in\mathbb{N}\}
\end{equation}
endowed with the Fr\'echet topology induced by the sequence of seminorms
\begin{equation}
\label{seminorm}
\nu_k(f):=||(1+L)^kf||_{L^2}.
\end{equation}
We shall say that the pair $(G,L)$ satisfies the Rapid Decay property (RD) if there is a continuous
inclusion $H^\infty_L (G)\hookrightarrow   C^*_r (G)$.
\end{definition}

For conditions equivalent to the one given here, see  \cite{Chatterji-et-al-duke}. 
We also recall that if $G$ satisfies (RD) then $G$ is unimodular. See \cite{ji-s-K}.

\begin{proposition}\label{prop:semis}
Let $G$ be a Lie group with $|\pi_0 (G)|<\infty$; we can and we shall choose 
$L$ to be  the length function
associated to a left-invariant Riemannian metric. Assume additionally that $G$ satisfies
(RD) (with respect to this $L$). Then 
\begin{equation}\label{RD}
H^\infty_L (G)=\{f\in L^2 (G)\;:\; \int_G (1+L(x))^{2k} |f(x)|^2 dx <+\infty\}
\end{equation}
 satisfies  the properties (1) (2) (3) given in Definition \ref{def-ag}. 
Consequently, for  $G$ with $|\pi_0 (G)|<\infty$ and with the (RD) property,
there exists a subalgebra of $C^* (M,E)^G$, denoted $\mathcal{S}_{G}^\infty (M,E)$, which consists of integral operators, is dense and  holomorphically
closed in $C^*(M,E)^G$ and contains $\mathcal{S}_G^c (M,E)$ as a subalgebra.
\end{proposition}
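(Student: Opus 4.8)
The plan is to verify the three conditions in Definition \ref{def-ag} for $\mathcal{A}(G) = H^\infty_L(G)$, and then feed this into Proposition \ref{lemma:HC-holoclosed}.

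\textit{Proof of Proposition \ref{prop:semis} (sketch).}
We must check that $H^\infty_L(G)$ is admissible in the sense of Definition \ref{def-ag}. For property (1): by construction $H^\infty_L(G)$ is a Fr\'echet space with the topology given by the seminorms $\nu_k$ of \eqref{seminorm}, and the inclusion $H^\infty_L(G)\subset L^2(G)$ is continuous since $\nu_0 = \|\cdot\|_{L^2}$ is one of the defining seminorms. For the inclusion $C^\infty_c(G)\subset H^\infty_L(G)$, note that if $f\in C^\infty_c(G)$ with support in a compact set $C$, then $L$ is bounded on $C$, say by $R$, so $\nu_k(f)\leq (1+R)^k\|f\|_{L^2}$ for all $k$; this shows $f\in H^\infty_L(G)$ and that the inclusion is continuous on each $C^\infty_c(G;C)$, hence continuous for the inductive limit topology. (One should also note that $H^\infty_L(G)$ is independent of the choice of left-invariant Riemannian metric up to equivalence of Fr\'echet structures, since any two length functions of this type are coarsely equivalent on a group with finitely many components; this is why the statement fixes $L$ once and for all.) Property (2) is precisely the content of the (RD) hypothesis together with the standard fact that $H^\infty_L(G)$ is closed under convolution when (RD) holds: the continuous inclusion $H^\infty_L(G)\hookrightarrow C^*_r(G)$ is the definition of (RD) (Definition \ref{prop-rd}), the map is injective because it is already injective on the dense subspace $C^\infty_c(G)$ and factors through $L^2(G)$, and the subalgebra property follows because convolution $H^\infty_L(G)\times H^\infty_L(G)\to H^\infty_L(G)$ is continuous — this is the usual Jolissaint-type argument, valid precisely under (RD).

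Property (3), holomorphic closedness of $H^\infty_L(G)$ in $C^*_r(G)$, is the step I expect to be the main point. The standard route is to show that $H^\infty_L(G)$ is a \textit{smooth subalgebra} (in the sense of being stable under holomorphic functional calculus), which follows from the fact that it can be written, or is contained in, the intersection of domains of powers of an unbounded derivation, or alternatively by the Jolissaint argument: one shows that for $a\in H^\infty_L(G)$ invertible in $C^*_r(G)$ one has $a^{-1}\in H^\infty_L(G)$, using that $H^\infty_L(G)$ is a dense Fr\'echet subalgebra stable under the operations that control the seminorms $\nu_k$ under multiplication. In fact this is well documented in the literature on property (RD) — e.g.\ the references \cite{Chatterji-et-al-duke} and \cite{ji-s-K} cited just above Definition \ref{prop-rd} — so the proof should cite these rather than reprove them; the content is that $(G,L)$ with (RD) gives a dense holomorphically closed Fr\'echet subalgebra $H^\infty_L(G)\subset C^*_r(G)$. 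We also record, as noted after Definition \ref{prop-rd}, that (RD) forces $G$ to be unimodular, so that the convolution algebra structure and the formulas for the kernels $T_{\widetilde k}$ behave as expected.

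Having verified (1)--(3), we may apply Proposition \ref{lemma:HC-holoclosed} with $\mathcal{A}(G) := H^\infty_L(G)$. This produces the Fr\'echet algebra
\[
\widetilde{A}_G(M,E) := \left(H^\infty_L(G)\,\hat{\otimes}\,\Psi^{-\infty}(S,E|_S)\right)^{K\times K}
\]
and its realization as an algebra of bounded operators on $L^2(M,E)$, which we christen $\mathcal{S}^\infty_G(M,E) := \mathcal{A}_G(M,E) = \{T_{\widetilde k} : \widetilde k\in \widetilde{A}_G(M,E)\}$. By part (i) of Proposition \ref{lemma:HC-holoclosed} this contains $\mathcal{S}^c_G(M,E)$ as a (continuously included) subalgebra, and by part (ii) it is dense and holomorphically closed in the Roe algebra $C^*(M,E)^G$. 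Every element of $\mathcal{S}^\infty_G(M,E)$ is by construction an integral operator, with $G$-equivariant Schwartz kernel $\kappa(gs,g's') = g\,\widetilde k(g^{-1}g',s,s')\,g'^{-1}$. This establishes all the assertions of the Proposition. \qed
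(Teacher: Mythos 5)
Your proof is correct and follows essentially the same route as the paper's: properties (1) and (2) are routine or follow from Jolissaint's work under (RD), property (3) is the holomorphic-closedness result also due to Jolissaint, and the conclusion is then immediate from Proposition \ref{lemma:HC-holoclosed} with $\mathcal{A}(G)=H^\infty_L(G)$. The only loose spot is your injectivity remark in (2) — injectivity on a dense subspace does not by itself give injectivity, but the standard approximate-identity argument ($f*\xi_n\to f$ in $L^2$ forces $f=0$ if $\lambda(f)=0$) fills this in trivially.
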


%\noindent
%Notice that, in particular, the Schwartz kernel of the heat operator $\exp (-tD^2)$ is an element in 
%$\mathcal{S}_{G}^\infty (M)$.

\begin{proof}
The fact that $H^\infty_L (G)$ is not only contained in $C^*_r (G)$, via convolution, but in fact a subalgebra 
of it, follows from \cite{jolissaint-tams}. Hence  $H^\infty_L (G)$ satisfies the properties (1) and (2) given in Definition \ref{def-ag}. 
The fact that this subalgebra is holomorphically closed is proved as in 
\cite{jolissaint-k}.
The rest of the proposition then follows from Proposition \ref{lemma:HC-holoclosed}.
 \end{proof}

\noindent
\begin{example} Examples of Lie groups that satisfy property RD, and to which our theory applies, are:
\begin{itemize}
\item[1.] The abelian group $\RR^n$ satisfies (RD). In this case the algebra $H^\infty_L(\RR^n)$ associated to the length function defined by the Euclidean metric is the algebra of rapidly decaying functions on $\RR^n$.
\item[2.] Connected semisimple Lie groups satisfy property (RD), c.f.\ \cite{Chatterji-et-al-duke}, for example $G=SL(2,\RR)$. In this case the algebra $H^\infty_L(G)$ is closely related to Harish--Chandra's Schwartz  algebra $\mathcal{C}(G)$, see below.
\end{itemize}
\end{example}
\begin{remark}
We have just seen that if $G$ is semisimple then by choosing $\mathcal{A}(G)=H^\infty_L (G)$ 
we obtain a holomorphically closed
subalgebra $\mathcal{S}_{G}^\infty (M,E)\subset C^*(M,E)^G$. Notice that  there are other algebras that can be considered.
For example, we can consider as in  \cite{Hochs-Wang-HC}  the Harish-Chandra Schwartz  algebra $\mathcal{C}(G)\subset
C^*_r (G)$. This is a holomorphically closed subalgebra of $C^*_r (G)$, c.f.\ \cite{Lafforgue}, which is made of smooth functions
acting by convolution. 
The corresponding  algebra 
 $\mathcal{C}_G (M,E)\subset C^*(M,E)^G$
 is a subalgebra of  $C^*(M,E)^G$ with elements that are in fact smoothing operators.
%A second example is the analogue of the Connes-Moscovici algebra in the case of Lie groups, we denote
%it here as $\mathcal{B}(G)$ \footnote{\textcolor{m}{This should exist each time we have a length-function.}}. Of course
%it needs to be established that $\mathcal{B}(G)$ does satisfy the properties (1) and (2).\\
%\red{What are the weakest hypothesis ensuring that $\mathcal{C}(G)$ and/or $\mathcal{B}(G)$ satisfy the properties (1) and (2) ?}\\
%For the choice $\mathcal{A}(G):=\mathcal{C}(G)$ we denote the corresponding  algebra of smoothing operators
%by $\mathcal{C}_G (M,E)$.\\
%For the choice $\mathcal{A}(G):=\mathcal{B}(G)$ we denote the corresponding  algebra of smoothing operators
%by $\mathcal{B}_G (M,E)$.\\
%\red{$\bullet \rightarrow$ it would be nice to have a clear picture of the relationship between all these algebras.}\\ 
One can prove, see  \cite[\S II.9]{Varadarajan}, that 
$ \mathcal{C}(G)\subset H^\infty_L (G) $ and thus, consequently, $\mathcal{C}_G (M,E)\subset\mathcal{S}_{G}^\infty (M,E)$.
Notice that Hochs and Wang have proved that the heat operator $\exp (-tD^2)$ is an element in 
$\mathcal{C}_{G} (M,E)$. Hence $\exp (-tD^2)\in \mathcal{S}_{G}^\infty (M,E)$.\\

%For this last result we could also proceed as follows:\\
%we show that  $\mathcal{A}^{{\exp}} (G)\subset H^\infty_L (G) $, we consider the algebra 
%$ (\mathcal{A}^{{\exp}} (G)\hat{\otimes} \Psi^{-\infty}(S,E|_S))^{K\times K}$, we prove that 
% $$\{T_{\widetilde{k}}, \widetilde{k}\in (\mathcal{A}^{{\exp}} (G)\hat{\otimes} \Psi^{-\infty}(S,E|_S))^{K\times K})\}$$
%is precisely equal to $\mathcal{A}_G^{{\exp}} (M,E)$, thus obtaining
%that   $\mathcal{A}_G^{{\exp}} (M,E)\subset \mathcal{S}_{G}^\infty (M,E)$, and we use the "well-known fact" that
%the Schwartz kernel of the heat operator $\exp (-tD^2)$ is an element in $\mathcal{A}_G^{{\exp}} (M,E)$.
\end{remark}

\section{Index classes}
%\blue{$\longrightarrow$} Define the Connes-Moscovici projector and the resulting index class. \\Pair with $\alpha_f$
%so as to define higher indeces. 
%%Clarify (rapid) polynomial decay versus rapid exponential decay for the
%%Connes-Moscovici projector. 
%\blue{$\longleftarrow$}
%

\noindent
From now on we shall make constant use of the identification $\mathcal{A}^c_G (M,E)\equiv \mathcal{S}^c_G (M,E)$.

\subsection{The index class in $K_* (C^*(M,E)^G)$}We consider as before 
a closed  even-dimensional manifold $M$ with a proper cocompact action of $G$.
Let $D$ a $G$-equivariant odd $\ZZ_2$-graded Dirac operator.
Recall, first of all,  the classical Connes-Skandalis idempotent. Let $Q_\sigma$ be a $G$-equivariant parametrix
of $G$-compact support with remainders $S_\pm$; here the subscript $\sigma$ stands
for symbolic. Consider the $2\times 2$ matrix
\begin{equation}\label{CS-projector}
P_{\sigma}:= \left(\begin{array}{cc} S_{+}^2 & S_{+}  (I+S_{+}) Q\\ S_{-} D^+ &
I-S_{-}^2 \end{array} \right).
\end{equation}
This produces a class 
\begin{equation}\label{CS-class}
\operatorname{Ind_c} (D):= [P_{\sigma}] - [e_1]\in K_0 (\mathcal{A}_G^c (M,E))\;\;\text{with}\;\;e_1:=\left( \begin{array}{cc} 0 & 0 \\ 0&1
\end{array} \right)
\end{equation}
To understand  where this definition comes from, see for example \cite{CM}.\\
Recall now that $\mathcal{A}_G^c (M,E)\subset C^* (M,E)^G$.

\begin{definition}
The $C^*$-index associated to $D$ is the class $\Ind_{C^*(M,E)} (D)\in K_0 (C^* (M,E)^G)$ obtained by taking the 
image of the Connes-Skandalis
projector  in $K_0 (C^* (M,E)^G)$.\\ 
Unless absolutely necessary we shall denote this index class simply by $\Ind (D)$.
\end{definition}

\begin{remark}
If we are in the position of considering a dense holomorphically closed subalgebra $\mathcal{A}_G (M,E)$ 
of $C^* (M,E)^G$ as in the previous section, then we can equivalently take the image of the Connes-Skandalis  
projector in $K_0 (\mathcal{A}_G (M,E))$ (recall that, by construction, $\mathcal{A}_G^c (M,E)\subset \mathcal{A}_G (M,E)
\subset C^* (M,E)^G$).
For example, if $G$ satisfies (RD) and $|\pi_0 (G)|<\infty$, then we can take the $C^*$-index class as the image of the
Connes-Skandalis projector in $K_0 (\mathcal{S}^\infty_G (M,E))$.
\end{remark}

\begin{remark} %\red{This can be moved to the next paper}
There are other representatives of $\Ind (D)\in K_0 (C^* (M,E)^G)$ that can be of great
interest. \\ 
For example, as in Connes-Moscovici  
\cite{CM}, we can choose  the parametrix (which is not of $G$-compact support)
%\begin{equation}
 $Q_{V}=
:= \frac{I-\exp(-\frac{1}{2} D^- D^+)}{D^- D^+} D^+$
%\end{equation}
obtaining
$I-Q_{V} D^+ = \exp(-\frac{1}{2} D^- D^+)$, $I-D^+ Q_{V} =  \exp(-\frac{1}{2} D^+ D^-)$.\\
This particular choice of parametrix produces
 the
idempotent
\begin{equation}
\label{cm-idempotent}
V_{D}=\left( \begin{array}{cc} e^{-D^- D^+} & e^{-\frac{1}{2}D^- D^+}
\left( \frac{I- e^{-D^- D^+}}{D^- D^+} \right) D^-\\
e^{-\frac{1}{2}D^+ D^-}D^+& I- e^{-D^+ D^-}
\end{array} \right)
\end{equation}
We call this the Connes-Moscovici idempotent. 
%One can prove that $V_{D}$ belongs to $M_{2\times 2} (\mathcal{A}^{{\rm \exp}}_G (M))$ (with the identity adjoined) with $\mathcal{A}^{{\rm \exp}}_G (M)$ denoting the
%algebra of smooth $G$-equivariant kernels of rapid exponential decay. We shall not use this information in this paper.
%If we look at this idempotent in 
%$K_0 (C^* (M,E)^G)$ then it is not difficult to establish that 
%$$ \Ind (D):= [P_{\sigma}] - [e_1]= [V_{D}]-[e_1]\quad \text{in}\quad  K_0 (C^* (M,E)^G)\,.$$
One can also consider the graph-projection $[e_{D}]-[e_1]\in  K_0 (C^* (M,E)^G)$
with  $e_D$  given by
\begin{equation}\label{graph-projector}
e_D=\left(\begin{array}{cc} (I+D^- D^+)^{-1} & (I+D^- D^+)^{-1} D^-\\ D^+   (I+D^- D^+)^{-1} &
D^+  (I+D^- D^+)^{-1} D^- \end{array} \right)\,.
\end{equation}
%Then, once again, one proves easily  that $ \Ind (D)=[e_{D}]-[e_1]\in  K_0 (C^* (M,E)^G)$. 
Finally, following
Moscovici and Wu \cite{MWU}, we can consider the 
projector 
\begin{equation}\label{MW-projector}
P(D):= \left(\begin{array}{cc} S_{+}^2 & S_{+}  (I+S_{+}) \mathcal{P}\\ S_{-} D^+ &
I-S_{-}^2 \end{array} \right).
\end{equation}
with $\mathcal{P}=\overline{u}(D^- D^+)D^-$, $S_+=I-\mathcal{P} D^+$, $S_- = I-D^+ \mathcal{P}$
and 
$\overline{u}(x):= u(x^2)$ with $u\in C^\infty(\RR)$ an even function with the property that $w(x)=1-x^2 u(x)$
is a Schwartz function and $w$ and $u$ have compactly supported Fourier transform. One proves easily that
$P(D)\in M_{2\times 2} (\mathcal{A}^c_G (M,E))$ (with the identity adjoined).
%and that
% $ \Ind (D)$ is equal to $[P(D)]-[e_1]\in  K_0 (C^* (M,E)^G)$. Summarizing:
It is not difficult to prove  that
 $$ \Ind (D):= [P_{\sigma}] - [e_1]= [V_{D}]-[e_1]=[e_{D}]-[e_1]=[P(D)]-[e_1]\quad \text{in}\quad  K_0 (C^* (M,E)^G)\,.$$
The advantage of using the Connes-Moscovici projection, the graph projection or the Moscovici-Wu projection
is that Getzler rescaling can be used in order to prove the corresponding higher index formulae. This is crucial
if one wishes to pass, for example, to manifolds with boundary.
However, in this paper we shall concentrate solely on closed manifolds and will rather use the approach 
to the index theorem given in  \cite{PPT}; this employs  the algebraic index theorem in a fundamental way.

\end{remark}

\subsection{The index class in $K_\bullet (C^*_r (G))$}
There is a canonical Morita isomorphism $\mathcal{M}$ between $K_* (C^* (M,E)^G)$ and  $K_* (C^*_r (G))$.
This is clear
once we bear in mind that $C^* (M,E)^G$ is isomorphic to $\mathbb{K} (\mathcal{E})$; however, 
for reasons connected with the extension of cyclic cocycles, we want to be explicit about this
isomorphism. We assume the existence of a dense holomorphically closed subalgebra
$\mathcal{A}(G)\subset C^*_r (G)$ and follow Hochs-Wang  \cite{Hochs-Wang-HC}. %Recall that there exists a canonical isomorphism of $C^*$-algebras between $\mathbb{K}(\mathcal{E})$ and $C^*(M,E)^G$. 
Let $\mathcal{A}_G (M,E)$ be the dense holomorphically dense subalgebra
of $C^*(M,E)^G$ corresponding to $\mathcal{A}(G)$, as defined in Subsection \ref{subsect:holo}.
Define a partial trace map $\Tr_S: \mathcal{A}_G (M,E)\to \mathcal{A}(G)$
associated to the slice $S$ as follows: if $f\otimes k\in (\mathcal{A}(G))\hat{\otimes}\Psi^{-\infty}(S,E|_S))^{K\times K}$ then 
\[
\Tr_S (f\otimes k):= f \Tr (T_k)=f\int_S \tr k(s,s)ds,
\] 
with $T_k$ denoting the smoothing operator on $S$ defined by $k$ and
$\Tr (T_k)$ its functional analytic trace on $L^2 (S,E|_S)$. It is proved in  \cite{Hochs-Wang-HC} that this map
induces the Morita isomorphism $\mathcal{M}$ between $K_* (C^* (M,E)^G)$ and  $K_* (C^*_r (G))$.
We denote the image through $\mathcal{M}$ of the index class $\Ind (D)\in K_0 (C^*(M)^G)$ in the group
$K_0 (C^*_r (G))$ by $\Ind_{C^*_r (G)} (D)$. There are other, well-known descriptions of the latter index class:
one, following Kasparov, see \cite{kasparov-functor}, describes the $C^*_r (G)$-index class
as the difference of two finitely generated projective $C^*_r (G)$-modules, using the invertibility modulo $C^*_r G)$-compact
operators of (the bounded-tranform of) $D$; the other description
is via assembly  and $KK$-theory, as in \cite{BCH}.
All these descriptions of the class $\Ind_{C^*_r (G)} (D)\in K_0 (C^*_r (G)) $ are equivalent. See \cite{roe-comparing}
and \cite[Proposition 2.1]{PS-Stolz}.

\section{Cyclic cocycles and pairings with $K$-theory}
\subsection{Cyclic cohomology}
In this subsection we shortly review the basic complex computing cyclic cohomology. Let $A$ be a unital algebra. The space of reduced Hochschild cochains is defined as
\[
C^\bullet_{\rm red}(A):=\Hom_\mathbb{C}(A\otimes(A\slash\mathbb{C} 1)^\bullet,\mathbb{C})
\] 
and is equipped with the Hochschild differential 
$b: C^k_{\rm red}(A)\to C^{k+1}_{\rm red}(A)$ given by the standard formula
\[
b\tau(a_0,\ldots,a_{k+1}):=\sum_{i=0}^k(-1)^i\tau(a_0,\ldots,a_ia_{i+1},\ldots,a_k)+(-1)^{k+1}\tau(a_ka_0,\ldots,a_{k-1}).
\]
The cyclic bicomplex is given by
\[
\xymatrix{\ldots&\ldots&\ldots\\
C^2_{\rm red}(A)\ar[r]^B\ar[u]^b&C^1_{\rm red}(A)\ar[r]^B\ar[u]^b&C^0_{\rm red}(A)\ar[u]^b\\
C^1_{\rm red}(A)\ar[r]^B\ar[u]^b&C^0_{\rm red}(A)\ar[u]^b&\\
C^0_{\rm red}(A)\ar[u]^b
}
\]
where $B:C^k_{\rm red}(A)\to C^{k-1}_{\rm red}(A)$ denotes Connes' cyclic differential
\[
B\tau(a_0,\ldots,a_{k-1}):=\sum_{i=0}^{k-1}(-1)^{(k-1)i}\tau(1,a_i,\ldots,a_{k-1},a_0,\ldots, a_{i-1}).
\] 
We denote the total complex associated to this double complex by $CC^\bullet(A)$. When $A$ is not unital, we consider the unitization $\tilde{A}=A\oplus\mathbb{C}$, and 
compute cyclic cohomology from the complex $CC^\bullet(A):=CC^\bullet(\tilde{A})\slash CC^\bullet(\mathbb{C})$. 

Finally, let us close by mentioning that the structure underlying the definition of cyclic cohomology is that of a cocyclic object: this is a cosimplicial object 
$(X^\bullet,\partial^\bullet,\sigma^\bullet)$ equipped with an additional cyclic symmetry $t^n:X^n\to X^n$ of order $n+1$ satisfying well-known compatibility conditions with respect
to the coface operators $\partial$ and degeneracies $\sigma$, c.f.\ \cite{loday}. For the cyclic cohomology of an algebra the underlying cosimplicial object is given by 
$X^k=C^k(A)$ with coface and degeneracies controlling the Hochschild complex. The additional cyclic symmetry $t$ underlying cyclic cohomology is simply the operator which in 
degree $k$ cyclically permutes the $k+1$ entries in a cochain $\tau\in C^k(A)$.

\subsection{The van Est map in cyclic cohomology}
Let $G$ be a unimodular Lie group with $|\pi_0 (G)|<\infty$.
In this subsection we describe, following \cite{ppt1,PPT}, how to obtain cyclic cocycles from smooth group cocycles. In this, we can work with two algebras: $C^\infty_c(G)$, the 
convolution algebra of the group, and $\mathcal{A}^c_G(M)$, the algebra of invariant smoothing operators with cocompact support. In order to simplify the notation we take  the vector bundle $E$ to be the product bundle of rank 1.

We start with the following well-known remark: inspection of the differential \eqref{diff-gc}  shows that the cochain complex $(C^\bullet_{\rm diff}(G),\delta)$ computing smooth group 
cohomology $H^\bullet_{\rm diff}(G)$ comes from an underlying cosimplicial structure given by coface maps $\partial^i$ and codegeneracies $\sigma^j$ defined on the
vector space of homogeneous smooth group cochains $C^\bullet_{\rm diff}(G)$.
This simplicial vector space can be upgraded to a cocyclic one by the cyclic operator $t:C^\bullet\to C^\bullet$ given by
\[
(tf)(g_0,\ldots,g_k)=f(g_k,g_0,\ldots,g_{k-1}),\quad f\in C^k_{\rm diff}(G).
\]
As seen above, the Hochschild theory of this cocyclic complex is just the smooth group cohomology. The associated cyclic theory is given by $\bigoplus_{i\geq 0} H^{\bullet-2i}_{\rm diff}(G)$.

Let us now describe the associated cyclic cocycles on the convolution algebra $C^\infty_c(G)$. Instead of using the full complex of smooth group cochains, we shall restrict to the quasi-isomorphic subcomplex 
$C^\bullet_{{\rm diff},\lambda}(G)\subset C^\bullet_{\rm diff}(G)$ of {\em cyclic} cochains, i.e., cochains $c\in C^k_{\rm diff}(G)$ satisfying
\[
c(g_0,\ldots,g_k)=(-1)^k c(g_k,g_0,\ldots,g_{k-1}).
\]
Let $c\in C^k_{\rm diff}(G)$ be a smooth homogeneous group cochain. Define the 
cyclic cochain $\tau_c\in C^k(C^\infty_c(G))$ by 
\begin{equation}
\label{ccg}
\tau^G_c(a_0,\ldots,a_k):=\int_{G^{\times k}}c(e,g_1,g_1g_2,\ldots,g_1\cdots g_k)a_0((g_1\cdots g_k)^{-1})a_1(g_1)\cdots a_k(g_k)dg_1\cdots dg_k.
\end{equation}
Next up is the algebra $\mathcal{A}^c_G(M)$ of invariant smoothing operators with cocompact support. Again given a smooth homogeneous group cochain $c\in C^k_{\rm diff}(G)$,
we now define a cyclic cochain on this algebra by the formula
\begin{equation}
\label{cca}
\begin{split}
\tau^M_c(k_0,\ldots,k_n):=\int_{G^{\times k}}\int_{M^{\times (k+1)}}\chi(x_0)\cdots\chi(x_n)k_0(x_0,g_1x_1)\cdots k_n(x_n,(g_1\cdots &g_n)^{-1}x_0)\\
c(e,g_1,g_1g_2,\ldots,g_1\cdots  g_n)& dx_0\cdots dx_ndg_1\cdots dg_n.
\end{split}
\end{equation}
\begin{proposition} The following holds true:
\begin{itemize}
\item[$i)$]
The map $c\mapsto \tau^G_c$ defined above is a morphism of cochain complexes 
and therefore induces a map 
\[
\Psi_G:H^\bullet_{\rm diff}(G)\to HC^\bullet(C^\infty_c(G)).
\]
\item[$ii)$] The map $c\mapsto \tau^M_c$ defined above is a morphism of cocyclic complexes and therefore induces a map 
\[
\Psi_M:H^\bullet_{\rm diff}(G)\to HC^\bullet(\mathcal{A}^c_G(M)).
\]
\end{itemize}
\end{proposition}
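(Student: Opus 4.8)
The plan is to prove both statements by the same mechanism, exploiting the fact that everything is controlled by an underlying \emph{cocyclic} structure. The key observation is that the two formulas \eqref{ccg} and \eqref{cca} are not ad hoc: each arises by pairing a smooth group cochain with a fixed ``fundamental'' element of an appropriate standard complex, in a way that is simplicially natural. So I would first recall (or set up) the cocyclic vector space $C^\bullet_{\rm diff}(G)$ with its coface maps $\partial^i$, codegeneracies $\sigma^j$ and the cyclic operator $t$; the differential $\delta$ of \eqref{diff-gc} is, up to the usual sign conventions, the alternating sum $\sum (-1)^i \partial^i$ of the coface maps, and one checks directly that $t$ has order $k+1$ on $C^k_{\rm diff}(G)$ and satisfies the cocyclic identities with $\partial^i,\sigma^j$. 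This reduces both (i) and (ii) to producing, for each of the two target algebras $A = C^\infty_c(G)$ and $A = \mathcal{A}^c_G(M)$, a \emph{morphism of cocyclic modules} from $C^\bullet_{\rm diff}(G)$ to the standard cocyclic module $C^\bullet(A)$ of $A$ (whose Hochschild cohomology is $HH^\bullet(A)$ and whose associated cyclic cohomology is $HC^\bullet(A)$). A morphism of cocyclic modules automatically induces maps on both Hochschild and cyclic cohomology, and restricting to the cyclic (i.e.\ $\lambda$) subcomplex on the source — which is quasi-isomorphic by the standard Connes argument — gives precisely the maps $\Psi_G$ and $\Psi_M$ landing in $HC^\bullet$.

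For part (i): I would define, for each $k$, the linear map $\Theta^G_k : C^k_{\rm diff}(G) \to C^k(C^\infty_c(G))$ by the right-hand side of \eqref{ccg}, i.e.\ $\Theta^G_k(c) = \tau^G_c$. The content is to check that $\Theta^G := (\Theta^G_k)_k$ commutes with all the cocyclic structure maps: $\Theta^G \circ \partial^i = \partial^i \circ \Theta^G$ for $i=0,\dots,k+1$, $\Theta^G \circ \sigma^j = \sigma^j \circ \Theta^G$, and $\Theta^G \circ t = t \circ \Theta^G$. Each of these is a direct computation with the integral \eqref{ccg}: the coface identities come down to the combinatorics of inserting an identity group element / multiplying two consecutive $a_i$'s and using the translation-invariance $c(gg_0,\dots,gg_k)=c(g_0,\dots,g_k)$ together with the substitution $g_i \mapsto g_ig_{i+1}$ in the convolution variables, the codegeneracies to evaluating $a_i = $ unit (which requires working in the unitization, handled by the reduced complex), and the cyclic identity to the substitution sending $(g_1,\dots,g_k)$ to the ``rotated'' tuple, using unimodularity of $G$ (so that the change of variables preserves Haar measure) — this is exactly the point where the hypothesis $G$ unimodular enters. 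Once $\Theta^G$ is a map of cocyclic modules, the general machinery of \cite{loday,connes} gives the induced map $HC^\bullet(C^\infty_c(G))$, and after restricting the source to $C^\bullet_{{\rm diff},\lambda}(G)$ one obtains $\Psi_G : H^\bullet_{\rm diff}(G) \to HC^\bullet(C^\infty_c(G))$; compatibility with $b$ alone (the statement ``morphism of cochain complexes'' for the Hochschild part) is the special case that uses only the $\partial^i$.

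For part (ii): same strategy, now with $\Theta^M_k(c) = \tau^M_c$ from \eqref{cca}. The extra ingredient is the cut-off function $\chi$, which appears symmetrically in all $k+1$ manifold variables, and the pairing between the smoothing kernels $k_i(x_i, g\,x_{i+1})$ and the group variables. The cocyclic-morphism identities are checked exactly as before: the composition law of kernels $(k_i * k_{i+1})(x_i, y) = \int_M k_i(x_i, z)k_{i+1}(z,y)\,dz$ realizes the face maps $\partial^i$ for $0 \le i \le n$, the ``wrap-around'' face $\partial^{n+1}$ pairs $k_n$ with $k_0$ and uses $\int_M \chi(g^{-1}x)\,dg = 1$ together with $G$-invariance of the kernels to absorb one integration, the degeneracies again need the unitization, and the cyclic symmetry uses the defining property of $\chi$ plus unimodularity in the same way as in (i). I expect the main obstacle to be precisely the verification of the ``wrap-around'' face map identity and the cyclic identity in case (ii): these are where the defining equation of the cut-off function $\int_G \chi(g^{-1}x)\,dg = 1$ and the $(G\times G)$-invariance of the kernels must be combined with a Fubini/substitution argument, and keeping track of which group element gets absorbed requires care. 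A useful sanity check, which I would include, is that $\tau^M_c$ and $\tau^G_c$ are intertwined by the (partial trace type) map $\mathcal{A}^c_G(M) \to C^\infty_c(G)$ coming from the slice — or at least that $\Psi_M$ and $\Psi_G$ are compatible under the Morita-type identification — since the geometric applications will ultimately require comparing the two pairings on index classes anyway; but for this proposition only the two morphism-of-complexes claims are needed, and they follow from the cocyclic-naturality computations sketched above.
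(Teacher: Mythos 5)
Your proposal is correct and matches the intended argument: the paper itself disposes of this proposition by citing \cite[\S 1.3]{ppt1} and \cite[\S 2.2]{PPT}, and the content of those references is precisely the cocyclic-module verification you sketch (compatibility of $c\mapsto\tau^G_c$ and $c\mapsto\tau^M_c$ with the coface, codegeneracy and cyclic operators, with unimodularity entering in the cyclic identity and the cut-off identity $\int_G\chi(g^{-1}x)\,dg=1$ entering in the wrap-around face for $\tau^M_c$). So you are reconstructing the cited proof rather than deviating from it; no gap.
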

\begin{proof}
Both of the statements are already known: for the first one, see \cite[\S 1.3]{ppt1}, and \cite[\S 2.2]{PPT} for the second.
\end{proof}
\begin{example}
In Example \ref{ex-gc} we discussed the smooth group $2$-cocycles for $G=\RR^2$, $G=SL(2,\RR)$, associated to the area forms of the homogeneous space $G\slash K$, equal to $\RR^2$ and $\mathbb{H}^2$ respectively.
Let us now consider the cyclic cocycles defined by these forms via the construction \eqref{ccg} above.  For $G=SL(2,\mathbb{R})$ this gives the following cyclic $2$-cocycle on $C^\infty_c(SL(2,\RR)$:
\[
\tau^{SL(2,\RR)}_\omega(f_0,f_1,f_2):=\int_{SL(2,\mathbb{R})}\int_{SL(2,\mathbb{R})} f_0((g_1g_2)^{-1})f_1(g_1)f_2(g_2){\rm Area}_{\mathbb{H}^2}({\Delta^2(\bar{e},\bar{g}_1,\bar{g}_2)})dg_1dg_2
\]
This is exactly the cyclic cocycle considered by Connes in \cite[\S 9]{Connes-ncdg}. For $G=\RR^2$ we get a cyclic $2$-cocycle on $C^\infty_c(\RR^2)$ (with convolution product) given by the same formula with the hyperbolic area replaced by the Euclidean area, 
and integrations being over $\RR^2$ instead of $SL(2,\RR)$, again considered in \cite[\S 9]{Connes-ncdg}. After Fourier transform $f\mapsto \hat{f}$ this cocycle takes the usual form 
\[
\tau_\omega(f_0,f_1,f_2)=\int_{\RR^2}\hat{f}_0d\hat{f}_1\wedge d\hat{f}_2,\quad \mbox{for}~ f_0,f_1,f_2\in C^\infty_c(\RR^2).
\]
\end{example}
\subsection{Extension properties}
In the previous subsection we have constructed cyclic cocycles $\tau^G_c$ on $C^\infty_c(G)$ and $\tau^M_c$ on $\mathcal{A}^c_G(M)$ from a homogeneous smooth group cocycle 
$c$. (Recall, once again, that for notational convenience we are taking $E$ to be the product rank 1 bundle.)
In \S \ref{subsect:holo} we have given sufficient conditions on $G$ ensuring that these algebras embed into holomorphically closed subalgebras $\mathcal{A}(G)$ and $\mathcal{A}_G(M)$ of the reduced group 
$C^*$-algebra and of the Roe algebra. Now we want to discuss the extension properties of these cocycles. 
Assume, quite generally, that we are given a subalgebra $\mathcal{A}(G)$ as in Definition
\ref{def-ag}, with associated  algebra of operators on $L^2 (M)$ denoted, as usual, as $\mathcal{A}_G(M)$. First we have:
\begin{proposition}\label{prop:2-cocycles}
Let $c\in C^k_{{\rm diff},\lambda}(G)$ be a smooth group cocycle. Then we have:
\[
\tau^G_c~\mbox{extends to}~\mathcal{A}(G) \Longleftrightarrow \tau^M_c~\mbox{extends to}~\mathcal{A}_G(M)
\]
\end{proposition}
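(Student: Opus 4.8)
The plan is to establish a direct relationship between the cyclic cocycle $\tau^G_c$ on the convolution algebra and the cyclic cocycle $\tau^M_c$ on the algebra of invariant smoothing operators, using the slice-theorem decomposition developed in Section \ref{section:algebras-closed}. Recall that the slice theorem gives, for the algebra $\mathcal{A}(G)$, an identification $\widetilde{A}_G(M)\cong(\mathcal{A}(G)\hat{\otimes}\Psi^{-\infty}(S))^{K\times K}$, and the partial trace map $\Tr_S:\mathcal{A}_G(M)\to\mathcal{A}(G)$ from Section \ref{section:algebras-closed} (the one inducing the Morita isomorphism). The first step is to show that under the partial trace $\Tr_S$, the cocycle $\tau^M_c$ is related to $\tau^G_c$: more precisely, I would show that $\tau^M_c$ is cohomologous (as a cyclic cocycle on $\mathcal{A}^c_G(M)$, or at least has the same extension behavior) to the pullback $(\Tr_S)^*\tau^G_c$, i.e.\ to the cocycle $(k_0,\ldots,k_n)\mapsto\tau^G_c(\Tr_S(k_0),\ldots,\Tr_S(k_n))$ up to terms that manifestly extend. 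This kind of statement — that the Morita equivalence intertwines the two van Est constructions in cyclic cohomology — is implicit in \cite{ppt1,PPT}; the compatibility of $\Psi_G$ and $\Psi_M$ with the Morita map is exactly what makes the index formula in item (6) of the introduction work.

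Granting such a compatibility, the forward implication ``$\tau^G_c$ extends to $\mathcal{A}(G)$ $\Rightarrow$ $\tau^M_c$ extends to $\mathcal{A}_G(M)$'' is then essentially formal: if $\tau^G_c$ is continuous for the Fréchet topology of $\mathcal{A}(G)$, then since $\Tr_S:\mathcal{A}_G(M)\to\mathcal{A}(G)$ is continuous (it is the composition of the identification with $(\mathcal{A}(G)\hat\otimes\Psi^{-\infty}(S))^{K\times K}$ and the obviously continuous functional-analytic trace on the nuclear Fréchet algebra $\Psi^{-\infty}(S)$), the composite $(k_0,\ldots,k_n)\mapsto\tau^G_c(\Tr_S k_0,\ldots,\Tr_S k_n)$ is continuous on $\mathcal{A}_G(M)$, and hence so is $\tau^M_c$ after adding the extending correction terms. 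For the reverse implication, one uses that $C^\infty_c(G)$ sits inside $\mathcal{A}^c_G(M)$ in a compatible way: choosing a fixed element (a $K$-bi-invariant rank-one smoothing projection) $p\in\Psi^{-\infty}(S)$ with $\Tr(T_p)=1$, the map $f\mapsto f\otimes p$ embeds $\mathcal{A}(G)$ into $\mathcal{A}_G(M)$ continuously, and $\Tr_S(f\otimes p)=f$; pulling $\tau^M_c$ back along this embedding recovers $\tau^G_c$ (up to extending terms), so continuity of $\tau^M_c$ forces continuity of $\tau^G_c$.

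The main obstacle I anticipate is the first step: making precise and proving the compatibility of $\tau^M_c$ with $\tau^G_c$ under $\Tr_S$ at the cochain level (not merely at the level of cyclic cohomology classes), keeping careful track of the cut-off function $\chi$, the Haar normalization ($\vol(K)=1$), and the $K\times K$-invariance. The explicit formula \eqref{cca} for $\tau^M_c$ involves integrations over $M^{\times(k+1)}$ against the cut-off functions and the kernels $k_i$, while \eqref{ccg} for $\tau^G_c$ involves only group integrations; rewriting \eqref{cca} in slice coordinates $x_i=g_is_i$ and performing the $S$-integrations should produce partial traces $\Tr(T_{k_i\text{-slice-component}})$ paired against exactly the group-cochain integrand of \eqref{ccg}, but verifying that the slice cut-off contributions collapse correctly (using $\int_G\chi(g^{-1}x)dg=1$ and $\vol(K)=1$) is where the real computation lies. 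Once this identity is in hand — or cited from \cite{PPT} in the form that the diagram relating $\Psi_G$, $\Psi_M$ and the partial trace commutes up to continuous coboundaries — both implications follow by the continuity arguments sketched above.
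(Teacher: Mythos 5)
Your proposal follows essentially the same route as the paper: decompose the invariant kernels via the slice theorem, carry out the $M$-integrations in \eqref{cca} against the compactly supported cut-off, and recognize the result as the pairing of elements of $\mathcal{A}(G)^{\otimes(k+1)}$ with the group cocycle $c$ as in \eqref{ccg}, from which both implications follow by continuity (your rank-one projection $f\mapsto f\otimes p$ makes the reverse implication explicit, which the paper leaves as ``clear''). The one inaccuracy is your first step: at the cochain level $\tau^M_c$ is not $(\Tr_S)^*\tau^G_c$ up to coboundary but rather the external (cup) product of $\tau^G_c$ with the operator trace on $\Psi^{-\infty}(S)$ --- the $S$-integrations couple consecutive kernels and yield a single factor $\Tr(T_0T_1\cdots T_k)$ for elementary tensors $k_i=f_i\otimes T_i$, not a product of individual partial traces --- though this does not affect either direction of the extension argument, since for your rank-one projection one still gets $\tau^M_c(f_0\otimes p,\ldots,f_k\otimes p)=\Tr(p^{k+1})\,\tau^G_c(f_0,\ldots,f_k)=\tau^G_c(f_0,\ldots,f_k)$.
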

\begin{proof}
Recall that the algebra $\mathcal{A}_G(M)$ is constructed from the choice of $\mathcal{A}(G)\subset C^*_r(G)$ by the slice theorem: an invariant kernel $k$ 
belongs to $\mathcal{A}_G(M)$ if the function
\[
\tilde{k}(g,s_1,s_2):=k(s_1,gs_2)
\]
belongs to $(\mathcal{A}(G)\hat{\otimes}\Psi^{-\infty}(S,E|_S))^{K\times K}$. These functions $\tilde{k}_i(g_i,x_i,x_{i+1})$,~$i=0,\ldots n-1$ and $\tilde{k}_n((g_1\cdots g_n)^{-1},x_n,x_0)$ are used in the formula \eqref{cca} for the cocycle $\tau^M_c$. Since the cut-off function $\chi$ has compact support, performing the integrations over $M$ in 
equation \eqref{cca}, we end up with the pairing of an element in $\mathcal{A}(G)^{\otimes(k+1)}$ with the group cocycle $c$ as defined in \eqref{ccg}. But then it is clear that $\tau^M_c$ is well-defined on $\mathcal{A}_G(M)$ if and only if $\tau^G_c$ is well-defined on $\mathcal{A}(G)$.
\end{proof}
For the following, recall from \S \ref{egc} the explicit form \eqref{cadf} of the van Est isomorphism mapping a closed invariant form $\alpha\in \Omega^k_{\rm inv}(G\slash K)$ to a smooth group cocycle $J(\alpha)\in C^k_{\rm diff}(G)$. For notational convenience,
we will drop the $J$ in the description of the associated cyclic cocycles, writing $\tau^G_\alpha$ and $\tau^M_\alpha$ instead of $\tau^G_{J(\alpha)}$ and $\tau^M_{J(\alpha)}$.
\begin{proposition}
Let $G$ be a Lie group with finitely many connected components and satisfying the rapid decay property (RD). Assume that $G/K$ is of non-positive sectional curvature. Then
the cocycle $\tau^G_\alpha$ associated to a closed invariant differential form $\alpha\in \Omega^k_{\rm inv}(G\slash K)$ extends continuously to $H^\infty_L (G)$. Consequently, the cyclic cocycles $\tau^M_\alpha$ extends to $\mathcal{S}^\infty_G (M)$. 
 \end{proposition}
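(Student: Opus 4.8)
The plan is to prove a continuity estimate for $\tau^G_\alpha$ directly in the Fréchet seminorms $\nu_m$ of $H^\infty_L(G)$, and then to read off the statement for $\tau^M_\alpha$ from Proposition~\ref{prop:2-cocycles}. First I would record two elementary facts about the group cocycle $c:=J(\alpha)$. Choosing the left-invariant metric on $G$ to be the one induced by $\langle\cdot,\cdot\rangle$ makes the projection $G\to G/K$ a Riemannian submersion, hence $1$-Lipschitz, so that $d(g)\le L(g)$ for all $g\in G$; feeding this into Theorem~\ref{cocycle:pol-est} and using $d(e)=0$ bounds
\[
|c(e,g_1,g_1g_2,\ldots,g_1\cdots g_k)|\le C\prod_{j=1}^{k}\bigl(1+L(g_1\cdots g_j)\bigr)^{k}.
\]
Then subadditivity of the length function, $1+L(g_1\cdots g_j)\le\prod_{i=1}^{j}(1+L(g_i))$, bounds the right-hand side by $C\prod_{i=1}^{k}(1+L(g_i))^{k^2}$.

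Next I would substitute this into the defining formula \eqref{ccg} for $\tau^G_\alpha=\tau^G_c$. Setting $b_i:=(1+L)^{k^2}|a_i|$ for $1\le i\le k$ and $b_0:=|a_0|$ gives
\[
|\tau^G_\alpha(a_0,\ldots,a_k)|\le C\int_{G^{\times k}}b_1(g_1)\cdots b_k(g_k)\,b_0\bigl((g_1\cdots g_k)^{-1}\bigr)\,dg_1\cdots dg_k=C\,\bigl(b_1*\cdots*b_k*b_0\bigr)(e),
\]
the last equality being the standard identification of an iterated integral of this shape with an iterated convolution evaluated at the identity (valid because $G$ is unimodular, which holds since $G$ has property RD). By Cauchy--Schwarz and unimodularity, $(b_1*\cdots*b_k*b_0)(e)\le\|b_1*\cdots*b_k\|_{L^2}\,\|b_0\|_{L^2}$. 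Property RD says that left convolution gives a continuous inclusion $H^\infty_L(G)\hookrightarrow C^*_r(G)$, so there exist $C_0>0$ and $s\ge0$ with $\|f*h\|_{L^2}\le C_0\,\nu_s(f)\,\|h\|_{L^2}$; iterating this $k-1$ times and using $\|\cdot\|_{L^2}=\nu_0\le\nu_s$ yields $\|b_1*\cdots*b_k\|_{L^2}\le C_0^{\,k-1}\prod_{i=1}^{k}\nu_s(b_i)$. Since $\nu_s(b_i)=\nu_{s+k^2}(a_i)$ for $i\ge1$ and $\|b_0\|_{L^2}=\nu_0(a_0)\le\nu_{s+k^2}(a_0)$, this produces
\[
|\tau^G_\alpha(a_0,\ldots,a_k)|\le C'\prod_{i=0}^{k}\nu_{s+k^2}(a_i)\qquad\text{for all }a_0,\ldots,a_k\in C^\infty_c(G),
\]
and since $C^\infty_c(G)$ is dense in $H^\infty_L(G)$ this extends $\tau^G_\alpha$ to a continuous multilinear functional on $H^\infty_L(G)$; continuity of the Hochschild and Connes differentials (and of the algebra inclusion) then shows the extension is again a cyclic cocycle.

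For the final clause I would take $\mathcal{A}(G):=H^\infty_L(G)$, which is admissible by Proposition~\ref{prop:semis} and for which the associated algebra of invariant operators is exactly $\mathcal{S}^\infty_G(M)$. The proof of Proposition~\ref{prop:2-cocycles} reduces $\tau^M_c$, after integrating out the $M$-variables against the compactly supported cut-off $\chi$, to the pairing of an element of $\mathcal{A}(G)^{\otimes(k+1)}$ with $\tau^G_c$; hence the continuity of $\tau^G_\alpha$ on $H^\infty_L(G)$ just established forces $\tau^M_\alpha$ to extend to $\mathcal{S}^\infty_G(M)$. The hard part is the first step: one must transfer the polynomial weights that the geodesic-simplex cocycle attaches to the composite arguments $g_1\cdots g_j$ onto the individual variables $g_i$ --- which is what inflates the seminorm degree from $k$ to roughly $k^2$ --- while arranging that no weight lands on the $a_0$-slot, so that the RD convolution inequality applies with only an $L^2$ bound on $b_0$.
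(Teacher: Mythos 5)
Your proposal is correct and follows essentially the same route as the paper: bound the group cocycle $J(\alpha)$ polynomially via Theorem \ref{cocycle:pol-est}, absorb the weights into the entries, recognize the resulting integral as an iterated convolution evaluated at $e$, and invoke property (RD) to dominate everything by the seminorms $\nu_m$, with the $\tau^M_\alpha$ statement then read off from Proposition \ref{prop:2-cocycles}. The only cosmetic differences are that you redistribute the weights onto the individual variables (exponent $k^2$ instead of $k$, which is immaterial) and control the evaluation at $e$ by a Cauchy--Schwarz step plus the $L^2$-operator-norm bound, where the paper instead uses the trace/$C^*_r$-norm estimate $|f(e)|\leq \|f\|_{C^*_r(G)}$ together with submultiplicativity.
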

\begin{proof}
Recall the definition of the smooth group cocycle $J(\alpha)\in C^k_{\rm diff}(G)$  defined in \eqref{cadf}, satisfying the polynomial estimates of Theorem \ref{cocycle:pol-est}. This, together with the rapid decay property of $G$, ensures we can follow the line of proof
of \cite[Prop. 6.5]{CM}, where the analogous extension property is proved for certain discrete groups.
To show that the cyclic cocycle $\tau_\alpha$ extends continuously to the algebra $H^\infty_L (G)$, we need to show that it is bounded with respect to the seminorm $\nu_k$ in \eqref{seminorm} defining the Fr\'ech\`et topology, for some $k\in\mathbb{N}$. 
Let $a_0,\ldots,a_k\in H^\infty_L (G)$, and write $\tilde{a}_0:=|a_0|$, $\tilde{a}_i(g):=(1+d(g))^k|a_i(g)|,~i=1,\ldots, k$. Then we can make the following estimate:
\begin{align*}
|\tau^G_\alpha(a_0,\ldots,a_k)|\leq& C \int_{G^{\times k}}(1+d(g_1))^{k}\cdots (1+d(g_k))^{k}|a_0((g_1\cdots g_k)^{-1})|\cdot |a_1(g_1)|\cdots |a_k(g_k)|dg_1\cdots dg_k\\
=&C (\tilde{a}_0*\ldots * \tilde{a}_k)(e)\\
\leq &C || \tilde{a}_0*\ldots * \tilde{a}_k||_{C^*_r(G)}\\
\leq & C||  \tilde{a}_0||_{C^*_r(G)}\cdots ||\tilde{a}_k||_{C^*_r(G)}\\
\leq & CD^{k+1} \nu_p(\tilde{a}_0)\cdots \nu_p(\tilde{a}_k) =CD^{k+1}\nu_{p+k}(a_0)\cdots \nu_{p+k}(a_k).
\end{align*}
In this computation we have used the fact that the Plancherel trace $a\mapsto a(e)$ on the convolution algebra has a continuous extension to $C^*_r(G)$, together with the rapid decay property: $||a||_{C^*_r(G)}\leq D||(1+d)^p a||_{L^2}$, for some $p$. Altogether, this proves the proposition.
\end{proof}
\subsection{Pairing with $K$-theory}
Cyclic cohomology was originally developed by Connes to pair with $K$-theory via the Chern character. Let us recall this construction: let $\tau=(\tau_0,\tau_2,\ldots,\tau_{2k})\in CC^{2k}(A)$ be a cyclic cocycle of 
degree $2k$ on a unital algebra $A$, and $[p]-[q]$ an element in $K_0(A)$ represented by idempotents $p,q\in M_N(A)$. The number
\[
\left<[p]-[q],\tau\right>:=\sum_{n=0}^{k}(-1)^n\frac{(2n)!}{n!}\left(\tau_{2n}\left(\tr(p-\frac{1}{2},p,\ldots,p)\right)-\tau_{2n}\left(\tr(q-\frac{1}{2},q,\ldots,q)\right)\right),
\]
where $\tr:M_N(A)^{\otimes(n+1)}\to A^{\otimes(n+1)}$ is the generalized matrix trace, is well-defined and depends only on the (periodic) cyclic cohomology class of $\tau$.
\begin{proposition}\label{prop-morita-cocycles}
Let $c$, $\mathcal{A} (G)$ and $\mathcal{A}_G (M)$ as in Proposition \ref{prop:2-cocycles}, and assume that $\tau^G_c$, 
and therefore $\tau^M_c$, extends. Then, under the Morita isomorphism $\mathcal{M}:K_0(C^*(M,E)^G)\stackrel{\cong}{\longrightarrow} K_0(C^*_r(G))$, we have the equality:
\[
\left<[p]-[q],\tau^M_c\right>=\left<\mathcal{M}([p]-[q]),\tau^G_c\right>.
\]
\end{proposition}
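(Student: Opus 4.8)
The plan is to verify the identity by unwinding the definition of the Morita isomorphism $\mathcal{M}$ via the partial trace map $\Tr_S\colon \mathcal{A}_G(M,E)\to\mathcal{A}(G)$ from Subsection \ref{subsect:holo}, and comparing, term by term, the two cyclic-cocycle pairings. Concretely, $\mathcal{M}$ is induced on $K$-theory by the algebra homomorphism (or rather the correspondence) built from $\Tr_S$; so for an idempotent $p\in M_N(\mathcal{A}_G(M,E))$ the class $\mathcal{M}([p]-[q])\in K_0(\mathcal{A}(G))$ is represented, up to the usual stabilization, by $(\Tr_S)_*(p)-(\Tr_S)_*(q)$, where $\Tr_S$ is applied entrywise together with the generalized matrix trace. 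Since both $\tau^M_c$ and $\tau^G_c$ are (extensions of) cyclic cocycles and the pairing with $K_0$ depends only on the periodic cyclic cohomology class, it suffices to prove the cochain-level identity
\[
\tau^M_c(k_0,\dots,k_n)=\tau^G_c(\Tr_S k_0,\dots,\Tr_S k_n)
\]
for all $k_0,\dots,k_n\in\mathcal{A}_G(M,E)$ of the special product form $f_i\otimes a_i\in(\mathcal{A}(G)\hat\otimes\Psi^{-\infty}(S,E|_S))^{K\times K}$, and then extend by continuity and (multi)linearity; this automatically yields the pairing identity, because one may compute $\left<[p]-[q],\tau^M_c\right>$ using the representatives $p,q$ pushed through $\Tr_S$.

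The computation itself is essentially the one already carried out in the proof of Proposition \ref{prop:2-cocycles}. Substituting $k_i(x,y)=\kappa_{f_i,a_i}(x,y)=g\,\widetilde k_i(g^{-1}g',s,s')\,g'^{-1}$ into formula \eqref{cca}, one changes variables along the fibration $M\cong G\times_K S$, writes each $x_j=g_j s_j$, and uses the cut-off function $\chi$: because $\chi$ has compact support and $\int_G\chi(g^{-1}x)\,dg=1$, the integrations over the $S$-variables produce exactly the functional-analytic trace $\int_S\tr a_i(s,s)\,ds=\Tr(T_{a_i})$ appearing in the definition of $\Tr_S$, while the integrations over the $G$-variables reproduce the group-cocycle pairing \eqref{ccg} of $c$ against $\Tr_S k_0\otimes\cdots\otimes\Tr_S k_n\in\mathcal{A}(G)^{\otimes(n+1)}$. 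This is precisely the bookkeeping that Hochs--Wang perform to show $\Tr_S$ induces $\mathcal{M}$; the present statement is the cocycle-level refinement, and it is stated (in the discrete-group case) in \cite{ppt1,PPT}, so the argument there transfers verbatim.

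The main obstacle is bookkeeping rather than conceptual: one must be careful that $\Tr_S$ as defined is only a trace-like map and not an algebra homomorphism, so the identification $\mathcal{M}([p])=(\Tr_S)_*([p])$ at the level of $K$-theory requires the standard excision/Morita argument (identifying $\mathcal{A}_G(M,E)$ with $\mathcal{A}(G)\otimes\mathbb{K}$ up to holomorphic closure and using that $\Tr_S$ corresponds to $\id\otimes\Tr$), together with the fact, established in Subsection \ref{subsect:holo} and Proposition \ref{lemma:HC-holoclosed}, that all the algebras in play are holomorphically closed so that their $K$-theories coincide with those of the ambient $C^*$-algebras. Once the diagram
\[
\xymatrix{K_0(\mathcal{A}_G(M,E))\ar[r]^-{(\Tr_S)_*}\ar[d]_{\cong}&K_0(\mathcal{A}(G))\ar[d]^{\cong}\\ K_0(C^*(M,E)^G)\ar[r]^-{\mathcal{M}}&K_0(C^*_r(G))}
\]
is in place, the stated equality of pairings follows by combining it with the cochain-level identity above and the naturality of the Connes pairing under the generalized matrix trace.
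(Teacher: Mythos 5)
Your overall strategy (unwind $\mathcal{M}$ through the partial trace $\Tr_S$ and compute the pairing directly) is the paper's strategy, but the key intermediate step you rely on is false, and this is a genuine gap rather than a bookkeeping issue.

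The cochain-level identity you propose,
\[
\tau^M_c(k_0,\dots,k_n)=\tau^G_c(\Tr_S k_0,\dots,\Tr_S k_n),
\]
does not hold for $n\geq 1$, even for elementary tensors $k_i=f_i\otimes a_i$. In formula \eqref{cca} the kernel $k_i$ is evaluated at the pair $(x_i,g_{i+1}x_{i+1})$, so after passing to slice coordinates the $S$-dependence of consecutive factors is \emph{coupled}: the $S$-integrations produce
\[
\int_{S^{\times(n+1)}}\tr\bigl(a_0(s_0,s_1)a_1(s_1,s_2)\cdots a_n(s_n,s_0)\bigr)\,ds_0\cdots ds_n=\Tr\bigl(T_{a_0}\cdots T_{a_n}\bigr),
\]
the trace of the \emph{product}, whereas the right-hand side of your identity carries the product of the individual traces $\prod_i\Tr(T_{a_i})$ coming from $\Tr_S(f_i\otimes a_i)=f_i\,\Tr(T_{a_i})$. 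These agree for $n=0$ but not in general, so the identity cannot be "extended by multilinearity and continuity" — both sides are already multilinear, and they simply differ on product elements. What saves the statement is precisely that the $K$-theory pairing only ever evaluates the cocycle on a \emph{single} idempotent repeated in every slot: reducing by Morita invariance to $e=e_1\otimes e_2$ with $e_2$ an idempotent (normalized to rank one), one gets $\Tr(e_2^{\,n+1})=\Tr(e_2)=\Tr(e_2)^{n+1}$, and only then do the two pairings coincide. This is exactly the step in the paper's proof flagged by the remark "we have used the fact that $e_2^2=e_2$"; your argument never invokes idempotency, which is the essential ingredient.

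A secondary point: you assert that the compact support of $\chi$ together with $\int_G\chi(g^{-1}x)\,dg=1$ converts the $M^{\times(n+1)}$-integrals into $S^{\times(n+1)}$-integrals. As written this is not automatic, because \eqref{cca} contains one factor $\chi(x_i)$ per variable and the slice decomposition $M\cong G\times_K S$ does not make these factors disappear. The paper handles this by exploiting the independence of the pairing from the choice of cut-off and passing to the degenerating family $\chi_\epsilon\to\chi_S$ of Lemma \ref{cut-off:family}, which localizes the integrals to the slice. You would need this (or an equivalent device) to justify the reduction to $S$-integrals.
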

\begin{proof}
Recall that the isomorphism $\mathcal{M}:K(C^*(M,E)^G)\to K(C^*_r(G))$ is implemented by the partial trace map $\Tr_S:\mathcal{A}_G(M,E)\to \mathcal{A}(G)$ on the respective dense subalgebras. By the abstract Morita isomorphism $\mathcal{M}$, it suffices to consider a simple idempotent $e=e_1\otimes e_2\in M_n(\mathcal{A}_G(M,E))$
%\red{$\simeq (\mathcal{A}(G)\hat{\otimes}\Psi^{-\infty}(S,E|_S))^{K\times K}$}. 
so that $\Tr_S(e)=\Tr_S(e_2)e_1$ yields an idempotent in $M_n(\mathcal{A}(G))$, where we have extended $\Tr_S$ to matrix algebras in the usual way by combining with the matrix trace.

Because we know that the cyclic cohomology class
of $\tilde{\tau}_c$ is independent of the choice of a cut-off function, the pairing with $K$-theory does not depend on this choice either so we can choose the family $\chi_\epsilon$ constructed in Lemma \ref{cut-off:family} and take the limit as $\epsilon\downarrow 0$:
\begin{align*}
\left<[e],\tau^M_c\right>=&\lim_{\epsilon\downarrow 0} \frac{(2k)!}{k!} \int_{G^{\times k}}\int_{M^{\times (k+1)}}\chi_\epsilon(x_0)\cdots\chi_\epsilon(x_n)e
(x_0,g_1x_1)\cdots e(x_n,(g_1\cdots g_n)^{-1}x_0)\\
&\hspace{5cm} c(e,g_1,g_1g_2,\ldots,g_1\cdots  g_n) dx_0\cdots dx_ndg_1\cdots dg_n,\\
=&
 \frac{(2k)!}{k!} \int_{G^{\times k}}\int_{S^{\times (k+1)}} e(x_0,g_1x_1)\cdots e(x_n,(g_1\cdots g_n)^{-1}x_0)\\
&\hspace{5cm} c(e,g_1,g_1g_2,\ldots,g_1\cdots  g_n) dx_0\cdots dx_ndg_1\cdots dg_n,\\
=& \frac{(2k)!}{k!} \Tr_S(e_2\cdots e_2) \int_{G^{\times k}} e_1(g_1)\cdots e_1((g_1\cdots g_n)^{-1})c(e,g_1,g_1g_2,\ldots,g_1\cdots  g_n)dg_1\cdots dg_n,\\
=&\left<[\mathcal{M}(e)],\tau^G_c\right>,
\end{align*}
where, to go to the last line, we have used the fact that $e_2^2=e_2$ is an idempotent. This completes the proof.
\end{proof}

\section{Higher $C^*$-indices and geometric applications}

\subsection{Higher $C^*$-indices and the index formula}
Let $M$ and $G$ be as above, with $M$ even dimensional. 
Hence $G$ is a unimodular Lie group with $|\pi_0 (G)|<\infty$.
(For the time being we do not put additional hypothesis on $G$.)
Let $E$ be an equivariant complex vector bundle.
Consider an odd $\ZZ_2$-graded  Dirac type operator $D$ acting on the sections of $E$.
We have then defined the compactly supported
index class $\Ind_c (D)\in K_0 (\mathcal{A}^c_G (M,E))$. Let $\alpha\in H^{{\rm even}}_{{\rm diff}} (G)$ and let $\Psi_M (\alpha)
\in HC^{{\rm even}} (\mathcal{A}_G^c (M,E))$ be 
the cyclic cohomology class corresponding to $\alpha$. 
We know that, in general, we have a pairing
\begin{equation}\label{pairing}
K_0 (\mathcal{A}^c_G (M,E))\times HC^{{\rm even}}(\mathcal{A}_G^c (M,E))\longrightarrow \CC
\end{equation}
We thus obtain, through $\Psi_M:H^\bullet_{\rm diff}(G)\to HC^\bullet(\mathcal{A}_G^c (M,E))
$,  a pairing 
\begin{equation}\label{pairing-bis}
K_0 (\mathcal{A}^c_G (M,E))\times H^{{\rm even}}_{{\rm diff}} (G) \longrightarrow \CC
\end{equation}
In particular, by  pairing $\Ind_c (D)\in K_0 (\mathcal{A}^c_G (M,E))$ with $\alpha\in H^{{\rm even}}_{{\rm diff}} (G)$
%(through
%$\Psi_M (\alpha)\in HC^* (\mathcal{A}_G^c (M,E))$), 
we  obtaining the {\it higher indices}
\begin{equation*}
\Ind_{c,\alpha} (D):=\left<\Ind_c (D),\Psi_M (\alpha)\right>\,,\quad \alpha\in  H^{{\rm even}}_{{\rm diff}} (G).
\end{equation*}
On the other hand, we can also take the image of $\alpha$ through the van Est map $\Phi_M: 
H^\bullet_{{\rm diff}} (G)\to H^\bullet_{{\rm inv}} (M)$; recall that this is nothing but the pull-back through the classifying map $\psi_M:
M\to G/K$ once we identify $H^\bullet_{{\rm diff}} (G)$ with $H^\bullet_{{\rm inv}} (G/K)$.
The following theorem is proved in \cite{PPT}:
\begin{theorem} [Pflaum-Posthuma-Tang, c.f.\ \cite{PPT}]
 Let $M$, $G$ and $D$ as above. In particular, $M$ is even dimensional.
Let $\alpha\in H^{{\rm even}}_{{\rm diff}} (G)$. Then the following identity holds true:
\begin{equation}\label{PPT-main}
 \Ind_{c,\alpha} (D)=\int_M \chi_M (m) \,{\rm AS} (M)\wedge \Phi_M (\alpha)
\end{equation}
with ${\rm AS}(M)$ the Atiyah-Singer integrand on $M$: ${\rm AS}(M):=\hat{A}(M,\nabla^M)\wedge\Ch^\prime (E,\nabla^E)$.\\
Equivalently, 
\begin{equation}\label{PPT-main-bis}
 \Ind_{c,\alpha} (D)=\int_M \chi_M (m) \,{\rm AS} (M)\wedge \psi_M^* (\alpha)
 \end{equation}
if we identify $H^\bullet_{{\rm diff}} (G)$
and 
$H^\bullet_{{\rm inv}} (G/K)$ via the van Est isomorphism, c.f.\ Remark \ref{van Est}.
\end{theorem}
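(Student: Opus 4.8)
The plan is to follow \cite{PPT}. The first move is to rewrite the higher index $\Ind_{c,\alpha}(D)=\langle \Ind_c(D),\Psi_M(\alpha)\rangle$ as the explicit pairing of the Connes--Skandalis class $\Ind_c(D)\in K_0(\mathcal{A}^c_G(M,E))$ with the cyclic cocycle $\tau^M_{J(\alpha)}$ of \eqref{cca}. The structural point is that $\tau^M_{J(\alpha)}$ is assembled from purely local data --- the cut-off $\chi_M$ and the Dupont cocycle $J(\alpha)$ of \eqref{cadf}, itself an integral of the invariant form $\alpha$ over geodesic simplices in $G/K$ --- so the pairing should only feel the principal symbol of $D$ and the de Rham class $\psi_M^*(\alpha)$.

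First I would localize at the symbol level. Using the global slice $M\cong G\times_K S$, the algebra $\mathcal{A}^c_G(M,E)=\Psi^{-\infty}_{G,c}(M,E)$ of smoothing operators of $G$-compact support, together with its extension $\Psi^0_{G,c}(M,E)$ by $G$-invariant order-zero pseudodifferential operators of $G$-compact support, is realized inside the convolution algebra of the proper action groupoid $G\ltimes M$. The principal symbol gives a short exact sequence $0\to \Psi^{-\infty}_{G,c}(M,E)\to \Psi^0_{G,c}(M,E)\xrightarrow{\sigma} C^\infty(S^*M)^G\to 0$, and by the very construction of $P_\sigma$ from a parametrix with smoothing remainders, $\Ind_c(D)$ is the image of the symbol class (equivalently a compactly supported $G$-equivariant $K$-theory class on $T^*M$) under the connecting map in $K$-theory. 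Thus the problem becomes the computation of the pairing of a cyclic cocycle on the smoothing ideal with a class coming from $T^*M$.

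The core of the argument is the algebraic index theorem. The symbol calculus endows the convolution algebra of $G\ltimes M$ with a deformation-quantization structure: up to Morita equivalence it is the algebra of $G$-invariant, $G$-compactly supported sections of the formal Weyl-algebra bundle over $T^*M$. The Fedosov--Nest--Tsygan algebraic index theorem --- in the $G$-equivariant, non-compact-base form developed in \cite{PPT} --- computes the pairing of a $K$-theory class of this deformation with the cyclic cocycle attached to a closed invariant form as $\int_{T^*M}\chi_M\,\hat{A}\wedge\ch\wedge(\text{the pulled-back form})$; integration along the fibres of $T^*M\to M$ turns this into $\int_M\chi_M\,\hat{A}(M,\nabla^M)\wedge\Ch^\prime(E,\nabla^E)\wedge\psi_M^*(\alpha)$. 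To finish, one identifies the de Rham class produced by the trace-density computation with $\Phi_M(\alpha)$: this is forced by the fact that $\tau^M_{J(\alpha)}$ is defined through Dupont's quasi-inverse $J$ of the van Est map together with the functoriality of van Est along $\psi_M$, i.e.\ the identity $\Phi_M=\psi_M^*$, which yields both displayed forms of the theorem at once.

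I expect the algebraic index theorem step, in exactly this generality, to be the main obstacle. One must run the Fedosov/Nest--Tsygan machinery for the convolution algebra of a proper, cocompact action of a \emph{non-compact} Lie group, which requires: (i) constructing an equivariant Fedosov connection and trace density on the Weyl-algebra bundle over $T^*M$; (ii) inserting the cut-off $\chi_M$ consistently so that the otherwise divergent $G$-integrals and operator traces converge and the trace is a genuine closed graded trace on the convolution algebra; and (iii) passing from the abstract characteristic-class form of the theorem to the concrete de Rham integral, with matching normalizations. A more routine secondary point is that the Connes--Skandalis representative $P_\sigma$ may be replaced by the Connes--Moscovici idempotent $V_D$, the graph projection $e_D$, or the Moscovici--Wu projection without affecting the pairing with $\tau^M_{J(\alpha)}$ --- useful if one instead prefers the analytic route via Getzler rescaling of the heat-kernel representatives.
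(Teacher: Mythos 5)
The paper does not prove this theorem itself---it is imported from \cite{PPT} with a bare citation---and your sketch correctly reproduces the strategy of that reference: realize $\Ind_c(D)$ as a (symbolic) class paired against $\tau^M_{J(\alpha)}$, compute the pairing by the $G$-equivariant algebraic index theorem of Fedosov--Nest--Tsygan type for the convolution algebra of the proper groupoid with the cut-off $\chi_M$ inserted in the trace density, and identify the resulting de Rham class with $\Phi_M(\alpha)=\psi_M^*(\alpha)$ via the van Est functoriality, exactly the route the paper itself attributes to \cite{PPT} ("this employs the algebraic index theorem in a fundamental way"). Since the step you flag as the main obstacle is precisely the content of \cite{PPT}, your proposal is consistent with how the paper treats this statement, which likewise cites rather than reproves it.
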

We now make the fundamental assumption that $G$ satisfies the rapid decay property
and that $G/K$ is of non-positive sectional curvature. Let $\mathcal{S}^\infty_G (M,E)\subset C^* (M,E)^G$ be 
the dense
holomorphically closed subalgebra defined by the rapid decay algebra $H^\infty_L (G)\subset C^*_r (G)$. 
Thanks to the results of the previous Section we can extend the pairing \eqref{pairing-bis} to a pairing
\begin{equation}\label{pairing}
K_0 (\mathcal{S}^\infty_G (M,E))=K_0 (C^* (M,E)^G)\times H^{{\rm even}}_{{\rm diff}} (G)\longrightarrow \CC
\end{equation}
obtaining in this way the {\it higher $C^*$-indices} of $D$, denoted $\Ind_{\alpha} (D)$.
These numbers are well defined and can be computed by choosing a suitable representative
of the class $\Ind (D)\in K_0 (C^* (M,E)^G)$. Choosing the Connes-Skandalis projector we can apply again
the index formula of Pflaum-Posthuma-Tang, obtaing for each $\alpha\in H^{{\rm even}}_{{\rm diff}} (G)$ the $C^*$-index formula
\begin{equation}\label{PPT-main-C*}
 \Ind_{\alpha} (D)=\int_M  \chi_M (m) \,{\rm AS} (M)\wedge \Phi_M (\alpha)\,.
\end{equation}
Notice that  we also have a pairing
\begin{equation}\label{pairing-G}
K_0 (C^\infty_c (G))\times HC^{{\rm even}} (C^\infty_c (G)) \longrightarrow \CC
\end{equation}
and thus, through the homomorphism $\Psi_G: H^\bullet_{{\rm diff}}(G)\to HC^* (C^\infty_c (G))$,
a pairing 
\begin{equation}\label{pairing-G-bis}
K_0 (C^\infty_c (G))\times H^{{\rm even}}_{{\rm diff}}(G) \longrightarrow \CC
\end{equation}
According to the results of the previous section this pairing extends to a pairing
\begin{equation}\label{pairing-G-ter}
K_0 (C^*_r (G))\times H^{{\rm even}}_{{\rm diff}}(G) \longrightarrow \CC
\end{equation}
if $G$ satisfies (RD). In particular, we can define the $C^*_r (G)$-indeces $\Ind_{C^*_r(G),\alpha} (D)$
by pairing $\Ind_{C^*_r (G)} (D)\in K_0 (C^*_r (G))$ with $\alpha\in H^{{\rm even}}_{{\rm diff}}(G)$.
Moreover, from Proposition \ref{prop-morita-cocycles} we get the following equality:
\begin{equation}\label{indexG=index}
\left<\Ind(D),\Psi_M (\alpha)\right>=\left<\Ind_{C^*_r (G)} (D),\Psi_G (\alpha)\right>
\end{equation}
which means that 
\begin{equation}\label{indexG=index-bis}
\Ind_{C^*_r (G),\alpha} (D)=\Ind_\alpha (D)\quad\forall \alpha\in H^{{\rm even}}_{{\rm diff}}(G)%=\int_M  \chi_M (m) \,{\rm AS} (M)\wedge \Phi_M (\alpha)\,.
\end{equation}
and thus, thanks to \eqref{PPT-main-C*}, we can state the following fundamental result:

\begin{theorem}
Let $G$ be a Lie group satisfying the properties stated in the introduction: $|\pi_0 (G)|<\infty$, (RD) and $\underline{E}G$ of non-positive curvature.
Let $\alpha\in H^{{\rm even}}_{{\rm diff}} (G)$. Then there is a well-defined associated higher $C^*_r (G)$-index
$\Ind_{C^*_r (G),\alpha} (D)$
 and the following 
formula holds:
\begin{equation}\label{indexG=index-ter}
\Ind_{C^*_r (G),\alpha} (D)=\int_M  \chi_M (m) \,{\rm AS} (M)\wedge \Phi_M (\alpha)\,.
\end{equation}
Equivalently, if we identify $H^\bullet_{{\rm diff}}(G)$ and 
$H^\bullet_{{\rm inv}} (G/K)\equiv H^\bullet_{{\rm inv}} (\underline{E}G)$ via the van Est isomoprhism,  then
 $$\Ind_{C^*_r (G),\alpha} (D)=\int_M  \chi_M (m) \,{\rm AS} (M)\wedge \psi_M^* \alpha.$$
\end{theorem}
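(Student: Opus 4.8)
The plan is to assemble the statement from the ingredients already developed in the preceding sections; essentially all of the analytic work has been done, and what remains is to chain the identifications together carefully and to check their compatibility.

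First I would settle well-definedness. Since $G$ has property (RD) it is unimodular, so the van Est construction of the cyclic cocycles $\tau^G_\alpha$ on $C^\infty_c(G)$ and $\tau^M_\alpha$ on $\mathcal{A}^c_G(M,E)$ is available. By Theorem \ref{cocycle:pol-est} the group cocycle $J(\alpha)$ representing $\alpha$ has polynomial growth, and the extension proposition proved just above then gives that $\tau^G_\alpha$ extends continuously to $H^\infty_L(G)$ and, consequently, $\tau^M_\alpha$ extends to $\mathcal{S}^\infty_G(M,E)$. Because $\mathcal{S}^\infty_G(M,E)$ is dense and holomorphically closed in $C^*(M,E)^G$ (Proposition \ref{prop:semis}, via Proposition \ref{lemma:HC-holoclosed}), the inclusion induces an isomorphism $K_0(\mathcal{S}^\infty_G(M,E))\cong K_0(C^*(M,E)^G)$, so pairing the extended $\tau^M_\alpha$ with $\Ind(D)$ produces a well-defined number $\Ind_\alpha(D)$. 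Transporting through the Morita isomorphism $\mathcal{M}$ and using the extension of $\tau^G_\alpha$ to $H^\infty_L(G)\subset C^*_r(G)$ defines $\Ind_{C^*_r(G),\alpha}(D)$, and Proposition \ref{prop-morita-cocycles} identifies the two, which is exactly \eqref{indexG=index-bis}.

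Next I would reduce the $C^*$-level pairing to the compactly supported one. The point is that the Connes--Skandalis idempotent is built from a parametrix of $G$-compact support, so $\Ind_c(D)=[P_\sigma]-[e_1]\in K_0(\mathcal{A}^c_G(M,E))$, and $\Ind(D)\in K_0(C^*(M,E)^G)$ is by construction the image of $\Ind_c(D)$ under $\mathcal{A}^c_G(M,E)\hookrightarrow\mathcal{S}^\infty_G(M,E)\hookrightarrow C^*(M,E)^G$. Since the extended cocycle on $\mathcal{S}^\infty_G(M,E)$ restricts on $\mathcal{A}^c_G(M,E)$ to the original $\tau^M_\alpha$, naturality of the Chern--Connes pairing under this inclusion gives $\Ind_\alpha(D)=\langle\Ind_c(D),\Psi_M(\alpha)\rangle=\Ind_{c,\alpha}(D)$. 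Then I would invoke the index theorem of \cite{PPT} (the theorem cited above) to compute $\Ind_{c,\alpha}(D)=\int_M\chi_M\,{\rm AS}(M)\wedge\Phi_M(\alpha)$, and chain the identities to get $\Ind_{C^*_r(G),\alpha}(D)=\Ind_\alpha(D)=\Ind_{c,\alpha}(D)=\int_M\chi_M\,{\rm AS}(M)\wedge\Phi_M(\alpha)$. The equivalent form then follows from the corollary identifying $\Phi_M$ with the pull-back $\psi_M^*$ along the classifying map under the van Est isomorphism (Remark \ref{van Est}).

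The main obstacle is not any single hard estimate — those were all dispatched earlier (the polynomial growth bound of Theorem \ref{cocycle:pol-est}, continuity of the extended cyclic cocycles, the explicit partial-trace Morita comparison of Proposition \ref{prop-morita-cocycles}, and the algebraic index theorem of \cite{PPT}). The care required here is essentially bookkeeping: one must verify that the \emph{same} $K$-theory class is being paired throughout; that the cyclic cocycle extended to the holomorphically closed algebra genuinely restricts to $\tau^M_\alpha$ on the compactly supported subalgebra, so that the PPT computation of $\Ind_{c,\alpha}(D)$ applies verbatim to $\Ind_\alpha(D)$; and that the choice of the Connes--Skandalis representative (rather than the Connes--Moscovici, graph, or Moscovici--Wu representatives, which all define the same class in $K_0(C^*(M,E)^G)$) is immaterial for the value of the pairing. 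These points are formal but should be made explicit.
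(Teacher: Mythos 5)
Your proposal is correct and follows essentially the same route as the paper: extend $\tau^G_\alpha$ and $\tau^M_\alpha$ via the polynomial-growth estimate and (RD) to the holomorphically closed subalgebras, use Proposition \ref{prop-morita-cocycles} to identify $\Ind_{C^*_r(G),\alpha}(D)$ with $\Ind_\alpha(D)$, and compute the latter by pairing the $G$-compactly supported Connes--Skandalis representative with $\tau^M_\alpha$ via the Pflaum--Posthuma--Tang formula, finally rewriting $\Phi_M(\alpha)$ as $\psi_M^*\alpha$ through the van Est isomorphism. The only difference is expository: you spell out the restriction/naturality bookkeeping that the paper leaves implicit in the phrase ``choosing a suitable representative of the class.''
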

For $\alpha=1$, the associated cyclic cocycle \eqref{ccg} is just the Plancherel trace $\tau^G(f)=f(e)$ on $C^*_r(G)$, and the Theorem reduces to the $L^2$-index theorem first proved by Wang in \cite{Wang}.
Remark that in this case the trace extends to $C^*_r(G)$ without problems, so the assumptions on the curvature of $G/K$ and property (RD) are unnecessary.

\subsection{Higher signatures and their $G$-homotopy invariance}
Let $M$ and $N$ two orientable $G$-proper manifolds and let $f:M\to N$ be a $G$-homotopy equivalence. Let us denote by $D^{{\rm sign}}_M$ and 
$D^{{\rm sign}}_M$ the corresponding signature operators. Then, according to the main result in \cite{fukumoto2} we have that
\begin{equation}\label{HS}
\Ind_{C^*_r (G)} (D^{{\rm sign}}_M)=\Ind_{C^*_r (G)} (D^{{\rm sign}}_N)\quad\text{in}\quad K_0 (C^*_r (G))\,.
\end{equation}
Consequently, from \eqref{indexG=index-ter},
we obtain the following result, stated as item (i) in Theorem \ref{theo:main}
in the Introduction:

\begin{theorem}
Let $G$ be a Lie group satisfying the properties stated in the introduction: $|\pi_0 (G)|<\infty$, (RD) and $\underline{E}G$ of non-positive curvature.
Let $M$ and $N$ are two orientable  $G$-proper manifolds
and assume that there exists an orientation preserving $G$-homotopy equivalence 
between $M$ and $N$. Let us identify $H^\bullet_{{\rm diff}}(G)$ and 
$H^\bullet_{{\rm inv}} (G/K)\equiv H^\bullet_{{\rm inv}}(\underline{E}G)$ via the van Est isomorphism. 
Then. for each $\alpha\in  H^\bullet_{{\rm inv}}(\underline{E}G)$:
% $$\int_M  \chi_M (m) \,L(M)\wedge \Phi_M (\alpha) = \int_N  \chi_N (n) \ L(N)\wedge \Phi_N (\alpha)\,.$$
% Equivalently,  then
 $$\int_M  \chi_M (m) \,L(M)\wedge \psi_M^* \alpha= \int_N  \chi_N (n) \ L(N)\wedge \psi^*_N \alpha$$
% for each $\alpha\in H^\bullet_{{\rm inv}} (G/K)$.
 \end{theorem}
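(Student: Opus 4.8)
The plan is to reduce the statement to two ingredients already established above: the higher $C^*_r(G)$-index formula \eqref{indexG=index-ter}, specialised to the signature operator, and Fukumoto's $G$-homotopy invariance of the signature index class \eqref{HS}. The guiding idea is that the left-hand side of the asserted identity equals, up to a non-zero constant depending only on $\dim M$, the higher index $\Ind_{C^*_r(G),\alpha}(D^{{\rm sign}}_M)$; the right-hand side equals the analogous number for $N$; and these two numbers coincide because the underlying classes in $K_*(C^*_r(G))$ already coincide, by Fukumoto. First I would fix the orientation-preserving $G$-homotopy equivalence $f\colon M\to N$ and record that $\dim M=\dim N=:n$, since a homotopy equivalence of manifolds preserves dimension.

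I would treat the case of even $n$ in detail. The signature operator $D^{{\rm sign}}_M$ of the oriented $G$-proper cocompact manifold $M$ is a $G$-equivariant odd $\ZZ_2$-graded Dirac-type operator on $\Lambda^\bullet T^*M\otimes\CC$ with its chirality grading, so all the constructions of the preceding sections apply to it, and likewise for $N$. Under the standing hypotheses $|\pi_0(G)|<\infty$, property (RD) and non-positive curvature of $G/K$, the cyclic cocycle $\tau^G_\alpha$ extends to $C^*_r(G)$ for each $\alpha\in H^{\bullet}_{{\rm diff}}(G)$, so the map $\left<\,\cdot\,,\Psi_G(\alpha)\right>\colon K_0(C^*_r(G))\to\CC$ is defined; applying it to \eqref{HS} gives
\[
\Ind_{C^*_r(G),\alpha}(D^{{\rm sign}}_M)=\Ind_{C^*_r(G),\alpha}(D^{{\rm sign}}_N).
\]
Next I would apply \eqref{indexG=index-ter} to both sides. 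By the classical local index theorem for the signature operator, the Atiyah--Singer integrand ${\rm AS}(M)=\hat A(M,\nabla^M)\wedge\Ch^\prime(E,\nabla^E)$ attached to $D^{{\rm sign}}_M$ is the de Rham representative of the Hirzebruch $L$-class $L(M)$ appearing in \eqref{higher-s}, up to a non-zero constant $c=c(n)$ depending only on $n$ (degree by degree, on $n$ and the degree in question). Hence
\[
\Ind_{C^*_r(G),\alpha}(D^{{\rm sign}}_M)=c\int_M\chi_M\,L(M)\wedge\psi_M^*\alpha,
\]
and the same formula holds for $N$ with the \emph{same} constant $c$; combining this with the two previous displays finishes the case of even $n$. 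I would emphasise that it is never necessary to compare $\psi_M$ with $\psi_N\circ f$: the numbers $\Ind_{C^*_r(G),\alpha}(D^{{\rm sign}}_M)$ and $\Ind_{C^*_r(G),\alpha}(D^{{\rm sign}}_N)$ depend only on classes in $K_*(C^*_r(G))$ together with $\alpha$, and it is exactly those classes that Fukumoto's theorem identifies.

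For odd $n$ I would argue by suspension. Here both sides of the identity vanish for degree reasons unless $\alpha$ has odd degree, since $L(M)$ is concentrated in degrees divisible by $4$; for odd $\alpha$ one repeats the argument with $K_1(C^*_r(G))$ in place of $K_0$, using the odd-degree extension of $\tau^G_\alpha$ to $C^*_r(G)$, Fukumoto's $G$-homotopy invariance of the class $\Ind_{C^*_r(G)}(D^{{\rm sign}}_M)\in K_1(C^*_r(G))$, and the odd analogue of \eqref{indexG=index-ter}, which is obtained from the even case by crossing with a circle, as indicated in the introduction.

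The step I expect to require the most care is the bookkeeping in identifying ${\rm AS}(M)$ for the signature operator with $L(M)$: one must verify that the integrand $\hat A(M,\nabla^M)\wedge\Ch^\prime(E,\nabla^E)$ of \eqref{indexG=index-ter}, specialised to the Clifford module, connection and $\ZZ_2$-grading of $D^{{\rm sign}}_M$, really reproduces the de Rham form representing $L(M)$ used in \eqref{higher-s}, with a normalising constant depending on the dimension alone, and that the $C^*_r(G)$-index class used by Fukumoto in \cite{fukumoto2} coincides with the one feeding into \eqref{indexG=index-ter} under the standard comparison of the various descriptions of the signature index class. Beyond this, the proof is a purely formal assembly of the index formula and Fukumoto's invariance result.
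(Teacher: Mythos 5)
Your even-dimensional argument is the paper's own: pair Fukumoto's equality \eqref{HS} of signature index classes in $K_0(C^*_r(G))$ with the extended cocycle $\tau^G_\alpha$, evaluate both sides by the higher $C^*$-index formula \eqref{indexG=index-ter}, and observe that the Atiyah--Singer integrand of the signature operator is the $L$-form up to a nonzero constant determined only by the dimension and the degree of $\alpha$, hence the same on both sides; your remark that one never needs to compare $\psi_M$ with $\psi_N\circ f$ is also exactly the point of working at the level of $K_*(C^*_r(G))$. The divergence is in the odd-dimensional case. You appeal to a $K_1$-pairing, an odd-degree extension of $\tau^G_\alpha$ to $C^*_r(G)$, and an ``odd analogue'' of \eqref{indexG=index-ter}; none of these are developed in the paper, so as written your odd case rests on machinery you would still have to construct. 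The paper instead suspends the geometry rather than the formalism: it replaces $M$ by the even-dimensional $(G\times\ZZ)$-proper manifold $M\times\RR$ (with $\ZZ$ acting by translations), replaces $\alpha$ by $\beta=\alpha\otimes\Xi(d)$ with $d$ the degree-one class coming from $H^1(S^1)$, verifies that $\mathcal{S}^\infty_G(M)\hat{\otimes}\mathcal{S}_\ZZ(\RR)$ is a dense holomorphically closed subalgebra of $C^*(M\times\RR)^{G\times\ZZ}$ to which the pairing with $\Psi_{M\times\RR}(\beta)$ extends, shows via the Pflaum--Posthuma--Tang formula that pairing the suspended signature index class with $\beta$ recovers $\sigma(M,\alpha)$, and then applies Fukumoto to the $(G\times\ZZ)$-homotopy equivalent manifolds $M\times\RR$ and $N\times\RR$, so that only even-degree results are ever used. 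Since your ``crossing with a circle'' is precisely this suspension, your route is morally identical, but to make it rigorous you should carry the suspension out at the level of the manifold and the cohomology class, as the paper does, rather than asserting an odd index formula and odd extension theorem that have not been proved.
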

\begin{proof}
For even dimensional manifolds, this follows immediately from the previous discussion. For the odd-dimensional case we argue by suspension.
Thus, let $M$ be an orientable odd dimensional  $G$-proper manifold. We endow $M$ with a $G$-invariant riemannian metric $g_M$.
%when needed, we assume that on $M$ we can fix and we have fixed  a $G$-invariant spin structure.
Consider $\RR$ and the natural action of  $\ZZ$ on it by translations (this is a free, proper and cocompact action).
 Taking the product of $M$ and $\RR$ we  obtaining the even dimensional $(G\times\ZZ)$-proper manifold $M\times\RR$;
 it has compact quotient equal to $M/G\times S^1$. We endow $M\times\RR$ with the $(G\times\ZZ)$-invariant metric
$g_M+dt^2$. 
 Consider the dual group $T^1:= {\rm Hom}(\ZZ,U(1))$.
 The signature operator on $M\times\RR$ defines an  index classe
in  the group $K_0 (C^* (M\times \RR)^{G\times \ZZ})$, which is isomorphic
to $K_0 (C^* (G)\hat{\otimes} C(T^1))$.
Consider the generator $d^\prime$ of $H^1 (\ZZ;\ZZ)\subset H^* (\ZZ;\CC)$ and let $d:=\frac{\sqrt{-1}}{2\pi} d^\prime\in
H^* (\ZZ;\CC)$. We know that $H^* (\ZZ;\CC)$ can be identified with $H^*_{\ZZ} (E\ZZ;\CC)$ and that $E\ZZ=\RR$;
we denote by $\Xi: H^* (\ZZ;\CC)\to H^*_{\ZZ} (\RR;\CC)=H^1 (S^1)$ this isomorphism.
Consider $\underline{E}G \times E\ZZ\equiv \underline{E}G \times \RR\equiv G/K\times \RR$.
To $\alpha\in H^{{\rm odd}}_{{\rm diff}} (G)\equiv H^{{\rm odd}}_{{\rm inv}} (\underline{E}G) \equiv
H^{{\rm odd}}_{{\rm inv}} (G/K)$ we associate $$\beta:= \alpha\otimes \Xi (d)\in H^{{\rm odd}}_{{\rm inv}} (G/K)\otimes H^1_{\ZZ}(\RR)=
 H^{{\rm odd}}_{{\rm inv}} (G/K)\otimes H^1 (S^1).$$
Now, on the one hand, we have natural homomorphisms
$$\Psi_{G\times\ZZ} : H^{{\rm odd}}_{{\rm inv}} (G/K)\otimes H^1 (S^1)\to HC^{{\rm even}} (C_c^\infty (G)\hat{\otimes} C^\infty (S^1))$$
and
$$\Psi_{M\times\RR} : H^{{\rm odd}}_{{\rm inv}} (G/K)\otimes H^1 (S^1)\to HC^{{\rm even}} (\mathcal{A}^c_{G\times\ZZ} (M\times \RR))$$
where we remark that $\mathcal{A}^c_{G\times\ZZ} (M\times \RR)=\mathcal{A}^c_{G}(M)\hat{\otimes} \mathcal{A}^c_{\ZZ} (\RR)$
and also that $\mathcal{A}^c_{G\times\ZZ} (M\times \RR)=C^*_c (M\times\RR)^{G\times\ZZ}$.
On the other hand the classifying map $\psi_M$ and the classifying map for the $\ZZ$-action on $\RR$ give together a smooth
$(G\times\ZZ)$-equivariant map $\psi_{M\times \RR} : M\times \RR \to G/K \times \RR$. We can apply the Pflaum-Posthuma-Tang
index theorem and obtain, for the signature operator,
$$\left<\Ind_{C^*_c(M\times\RR)^{G\times\ZZ}} (D_{M\times \RR}), \Psi_{M\times \RR} (\beta)\right>= \int_G \int_{S^1} \chi_M L(M\times\RR)
\psi^*_M (\alpha)\wedge \Xi(d)=\int_G \chi_M L(M)
\psi^*_M (\alpha)=\sigma (M,\alpha)\,.$$
If $G$ satisfies (RD), then this  formula remains true for the $C^*(M\times\RR)^{G\times\ZZ}$ index, because $\mathcal{S}^\infty_G (M)
\hat{\otimes} \mathcal{S}_\ZZ (\RR)$, with $\mathcal{S}_\ZZ (\RR)$ denoting the smooth $\ZZ$-invariant kernels of $\RR\times\RR$
of rapid polynomial decay, is a dense holomorphically closed subalgebra of $C^*(M\times\RR)^{G\times\ZZ}$ to which 
the pairing with $\Psi_{M\times \RR} (\beta)$ extends.
Consequently
$$\left<\Ind_{C^*(G)\hat{\otimes} C(S^1)} (D_{M\times \RR}), \Psi_{G\times \ZZ} (\beta)\right>=\sigma (M,\alpha)\,.$$
Now, if  $M$ and $N$ are G-homotopy equivalent, then $M\times \RR$ and $N\times \RR$
are $G\times\ZZ$ homotopy equivalent. Hence the corresponding signature
index classes in $K_0 (C^* (G)\hat{\otimes} C(T^1))$ are equal; thus
$$\left<\Ind_{C^*(G)\hat{\otimes} C(S^1)} (D_{M\times \RR}), \Psi_{G\times \ZZ} (\beta)\right>=
\left<\Ind_{C^*(G)\hat{\otimes} C(S^1)} (D_{N\times \RR}), \Psi_{G\times \ZZ} (\beta)\right>$$
This gives us $$\sigma(M,\alpha)=\sigma(N,\alpha)\,.$$ which is what we wanted to prove in odd dimension.
\end{proof}

\subsection{Higher $\widehat{A}$-genera and  G-metrics of positive scalar curvature}
Let $S$ be  compact smooth manifold with an action of a compact Lie group $K$. In general,
the existence of a $K$-invariant metric of positive scalar curvature on $S$ is a more refined property than the
existence of a positive scalar curvature metric on $S$; indeed, as shown by Berard-B\'ergery in 
 \cite{bb-psc}, averaging a positive scalar curvature metric on $S$ might destroy 
 the positivity of the  scalar curvature. For sufficient conditions on $K$ and on $S$ ensuring 
 the existence of such metrics see  \cite{lawson-yau-psc,hanke-symmetry}.

If $M$ is a $G$-proper manifold we can try to built a $G$-invariant positive scalar curvature 
metric on $M$ through a $K$-invariant positive scalar curvature  metric on the slice $S$. This is precisely what is
achieved in \cite{GMW}:

\begin{theorem} [Guo-Mathai-Wang, c.f.\ \cite{GMW}]
Let $G$ be an almost connected Lie group and let $K$ be a maximal compact subgroup of $G$. If $S$ is a compact manifold with a $K$-invariant metric of positive scalar curvature, then the $G$-proper manifold 
$G\times_K S$ admits a $G$-invariant metric of positive scalar curvature.
\end{theorem}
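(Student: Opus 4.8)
My plan is to realise the desired metric as a connection metric on the bundle $\pi\colon E:=G\times_K S\to G/K$, with the fibre shrunk so that its positive scalar curvature dominates all curvature corrections. First I would fix an $\mathrm{Ad}_K$-invariant inner product on $\mathfrak g$; such an inner product exists because $K$ is compact and $\mathfrak k\subset\mathfrak g$ is $\mathrm{Ad}_K$-invariant, and it produces an $\mathrm{Ad}_K$-invariant splitting $\mathfrak g=\mathfrak p\oplus\mathfrak k$. This yields simultaneously a $G$-invariant metric $g_{G/K}$ on $G/K\cong\exp(\mathfrak p)$ and a $G$-invariant principal connection on the principal $K$-bundle $G\to G/K$, hence a $G$-invariant horizontal distribution $H\subset TE$ complementary to the vertical bundle $V$ (the tangent bundle to the fibres of $\pi$). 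For each $\delta>0$ I define a metric $g_\delta$ on $E$ by declaring $H\perp V$, setting $g_\delta|_{H}:=\pi^*g_{G/K}$, and letting $g_\delta|_{V}$ be, fibrewise, the metric $\delta^2 g_S$; it is precisely the $K$-invariance of $g_S$ that makes this fibrewise assignment well defined on the associated bundle $G\times_K S$ and $G$-invariant. Thus $g_\delta$ is $G$-invariant, $\pi\colon(E,g_\delta)\to(G/K,g_{G/K})$ is a Riemannian submersion all of whose fibres are isometric to $(S,\delta^2 g_S)$, and the fibres are \emph{totally geodesic}, by Vilms' characterisation of Riemannian submersions with totally geodesic fibres, since the structure group $K$ acts on the fibre $(S,g_S)$ by isometries.

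Next I would invoke O'Neill's formula for the scalar curvature of a Riemannian submersion with totally geodesic fibres, which in our situation has no mean-curvature contributions and reads
\[
\scal(g_\delta)=\pi^*\scal(g_{G/K})+\scal^{\mathrm{fib}}(g_\delta)-|A_\delta|^2,
\]
where $\scal^{\mathrm{fib}}(g_\delta)$ is the function on $E$ sending $p$ to the scalar curvature at $p$ of the fibre through $p$ with its induced metric, and $A_\delta$ is the O'Neill integrability tensor of $H$ for $g_\delta$. Since all fibres are isometric to $(S,\delta^2 g_S)$, the middle term equals $\delta^{-2}$ times the fibrewise scalar curvature of $g_1$. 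For the last term, the point is that for horizontal vector fields $X,Y$ one has $A_\delta(X,Y)=\tfrac12\,\mathrm{vert}[X,Y]$, which depends only on the distribution $H$ and not on the fibre metric; since $g_\delta$ agrees with $g_1$ on $H$ and equals $\delta^2 g_1$ on $V$, the vectors $A_\delta(e_i,e_j)$ for a fixed $g_\delta$-orthonormal horizontal frame are unchanged while each of their squared norms picks up a factor $\delta^2$, whence $|A_\delta|^2=\delta^2|A_1|^2$ with $A_1$ fixed.

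Finally I would estimate the three terms uniformly on $E$. Since $S$ is compact with $\scal(g_S)>0$ everywhere, there is $c_0>0$ with $\scal^{\mathrm{fib}}(g_1)\ge c_0$. Since $g_{G/K}$ is $G$-invariant on the homogeneous space $G/K$, its scalar curvature is constant, so $\pi^*\scal(g_{G/K})\ge -c_1$ for some $c_1\ge 0$. And $|A_1|^2$ is a $G$-invariant function on $E$, hence descends to the orbit space $E/G\cong S/K$, which is compact because $S$ is; therefore $|A_1|^2\le c_2$ for some $c_2\ge 0$. Combining,
\[
\scal(g_\delta)\ge -c_1+\delta^{-2}c_0-\delta^2 c_2
\]
everywhere on $E$, and the right-hand side is positive for all sufficiently small $\delta>0$. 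For such a $\delta$, $g_\delta$ is the required $G$-invariant metric of positive scalar curvature on $G\times_K S$.

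The only genuinely delicate step is the bundle-geometric set-up: verifying that the connection metric $g_\delta$ really has totally geodesic fibres, so that O'Neill's formula reduces to the three-term expression above, and correctly tracking the power of $\delta$ in the integrability term. Once this bookkeeping is in place, positivity is immediate from the compactness of $S$ and of the orbit space $S/K$.
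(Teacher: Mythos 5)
Your proof is correct: the connection-metric (canonical variation) construction on $G\times_K S\to G/K$, the O'Neill formula with totally geodesic fibres, the $\delta^2$-scaling of the integrability term, and the uniform bounds via compactness of $S$ and of $S/K$ all check out. The paper itself states this theorem without proof, citing Guo--Mathai--Wang, and your shrinking-the-fibre argument is essentially the standard one used in that reference, so there is nothing further to reconcile.
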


\noindent
This result shows that the space of positive scalar curvature $G$-metrics on a $G$-proper manifold
can be non-empty.

\smallskip
\noindent
We can ask for numerical obstructions to the existence of a positive scalar curvature $G$-metric.
Assume that $M$ has a G-equivariant spin structure and let $\eth$ be the associated
spin-Dirac operator. Then one can show, see again \cite{GMW}, that
\begin{equation}\label{vanishing-psc}
\Ind_{C^*_r (G)} (\eth)=0 \quad\text{in}\quad K_* (C^*_r G)\,.
\end{equation}

\noindent
The following result was stated as item (ii) in the main Theorem,
Theorem \ref{theo:main}, in the Introduction:

\begin{theorem}
Let $G$ be a Lie group satisfying the properties stated in the introduction: $|\pi_0 (G)|<\infty$, (RD) and $\underline{E}G$ of non-positive curvature.
Let $M$ be a $G$-proper manifold admitting a $G$-equivariant spin structure.
Let us identify $H^\bullet_{{\rm diff}}(G)$ and 
$H^\bullet_{{\rm inv}} (G/K)\equiv H^\bullet_{{\rm inv}}(\underline{E}G)$ via the van Est isomoprhism. 
If $M$ admits a $G$-invariant metric of positive scalar curvature, then 
$$\widehat{A}(M,\alpha):=\int_M  \chi_M (m) \,\widehat{A}(M)\wedge \psi_M^* \alpha=0$$
 for each $\alpha\in H^\bullet_{{\rm inv}}(\underline{E}G)$.
\end{theorem}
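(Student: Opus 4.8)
The plan is to read off the vanishing from the higher $C^*_r(G)$-index formula \eqref{indexG=index-ter} applied to the spin Dirac operator, combined with the Guo--Mathai--Wang vanishing \eqref{vanishing-psc}; odd dimensions are then reduced to the even case by the same suspension argument already used for the higher signatures. First I would treat the case $\dim M$ even. Take $D=\eth$, the spin Dirac operator of the given $G$-equivariant spin structure and $G$-invariant metric; as there is no auxiliary twisting bundle, ${\rm AS}(M)=\widehat{A}(M)$, and \eqref{indexG=index-ter} together with $\Phi_M(\alpha)=\psi_M^*\alpha$ gives
\[
\Ind_{C^*_r(G),\alpha}(\eth)=\int_M\chi_M(m)\,\widehat{A}(M)\wedge\psi_M^*\alpha=\widehat{A}(M,\alpha).
\]
By construction the left-hand side is the value of the extended pairing \eqref{pairing-G-ter} on the $K$-theory class $\Ind_{C^*_r(G)}(\eth)$ and the cyclic cocycle $\Psi_G(\alpha)$; since \eqref{vanishing-psc} asserts that this $K$-theory class already vanishes, we conclude $\widehat{A}(M,\alpha)=0$ for every $\alpha$.

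For $\dim M$ odd I would suspend. Pass to the even-dimensional $(G\times\ZZ)$-proper cocompact manifold $M\times\RR$, with $\ZZ$ acting on $\RR$ by translations and with the complete product metric $g_M+dt^2$, whose quotient is $M/G\times S^1$. The $G$-equivariant spin structure on $M$ and the translation-invariant one on $\RR$ combine to a $(G\times\ZZ)$-equivariant spin structure on $M\times\RR$. Moreover, as $M/G$ is compact we have $\scal_{g_M}\geq\kappa_0>0$ for some constant, hence $\scal_{g_M+dt^2}=\scal_{g_M}\geq\kappa_0$, so $M\times\RR$ carries a complete $(G\times\ZZ)$-invariant metric of uniformly positive scalar curvature. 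The Bochner--Lichnerowicz identity then forces $\eth_{M\times\RR}$ to be invertible, whence its index class vanishes: $\Ind_{C^*_r(G\times\ZZ)}(\eth_{M\times\RR})=0$ in $K_0(C^*_r(G\times\ZZ))\cong K_0(C^*(G)\,\hat{\otimes}\,C(T^1))$.

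I would then conclude exactly as in the signature argument. Write $d\in H^1(\ZZ;\CC)$ for the normalized generator, $\Xi(d)\in H^1(S^1)$ for its image, and set $\beta:=\alpha\otimes\Xi(d)$; assemble the classifying maps into $\psi_{M\times\RR}:M\times\RR\to G/K\times\RR$. Applying the Pflaum--Posthuma--Tang index theorem to $\eth_{M\times\RR}$, using that the $\RR$-factor is flat so that $\widehat{A}(M\times\RR)$ pulls back from $\widehat{A}(M)$, together with $\int_{S^1}\Xi(d)=1$, gives
\[
\left<\Ind_{C^*_c(M\times\RR)^{G\times\ZZ}}(\eth_{M\times\RR}),\Psi_{M\times\RR}(\beta)\right>=\int_M\int_{S^1}\chi_M\,\widehat{A}(M)\wedge\psi_M^*\alpha\wedge\Xi(d)=\widehat{A}(M,\alpha).
\]
As in that argument the same identity persists for the $C^*(M\times\RR)^{G\times\ZZ}$-index, the pairing with $\Psi_{M\times\RR}(\beta)$ extending to the dense holomorphically closed subalgebra $\mathcal{S}^\infty_G(M)\,\hat{\otimes}\,\mathcal{S}_\ZZ(\RR)$; since the index class on the left has just been shown to vanish, we obtain $\widehat{A}(M,\alpha)=0$ in the odd-dimensional case as well.

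The step I expect to be the main obstacle is the vanishing of the index class on $M\times\RR$ in the odd-dimensional case: the theorem of Guo--Mathai--Wang as quoted is stated for almost connected groups, while $G\times\ZZ$ has infinitely many components, so one cannot literally invoke it for $M\times\RR$. The resolution is that the vanishing of the $C^*$-index class in the presence of a complete, cocompact, invariant metric of uniformly positive scalar curvature is a consequence solely of the Bochner--Lichnerowicz identity, which yields invertibility of $\eth_{M\times\RR}$ in a dense holomorphically closed subalgebra of $C^*(M\times\RR)^{G\times\ZZ}$ and hence triviality of its index class; this argument is insensitive to the component structure of the acting group. The remaining points --- that suspension preserves the spin condition, that $\widehat{A}$ of the product reduces to the pullback of $\widehat{A}(M)$, and the bookkeeping with $\Xi(d)$ and the cut-off functions --- are routine and identical to the corresponding steps in the higher-signature argument already carried out.
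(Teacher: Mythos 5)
Your proof is correct and follows essentially the same route as the paper: the even case is read off from the $C^*$-index formula \eqref{indexG=index-ter} together with the Guo--Mathai--Wang vanishing \eqref{vanishing-psc}, and the odd case is handled by the same suspension to $M\times\RR$ with the product metric $g_M+dt^2$, repeating the bookkeeping of the signature argument. Your extra remark on why the vanishing of $\Ind_{C^*_r(G\times\ZZ)}(\eth_{M\times\RR})$ still holds although $G\times\ZZ$ is not almost connected (Lichnerowicz invertibility rather than a literal citation of Guo--Mathai--Wang) is a sensible justification of a point the paper passes over silently with ``the analogue of \eqref{vanishing-psc} holds''.
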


\begin{proof}
The even dimensional case follows directly from  our $C^*$-index formula and from 
\eqref{vanishing-psc}. In the odd dimensional case we argue by suspension, as we did
for the signature operator. It suffices to observe  that if 
$M$ is an odd dimensional   $G$-proper manifold admitting a $G$-equivariant spin structure
 and a $G$-invariant metric of positive scalar curvature $g_M$, then 
 $M\times \RR$ is  an  even dimensional $(G\times\ZZ)$-proper manifold 
 with a $(G\times\ZZ)$-equivariant spin structure and with 
 a  $(G\times\ZZ)$-invariant metric
$g_M+dt^2$ which is of positive scalar curvature too. Consequently, the analogue of 
\eqref{vanishing-psc} holds for the spin Dirac operator on $M\times\RR$ and so,
arguing as for the signature operator, we finally obtain
that $\widehat{A}(M,\alpha):=\int_M  \chi_M (m) \,\widehat{A}(M)\wedge \psi_M^* \alpha=0$
as required.
\end{proof}

{\small \bibliographystyle{plain}
\bibliography{rel-coverings}
}

\end{document}